\tikzset{hide on/.code={\only<#1>{\color{fg!0}}}}
\tikzset{hide on/.code={\only<#1>{\color{fg!0}}}}
\newtheorem{thm}{Theorem}%[section]
\newtheorem{lem}[thm]{Lemma}
\newtheorem{claim}{Claim}
\newtheorem{fact}[thm]{Fact}
\newtheorem{problem}[thm]{Problem}
\newtheorem{question}[thm]{Question}
\theoremstyle{remark}
\newtheorem*{rem}{Remark}
\theoremstyle{definition}
\newtheorem{defn}[thm]{Definition}
\theoremstyle{remark}
\newcommand{\al}{\alpha}
\newcommand{\om}{\omega}
\newcommand{\sse}{\subseteq}
\newcommand{\contains}{\supseteq}
\newcommand{\forces}{\Vdash}
\DeclareMathOperator{\dom}{dom}
\DeclareMathOperator{\stem}{stem}
\DeclareMathOperator{\spl}{spl}
\newcommand{\re}{\restriction}
\newcommand{\bP}{\mathbb{P}}
\newcommand{\bD}{\mathbb{D}}
\newcommand{\bQ}{\mathbb{Q}}
\newcommand{\bT}{\mathbb{T}}
\newcommand{\bS}{\mathbb{S}}
\newcommand{\G}{\mathrm{G}}
\newcommand{\F}{\mathrm{F}}
\newcommand{\R}{\mathrm{R}}
\newcommand{\ra}{\rightarrow}
\newcommand{\lgl}{\langle}
\newcommand{\rgl}{\rangle}
\newcommand{\Nesetril}{Ne{\v{s}}et{\v{r}}il}
\newcommand{\Rodl}{R{\"{o}}dl}
\newcommand{\Erdos}{Erd{\H{o}}s}
\newcommand{\Fraisse}{Fra{\"{i}}ss{\'{e}}}
\newcommand{\Lauchli}{L{\"{a}}uchli}
\newcommand{\Dzamonja}{D{\v{z}}amonja}
\newcommand{\noprint}[1]{\relax}
\title[Forcing in Ramsey Theory]{Forcing in Ramsey Theory}
\author{Natasha Dobrinen}
\address{University of Denver\\
Department of Mathematics, 2280 S Vine St, Denver, CO 80208, USA}
\email{natasha.dobrinen@du.edu}
\thanks{The author was partially supported by  National Science Foundation Grant DMS-1600781}
\subjclass[2010]{05C15, 03C15, 03E02, 03E05,  03E75, 05C05, 03E45}
\begin{document}

\maketitle

\begin{abstract}
Ramsey theory and forcing have a symbiotic relationship.
At the RIMS Symposium on Infinite Combinatorics and Forcing Theory in 2016, the author gave three tutorials on {\em Ramsey theory in forcing}.
The
first two tutorials concentrated on forcings which contain dense subsets forming topological Ramsey spaces.
These forcings  motivated the development of new Ramsey theory, which then was applied  to the generic ultrafilters
to obtain the precise structure  Rudin-Keisler and Tukey orders below such ultrafilters.
The content of the first two tutorials has appeared in a previous paper \cite{DobrinenSEALS17}.
The third tutorial concentrated on uses  of forcing to prove Ramsey theorems for trees which are applied to determine big Ramsey degrees of homogeneous relational structures.
This is the focus of this paper.

\end{abstract}

%%%%%%%%%%%%%%%
%%%%%%%%%%%%%%%
%%%%%%%%%%%%%%%
%%%%%%%%%%%%%%%

\section{Overview of Tutorial}

Ramsey theory and forcing are deeply interconnected in a multitude of various ways.
At the RIMS Conference on Infinite Combinatorics and Forcing Theory,
the author gave a series of three tutorials on {\em Ramsey theory in forcing}.
These tutorials focused  on  the following implications between forcing and Ramsey theory:
\begin{enumerate}
\item
Forcings adding ultrafilters satisfying weak partition relations, which in turn motivate new topological Ramsey spaces and canonical equivalence relations on barriers;
\item
Applications  of Ramsey theory to
obtain the  precise  Rudin-Keisler and Tukey structures below these forced ultrafilters;
\item
Ramsey theory motivating new forcings and associated ultrafilters;
\item
Forcing to obtain new Ramsey theorems for trees;
\item
Applications of new Ramsey theorems for trees to obtain Ramsey theorems for homogeneous relational structures.
\end{enumerate}

The first two tutorials focused on areas (1) - (3).
We presented work from
 \cite{Dobrinen/Todorcevic14}, \cite{Dobrinen/Todorcevic15} \cite{Dobrinen/Mijares/Trujillo14},
 \cite{DobrinenJSL15},
\cite{DobrinenJML16},
and \cite{DobrinenCreaturetRs15},
in which
dense subsets of  forcings   generating  ultrafilters satisfying some weak partition properties
were shown to
 form topological Ramsey spaces.
Having obtained
the canonical  equivalence relations on fronts  for these topological Ramsey spaces,
they may be applied to obtain the precise
initial
Rudin-Keisler and Tukey  structures.
An exposition of this work has already appeared in \cite{DobrinenSEALS17}.

The third tutorial concentrated on
areas (4) and (5).
We focused  particularly on the Halpern-\Lauchli\ Theorem and variations for strong trees.
Extensions and analogues of this theorem have found applications in homogeneous relational structures.
The majority of this article will concentrate
on  Ramsey theorems for trees due to (in historical order)  Halpern-\Lauchli\ \cite{Halpern/Lauchli66}; Milliken \cite{Milliken81};   Shelah \cite{Shelah91};
\Dzamonja, Larson, and Mitchell \cite{Dzamonja/Larson/MitchellQ09}; Dobrinen and Hathaway \cite{Dobrinen/Hathaway16}; Dobrinen \cite{DobrinenH_317}; and very recently Zhang \cite{Zhang17}.
These theorems have important applications to  finding
big Ramsey degrees for homogeneous structures.

We say that an infinite structure $\mathcal{S}$ has {\em finite big Ramsey degrees} if for each finite substructure $\F$ of $\mathcal{S}$
there is some finite number $n(\F)$ such that for any coloring of all copies of $\F$ in $\mathcal{S}$ into finitely many colors, there is a substructure $\mathcal{S}'$ if $\mathcal{S}$ which is isomorphic to $\mathcal{S}$ and such that all copies of $\F$ in $\mathcal{S}'$ take no more than $n(\F)$ colors.
The Halpern-\Lauchli\ and Milliken Theorems, and other related Ramsey theorems on trees, have been instrumental in  proving  finite big Ramsey degrees for certain homogeneous relational structures.
Section \ref{sec.HL}
contains  Harrington's  forcing proof of the Halpern-\Lauchli\ Theorem.
This is then applied to obtain Milliken's Theorem for strong subtrees.
Applications of Milliken's theorem to
obtain finite big Ramsey degrees are shown in Section \ref{sec.Sauer}.
There, we provide the
main ideas of how Sauer applied Milliken's Theorem to prove that
the random graph on countably many vertices
has finite big Ramsey degrees in \cite{Sauer06}.
Then we briefly cover applications to Devlin's work in \cite{DevlinThesis} on
finite subsets of the rationals,
 and Laver's work in \cite{Laver84} on
finite products of the rationals.

In another vein, proving whether or not homogeneous relational structures omitting copies of a certain finite structure have finite big Ramsey degrees has been an elusive endeavor until recently.
In \cite{DobrinenH_317}, the author used forcing to prove the needed analogues of Milliken's Theorem and applied them to prove  the universal triangle-free graph
has finite big Ramsey degrees.
The main ideas in that paper are covered in Section \ref{sec.H_3}.

The final Section \ref{sec.measurable}
addresses analogues of the Halpern-\Lauchli\ Theorem for trees of uncountable height.
The first such theorem, due to Shelah \cite{Shelah91} (strengthened in \cite{Dzamonja/Larson/MitchellQ09})
considers finite antichains in one tree of measurable height.
This
was applied by
\Dzamonja, Larson, and Mitchell  to prove the consistency of a measurable cardinal $\kappa$
and the analogue of
Devlin's result for the
$\kappa$-rationals in \cite{Dzamonja/Larson/MitchellQ09};
and  that the $\kappa$-Rado graph
 has finite big Ramsey degrees in \cite{Dzamonja/Larson/MitchellRado09}.
Recent work of Hathaway and the author in \cite{Dobrinen/Hathaway16} considers more than one tree and implications for  various uncountable cardinals.
We conclude the paper with very recent results of  Zhang in \cite{Zhang17} obtaining the analogue of Laver's result at a measurable cardinal.

The existence of finite big Ramsey degrees has been of interest for some time to those studying homogeneous structures.
In addition to those results considered in this paper,
big Ramsey degrees have been investigated in the context of ultrametric spaces in \cite{NVT08}.
A recent connection between finite big Ramsey degrees and topological dynamics
 has been made by Zucker in \cite{Zucker17}.
Any future  progress on finite big Ramsey degrees  will have implications for topological dynamics.

The author would like to thank Timothy Trujillo for creating most of the diagrams used in the tutorials, and the most complex ones included here.

%%%%%%%%%%%%%%%
%%%%%%%%%%%%%%%
%%%%%%%%%%%%%%%
%%%%%%%%%%%%%%%

%%%%%%%%%%%%%%%
%%%%%%%%%%%%%%%%
%%%%%%%%%%%%%%%%%%
%%%%%%%%%%%%%%%%%

\section{The Halpern-\Lauchli\ and Milliken Theorems}\label{sec.HL}

Ramsey theory on trees is a powerful tool for investigations into several branches of mathematics.
The Halpern-\Lauchli\ Theorem was originally proved as a main technical lemma enabling a later  proof
of Halpern and Lev\'{y}
 that the Boolean prime ideal theorem is strictly weaker than the Axiom of Choice, assuming the ZF axioms (see \cite{Halpern/Levy71}).
Many variations of this theorem have been proved.
We will
 concentrate on
 the strong tree version of the Halpern-\Lauchli\ Theorem, referring the interested reader   to Chapter 3 in \cite{TodorcevicBK10} for a compendium of other variants.
An extension due to  Milliken, which is a Ramsey theorem on strong trees, has found numerous applications finding precise structural properties  of homogeneous structures, such as the Rado graph and the rational numbers, in terms of Ramsey degrees for colorings of finite substructures.
This will be presented in the second part of this section.

Several proofs of the Halpern-\Lauchli\ Theorem are available in the literature.
A proof using the technique of forcing was discovered by Harrington, and is regarded as providing the most insight.
It was  known to a handful of  set theorists for several decades.
His proof uses a cardinal which satisfies the following  partition relation for colorings of
subsets of size $2d$
 into countably many colors, and uses Cohen forcing to add  many paths through each of the trees.

\begin{defn}\label{defn.arrownotation}
Given cardinals $d,\sigma,\kappa,\lambda$,
\begin{equation}
\lambda\ra(\kappa)^d_{\sigma}
\end{equation}
means that for each coloring of $[\lambda]^d$ into $\sigma$ many colors,
there is a subset $X$ of $\lambda$ such that $|X|=\kappa$ and all members of $[X]^d$ have the same color.
\end{defn}

The following is  a ZFC result guaranteeing cardinals large enough to have the Ramsey property for colorings into infinitely many colors.

\begin{thm}[\Erdos-Rado]\label{thm.ER}
For $r<\om$ and $\mu$ an infinite cardinal,
$$
\beth_r(\mu)^+\ra(\mu^+)_{\mu}^{r+1}.
$$
\end{thm}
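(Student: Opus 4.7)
The proof goes by induction on $r$. The base case $r = 0$ is pure pigeonhole: any $\mu$-coloring of $\mu^+$ admits a monochromatic subset of size $\mu^+$.

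For the inductive step, assume the statement for $r$ and fix a coloring $c : [\lambda]^{r+2} \to \mu$ where $\lambda := \beth_{r+1}(\mu)^+$. Set $\kappa := \beth_r(\mu)$, so $\lambda = (2^\kappa)^+$. The plan is first to extract an \emph{end-homogeneous} subset $H \subseteq \lambda$ of size $\kappa^+$, meaning that $c(s \cup \{y\}) = c(s \cup \{z\})$ whenever $s \in [H]^{r+1}$ and $y, z \in H$ both lie above $\max s$; then to apply the induction hypothesis to the induced $\mu$-coloring $c'(s) := c(s \cup \{y\})$ (any $y > \max s$ in $H$) on $[H]^{r+1}$. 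Since $|H| = \beth_r(\mu)^+$, the inductive case produces a $c'$-homogeneous $Y \subseteq H$ with $|Y| = \mu^+$, and by end-homogeneity $Y$ is automatically $c$-monochromatic.

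To construct $H$, I would run a transfinite recursion of length $\kappa^+$ guided by an external witness $x^* \in \lambda$ to be chosen after the fact. Along the way I maintain a strictly increasing sequence $\langle x_\alpha : \alpha < \kappa^+ \rangle$ in $\lambda$, a decreasing family $\langle B_\alpha \rangle$ of subsets of $\lambda$ of size $\lambda$ with $x^* \in B_\alpha$, and a coherent sequence of types $\chi_\alpha : [X_\alpha]^{r+1} \to \mu$ (where $X_\alpha := \{x_\beta : \beta < \alpha\}$) such that every $y \in B_\alpha$ satisfies $c(s \cup \{y\}) = \chi_\alpha(s)$ for all $s \in [X_\alpha]^{r+1}$ with $\max s < y$. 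At a successor $\alpha + 1$, I would let $x_\alpha := \min B_\alpha$ above all prior $x_\beta$, partition $B_\alpha \setminus \{x_\alpha\}$ into the at most $\mu^\kappa = 2^\kappa$ type-extensions of $\chi_\alpha$ to $[X_{\alpha+1}]^{r+1}$, and let $B_{\alpha+1}$ be the class containing $x^*$; at a limit $\alpha$, I would set $\chi_\alpha := \bigcup_{\beta < \alpha} \chi_\beta$ and $B_\alpha := \bigcap_{\beta < \alpha} B_\beta$, which still contains $x^*$.

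The main obstacle is ensuring $|B_\alpha| = \lambda$ throughout, i.e.\ that $x^*$ never falls into a \emph{small} (size $\le 2^\kappa$) class at any stage. At each single stage the union of small classes has size at most $2^\kappa \cdot 2^\kappa = 2^\kappa$, and the subtle point is that which classes are small depends on the construction, which depends on $x^*$. However, two choices of $x^*$ produce the same construction up to stage $\alpha$ exactly when they always lie in the same class, so the number of distinct constructions up to any stage $\alpha \le \kappa^+$ is bounded by $(2^\kappa)^{|\alpha|} \le (2^\kappa)^\kappa = 2^\kappa$. Summing the bad-witness count over all $\kappa^+$ stages and all $\le 2^\kappa$ possible partial constructions gives a total of at most $\kappa^+ \cdot 2^\kappa \cdot 2^\kappa = 2^\kappa < \lambda$ possibly-bad $x^*$. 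Since $\lambda = (2^\kappa)^+$, a good $x^*$ exists (indeed, $\lambda$-many), and for any such $x^*$ the construction runs all the way to $\kappa^+$, delivering the required end-homogeneous $H = \{x_\alpha : \alpha < \kappa^+\}$.
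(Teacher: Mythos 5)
The paper does not actually prove this theorem: it is quoted as a classical ZFC result and used as a black box (only the instance $\kappa\ra(\aleph_1)^{2d}_{\aleph_0}$ for $\kappa=\beth_{2d}$ is invoked, in the proof of Theorem \ref{thm.matrixHL}). Your argument is the standard ramification proof of the \Erdos--Rado theorem, and it is essentially correct. The base case is right, the reduction of the successor step to producing an end-homogeneous $H$ of size $\beth_r(\mu)^+$ and then applying the inductive hypothesis to the induced coloring $c'$ is right, and the counting of bad witnesses works: two witnesses that have always fallen into the same class generate identical partial constructions, so the sets $B_\alpha$ arising at a fixed stage $\alpha$ from the various witnesses are pairwise equal or disjoint; there are at most $(2^\kappa)^{|\alpha|}\le 2^\kappa$ of them, and a witness first goes bad at stage $\alpha$ only by landing in one of the at most $2^\kappa$ classes of size at most $2^\kappa$ there, so summing over $\kappa^+\le 2^\kappa$ stages gives at most $2^\kappa<\lambda$ bad witnesses. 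Note that this bookkeeping also disposes of the one point you might have worried about at limit stages, namely that $\bigcap_{\beta<\alpha}B_\beta$ could a priori be small even though every $B_\beta$ has size $\lambda$: if it is small, then $x^*$ is simply bad at stage $\alpha$ and is already counted. Two small details to add in a full writeup: exclude from candidacy for $x^*$ the at most $\kappa^+\cdot 2^\kappa=2^\kappa$ elements that get consumed as some $x_\alpha$ in some possible run of the construction (otherwise $x^*$ could be deleted from $B_{\alpha+1}$ by being chosen as $x_\alpha$ itself), and observe that $x_\alpha$ can always be taken above all earlier $x_\beta$ because $\sup_{\beta<\alpha}x_\beta<\lambda$ by regularity of $\lambda$ while $|B_\alpha|=\lambda$.
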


The book
\cite{Farah/TodorcevicBK} of Farah and Todorcevic contains a forcing proof of the Halpern-\Lauchli\ Theorem.
The proof there is a
modified version of Harrington's original proof.
It uses a cardinal satisfying the weaker partition
 relation $\kappa\ra(\aleph_0)^d_{2}$,  which is satisfied by the cardinal
$\beth^+_{d-1}$.
This  is important if one is interested in obtaining the theorem  from weaker assumptions, and was
instrumental
in motivating  the main result in
  \cite{Dobrinen/Hathaway16} which is presented in Section \ref{sec.measurable}.
One could conceivably recover Harrington's original argument from Shelah's proof of the Halpern-\Lauchli\ Theorem at a measurable cardinal (see \cite{Shelah91}).
However, his proof is more complex than simply  lifting  Harrington's argument to a measurable cardinal, as he obtiains a stronger version, but only for one tree (see Section \ref{sec.measurable}).
Thus, we present here  the simplest version of Harrington's forcing proof, filling a hole in the literature at present.
This version was outlined to the author in 2011 by Richard Laver, and the author has filled in the gaps.

A {\em tree} on $\om^{<\om}$ is
a subset $T\sse \om^{\om}$  which is closed under meets.
Thus, in this article,  a tree is not necessarily closed  under initial segments.
We let
$\widehat{T}$ denote the set of all initial segments of members of $T$;
thus, $\widehat{T}=\{s\in\om^{<\om}:\exists t\in T(s\sse t)\}$.
Given any tree $T\sse\om^{<\om}$ and a node $t\in T$,
let $\spl_T(t)$ denote the set of all immediate successors of $t$ in $\widehat{T}$;
thus, $\spl_T(t)=\{u\in \widehat{T}: u\contains t$ and $|u|=|t|+1\}$.
Notice that the nodes in $\spl_T(t)$ are not necessarily nodes in $T$.
For a tree $T\sse 2^{<\om}$ and $n<\om$, let $T(n)$ denote $T\cap 2^n$; thus, $T(n)=\{t\in T:|t|=n\}$.
A set $X\sse T$ is a {\em level set} if all nodes in $X$ have the same length.
Thus, $X\sse T$ is a level set if $X\sse T(n)$ for some  $n<\om$.

Let $T\sse \om^{<\om}$ be a finitely branching tree with no terminal nodes such  that $\hat{T}=T$, and each node in $T$ splits into at least two immediate successors.
A subtree $S\sse T$ is an infinite
{\em strong subtree of $T$}  if there is an infinite set $L\sse\om$  of levels such that
\begin{enumerate}
\item
$S=\bigcup_{l\in L}\{s\in S:|s|=l\}$;
\item
for each node $s\in S$,
$s$ splits in $S$ if and only if $|s|\in L$;
\item
if $|s|\in L$, then  $\spl_S(s)=\spl_T(s)$.
\end{enumerate}
$S$ is a {\em finite strong subtree} of $T$ if there is a finite set of levels  $L$ such that (1) holds, and every non-maximal node in $S$ at a level in $L$ splits maximally in $T$.
See Figures 1 and 2 for examples of finite strong trees isomorphic to $2^{\le 2}$, as determined by the darkened nodes.

%%%%%%%%%%%%%%%%%%%%%%%%%%%%
%%%%%%%%%%%%%%%%%%%%%%%%%%%

\begin{figure}
\begin{tikzpicture}[grow'=up,scale=.6]
\tikzstyle{level 1}=[sibling distance=4in]
\tikzstyle{level 2}=[sibling distance=2in]
\tikzstyle{level 3}=[sibling distance=1in]
\tikzstyle{level 4}=[sibling distance=0.5in]
\tikzstyle{level 5}=[sibling distance=0.2in]
\node {} coordinate (t9)
child{coordinate (t0)
			child{coordinate (t00)
child{coordinate (t000)
child {coordinate(t0000)
child{coordinate(t00000) edge from parent[color=black] }
child{coordinate(t00001)}}
child {coordinate(t0001) edge from parent[color=black]
child {coordinate(t00010)}
child{coordinate(t00011)}}}
child{ coordinate(t001)
child{ coordinate(t0010)
child{ coordinate(t00100)}
child{ coordinate(t00101) edge from parent[color=black] }}
child{ coordinate(t0011) edge from parent[color=black]
child{ coordinate(t00110)}
child{ coordinate(t00111)}}}}
			child{ coordinate(t01)  edge from parent[color=black]
child{ coordinate(t010)
child{ coordinate(t0100)
child{ coordinate(t01000)}
child{ coordinate(t01001)}}
child{ coordinate(t0101)
child{ coordinate(t01010)}
child{ coordinate(t01011)}}}
child{ coordinate(t011)
child{ coordinate(t0110)
child{ coordinate(t01100)}
child{ coordinate(t01101)}}
child{ coordinate(t0111)
child { coordinate(t01110)}
child{ coordinate(t01111)}}}}}
		child{ coordinate(t1)
			child{ coordinate(t10)
child{ coordinate(t100)
child{ coordinate(t1000) edge from parent[color=black]
child{ coordinate(t10000)}
child{ coordinate(t10001)}}
child{ coordinate(t1001)
child{ coordinate(t10010)}
child{ coordinate(t10011) edge from parent[color=black] }}}
child{ coordinate(t101)
child{ coordinate(t1010) edge from parent[color=black]
child{ coordinate(t10100) }
child{ coordinate(t10101)}}
child{ coordinate(t1011)
child{ coordinate(t10110) edge from parent[color=black] }
child{ coordinate(t10111)}}}}
			child{  coordinate(t11)  edge from parent[color=black]
child{ coordinate(t110)
child{ coordinate(t1100)
child{ coordinate(t11000)}
child{ coordinate(t11001)}}
child{ coordinate(t1101)
child{ coordinate(t11010)}
child{ coordinate(t11011)}}}
child{  coordinate(t111)
child{  coordinate(t1110)
child{  coordinate(t11100)}
child{  coordinate(t11101)}}
child{  coordinate(t1111)
child{  coordinate(t11110)}
child{  coordinate(t11111)}}}} };

\node[left] at (t0) {$0$};
\node[left] at (t00) {$00$};
\node[left] at (t000) {$000$};
\node[left] at (t001) {$001$};
\node[left] at (t01) {$01$};
\node[left] at (t010) {$010$};
\node[left] at (t011) {$011$};
\node[right] at (t1) {$1$};
\node[right] at (t10) {$10$};
\node[right] at (t100) {$100$};
\node[right] at (t101) {$101$};
\node[right] at (t11) {$11$};
\node[right] at (t110) {$110$};
\node[right] at (t111) {$111$};

\node[circle, fill=black,inner sep=0pt, minimum size=6pt] at (t9) {};

\node[circle, fill=black,inner sep=0pt, minimum size=6pt] at (t00) {};
\node[circle, fill=black,inner sep=0pt, minimum size=6pt] at (t00001) {};
\node[circle, fill=black,inner sep=0pt, minimum size=6pt] at (t00100) {};
\node[circle, fill=black,inner sep=0pt, minimum size=6pt] at (t10010) {};
\node[circle, fill=black,inner sep=0pt, minimum size=6pt] at (t10) {};
\node[circle, fill=black,inner sep=0pt, minimum size=6pt] at (t10111) {};

\end{tikzpicture}
\caption{A strong tree isomorphic to $2^{\le 2}$}
\end{figure}
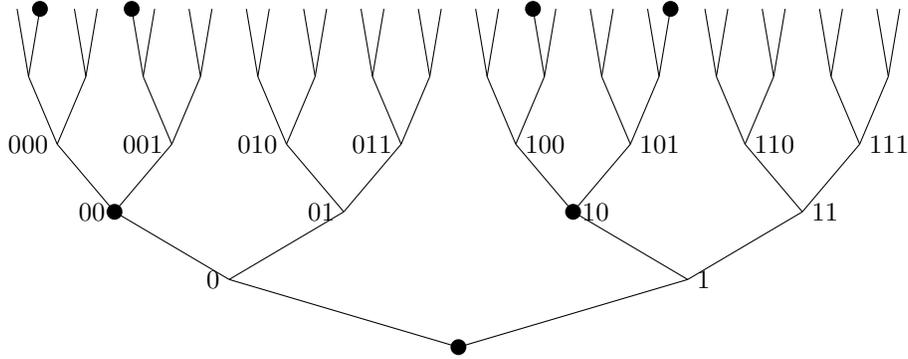

%%%%%%%%%%%%%%%%%%%%%%%%%%%%
%%%%%%%%%%%%%%%%%%%%%%%%%%%

\begin{figure}
\begin{tikzpicture}[grow'=up,scale=.6]
\tikzstyle{level 1}=[sibling distance=4in]
\tikzstyle{level 2}=[sibling distance=2in]
\tikzstyle{level 3}=[sibling distance=1in]
\tikzstyle{level 4}=[sibling distance=0.5in]
\tikzstyle{level 5}=[sibling distance=0.2in]
\node {} coordinate (t9)
child{coordinate (t0)
			child{coordinate (t00)
child{coordinate (t000) edge from parent[color=black]
child {coordinate(t0000)
child{coordinate(t00000)}
child{coordinate(t00001)}}
child {coordinate(t0001)
child {coordinate(t00010)}
child{coordinate(t00011)}}}
child{ coordinate(t001)
child{ coordinate(t0010)
child{ coordinate(t00100)}
child{ coordinate(t00101) edge from parent[color=black] }}
child{ coordinate(t0011)
child{ coordinate(t00110)}
child{ coordinate(t00111)edge from parent[color=black] }}}}
			child{ coordinate(t01)
child{ coordinate(t010) edge from parent[color=black]
child{ coordinate(t0100)
child{ coordinate(t01000)}
child{ coordinate(t01001)}}
child{ coordinate(t0101)
child{ coordinate(t01010)}
child{ coordinate(t01011)}}}
child{ coordinate(t011)
child{ coordinate(t0110)
child{ coordinate(t01100)}
child{ coordinate(t01101)edge from parent[color=black] }}
child{ coordinate(t0111)
child { coordinate(t01110) edge from parent[color=black] }
child{ coordinate(t01111)}}}}}
		child{ coordinate(t1)
			child{ coordinate(t10)
child{ coordinate(t100)
child{ coordinate(t1000)
child{ coordinate(t10000)}
child{ coordinate(t10001)}}
child{ coordinate(t1001)
child{ coordinate(t10010)}
child{ coordinate(t10011)}}}
child{ coordinate(t101)
child{ coordinate(t1010)
child{ coordinate(t10100)}
child{ coordinate(t10101)}}
child{ coordinate(t1011)
child{ coordinate(t10110)}
child{ coordinate(t10111)}}}}
			child{  coordinate(t11)
child{ coordinate(t110)
child{ coordinate(t1100)
child{ coordinate(t11000)}
child{ coordinate(t11001)}}
child{ coordinate(t1101)
child{ coordinate(t11010)}
child{ coordinate(t11011)}}}
child{  coordinate(t111)
child{  coordinate(t1110)
child{  coordinate(t11100)}
child{  coordinate(t11101)}}
child{  coordinate(t1111)
child{  coordinate(t11110)}
child{  coordinate(t11111)}}}} };

\node[left] at (t0) {$0$};
\node[left] at (t00) {$00$};
\node[left] at (t000) {$000$};
\node[left] at (t001) {$001$};
\node[left] at (t01) {$01$};
\node[left] at (t010) {$010$};
\node[left] at (t011) {$011$};
\node[right] at (t1) {$1$};
\node[right] at (t10) {$10$};
\node[right] at (t100) {$100$};
\node[right] at (t101) {$101$};
\node[right] at (t11) {$11$};
\node[right] at (t110) {$110$};
\node[right] at (t111) {$111$};

\node[circle, fill=black,inner sep=0pt, minimum size=6pt] at (t0) {};
\node[circle, fill=black,inner sep=0pt, minimum size=6pt] at (t001) {};
\node[circle, fill=black,inner sep=0pt, minimum size=6pt] at (t00100) {};
\node[circle, fill=black,inner sep=0pt, minimum size=6pt] at (t00110) {};
\node[circle, fill=black,inner sep=0pt, minimum size=6pt] at (t011) {};
\node[circle, fill=black,inner sep=0pt, minimum size=6pt] at (t01100) {};
\node[circle, fill=black,inner sep=0pt, minimum size=6pt] at (t01111) {};

\end{tikzpicture}
\caption{Another strong tree isomorphic to $2^{\le 2}$}
\end{figure}
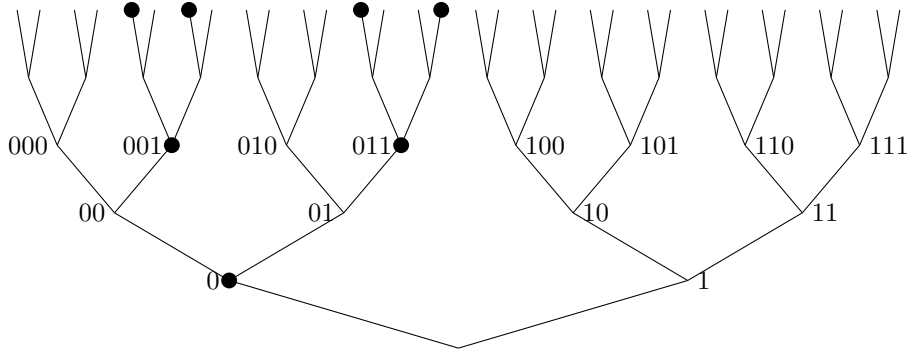

%%%%%%%%%%%%%%%%%%%%

We now present Harrington's proof of the Halpern-\Lauchli\ Theorem, as outlined by Laver   and filled in by the author.
Although the
  proof  uses the set-theoretic technique of forcing,
the whole  construction takes place  in the original model of ZFC, not some generic extension.
The forcing should be thought of as conducting an unbounded search for a finite object, namely the next level set  where homogeneity is attained.

\begin{thm}[Halpern-\Lauchli]\label{thm.matrixHL}
Let $1< d<\om$ and let $T_i\sse\om^{<\om}$ be finitely branching trees such that $\widehat{T}_i= T_i$.
Let
\begin{equation}
c:\bigcup_{n<\om}\prod_{i<d} T_i(n)\ra 2
\end{equation}
 be given.
Then there is an infinite set of levels $L\sse \om$ and strong subtrees $S_i\sse T_i$  each with branching nodes exactly at the levels in $L$
such that $c$ is monochromatic on
\begin{equation}
\bigcup_{n\in L}\prod_{i<d} S_i(n).
\end{equation}
\end{thm}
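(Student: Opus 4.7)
The plan is to execute Harrington's forcing argument. Fix a cardinal $\kappa$ satisfying a strong partition relation of the form $\kappa\ra(\aleph_0)^{2d}_{\om}$, which is available from Theorem~\ref{thm.ER} by taking, e.g., $\kappa=\beth_{2d-1}^+$. Introduce a Cohen-like forcing $\bP$ that adds $\kappa$ many generic branches through each $T_i$: a condition is a tuple $p=\lgl \ell_p,f_0^p,\ldots,f_{d-1}^p\rgl$ where $\ell_p<\om$ and each $f_i^p:F_i^p\to T_i(\ell_p)$ is a finite partial function with $F_i^p\in[\kappa]^{<\om}$, ordered by the natural end-extension. Let $\dot b_\al^i$ name the $\al$-th generic branch through $T_i$. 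The crucial point is that the whole construction will live in the ground model $V$; the forcing merely organizes an unbounded search for the next finite level of the desired strong subtrees.

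First I would show that for every $\vec\al=(\al_0,\ldots,\al_{d-1})\in\kappa^d$ there exist, in $V$, a condition $p_{\vec\al}\in\bP$, a color $\epsilon_{\vec\al}\in 2$, and an infinite set $D_{\vec\al}\sse\om$ such that
\begin{equation}
p_{\vec\al}\forces c\bigl(\dot b_{\al_0}^0(\check\ell),\ldots,\dot b_{\al_{d-1}}^{d-1}(\check\ell)\bigr)=\check\epsilon_{\vec\al}\ \text{for every}\ \ell\in D_{\vec\al}.
\end{equation}
This is because some extension of the trivial condition must force the $\bP$-name ``the set of levels with color $0$'' to be infinite, or else the analogous statement with color $1$; a standard density argument then extracts such $p_{\vec\al}$ and $D_{\vec\al}$.

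Next I would invoke the \Erdos--Rado theorem. The assignment $\vec\al\mapsto(p_{\vec\al},\epsilon_{\vec\al},D_{\vec\al})$ gives, in $V$, a coloring of $[\kappa]^{2d}$ (coding the $d$ indices of $\vec\al$, the indices occurring in the supports of $p_{\vec\al}$, and the ``types'' of the associated nodes inside the trees $T_i$) into at most countably many colors. The partition relation yields an uncountable homogeneous $H\sse\kappa$ on which $\epsilon_{\vec\al}$ is constantly some $\epsilon\in 2$, the sets $D_{\vec\al}$ share a common infinite tail $D$, and the conditions $p_{\vec\al}$ form a $\Delta$-system of common shape, so that any two of them whose index supports occupy matching relative positions in $H$ are pairwise compatible.

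Finally I would construct $L\sse D$ and the strong subtrees $S_i\sse T_i$ by recursion in $V$, alternately picking a new level $\ell_{k+1}\in L$ and extending each $S_i\re\ell_k$ up to level $\ell_{k+1}$ by choosing, for every current top node, a full set of its $T_i$-immediate successors witnessed by some compatible condition drawn from $\{p_{\vec\al}:\vec\al\in[H]^d\}$. The $\Delta$-system structure on $H$ permits a single common extension that simultaneously realizes a full splitting at every top node of every $S_i$, and any $d$-tuple $\lgl s_0,\ldots,s_{d-1}\rgl\in\prod_{i<d} S_i(\ell_{k+1})$ is then forced by that common extension of the relevant $p_{\vec\al}$ to satisfy $c(s_0,\ldots,s_{d-1})=\epsilon$. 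The main obstacle, as I see it, is precisely this coordination step: one must grow $d$ trees in parallel while preserving the strong-subtree condition $\spl_{S_i}(s)=\spl_{T_i}(s)$ at every chosen level, index each new splitting node by a fresh ordinal from $H$, and keep enough of $H$ unused to continue the recursion indefinitely. The strength of the partition relation was chosen precisely to absorb this combinatorial demand.
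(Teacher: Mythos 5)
Your overall strategy is the right one --- this is Harrington's argument, and the forcing, the \Erdos--Rado\ step, and the level-by-level recursion are all organized as in the paper's proof. But there is a genuine gap at the very first step, and it is not a detail: you assert that for each $\vec\al$ there is a single condition $p_{\vec\al}$ and an \emph{infinite ground-model set} $D_{\vec\al}\sse\om$ such that $p_{\vec\al}$ forces $c(\dot b_{\vec\al}\re\ell)=\epsilon_{\vec\al}$ for every $\ell\in D_{\vec\al}$. No such pair exists in general. A condition $p$ only decides the branches up to its height $l_p$, so for $\ell>l_p$ the level set $\dot b_{\vec\al}\re\ell$ is undecided and, since $c$ is an arbitrary ground-model coloring, different extensions of $p$ will force different values of $c(\dot b_{\vec\al}\re\ell)$. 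Hence any set of levels at which a fixed condition forces the color is contained in $\{0,\dots,l_p\}$ and is finite. The ``standard density argument'' you invoke produces, for each $n$, a \emph{different} extension deciding some good level above $n$; these cannot be amalgamated into one finite condition, so they do not yield a ground-model $D_{\vec\al}$.

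This is exactly the point of the ultrafilter name $\dot{\mathcal{U}}$ in the paper's proof: one arranges $p_{\vec\al}\forces$ ``$c(\dot b_{\vec\al}\re l)=\epsilon_{\vec\al}$ for $\dot{\mathcal{U}}$-many $l$,'' where the set of good levels remains a \emph{name}. The ultrafilter is then doing real work at the recursion step: for the finitely many $\vec\al\in\vec J$ relevant to building the next level, the common extension $q$ forces the intersection of the corresponding sets of good levels to lie in $\dot{\mathcal{U}}$, hence to be infinite, so some $r\le q$ decides a single level $l_m$ good for all of them simultaneously. Your substitute --- having \Erdos--Rado\ homogenize the sets $D_{\vec\al}$ to ``share a common infinite tail'' --- cannot repair this, both because the objects $D_{\vec\al}$ don't exist in the first place and because coding an arbitrary infinite subset of $\om$ into the color would require $2^{\aleph_0}$ colors rather than countably many. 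The rest of your outline (the $\Delta$-system/shape analysis giving pairwise compatibility, fixing the stems $t^*_i$, and indexing the new splitting nodes by fresh ordinals from disjoint blocks $K'_0<\dots<K'_{d-1}$ so that every tuple in $\prod_{i<d}S_i(\ell_{k+1})$ is realized by some $p_{\vec\al}$) matches the paper's argument in spirit and would go through once the first step is replaced by the ultrafilter device.
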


\begin{proof}
Let
$c:\bigcup_{n<\om}\prod_{i<d} T_i(n) \ra 2$
be given.
Let $\kappa=\beth_{2d}$.
The following forcing notion $\bP$  will  add $\kappa$ many paths through each $T_i$, $i\in d$.
$\bP$ is the set of conditions $p$ such that
$p$ is a  function
of the form
\begin{equation}
p: d\times\vec{\delta}_p\ra \bigcup_{i<d} T_i(l_p)
\end{equation}
where
\begin{enumerate}
\item[(i)]
$\vec{\delta}_p\in[\kappa]^{<\om}$;
\item[(ii)]
$l_p<\om$; and
\item [(iii)]
for each $i<d$,
 $\{p(i,\delta) : \delta\in  \vec{\delta}_p\}\sse  T_i( l_p)$.
\end{enumerate}
The partial  ordering on $\bP$ is simply inclusion:
$q\le p$ if and only if
$l_q\ge l_p$, $\vec\delta_q\contains \vec\delta_p$,
and for each $(i,\delta)\in d\times \vec\delta_p$,
$q(i,\delta)\contains p(i,\delta)$.

$\bP$ adds $\kappa$ branches through each of tree $T_i$, $i<d$.
For each $i<d$ and $\al<\kappa$,
let  $\dot{b}_{i,\al}$ denote the $\al$-th generic branch through $T_i$.
Thus,
\begin{equation}
\dot{b}_{i,\al}=\{\lgl p(i,\al),p\rgl :p\in \bP,\ \mathrm{and}\ (i,\al)\in\dom(p)\}.
\end{equation}
Note that for each $p\in \bP$ with $(i,\al)\in\dom(p)$, $p$ forces that $\dot{b}_{i,\al}\re l_p= p(i,\al)$.

\begin{rem}
 $(\bP,\le)$ is just a fancy form of adding $\kappa$ many Cohen reals, where
 we add $d\times\kappa$ many Cohen reals, and the conditions are
 homogenized over the levels of trees in the ranges of conditions and over the finite set of ordinals indexing the generic branches.
\end{rem}

Let $\dot{\mathcal{U}}$ be a $\bP$-name for a non-principal ultrafilter on $\om$.
To ease  notation, we  shall write sets $\{\al_i:i< d\}$ in
$[\kappa]^d$ as vectors $\vec{\al}=\lgl \al_0,\dots,\al_{d-1}\rgl$ in strictly increasing order.
For $\vec{\al}=\lgl\al_0,\dots,\al_{d-1}\rgl\in[\kappa]^d$,
rather than writing out
$\lgl \dot{b}_{0,\al_0},\dots, \dot{b}_{d-1,\al_{d-1}}\rgl$
 each time we wish to refer to these generic branches,
we shall simply
\begin{equation}
\mathrm{let\ \ }\dot{b}_{\vec{\al}}\mathrm{\  \  denote\ \ }
\lgl \dot{b}_{0,\al_0},\dots, \dot{b}_{d-1,\al_{d-1}}\rgl.
\end{equation}
For any $l<\om$,
\begin{equation}
\mathrm{\ let\ \ }\dot{b}_{\vec\al}\re l
\mathrm{\ \ denote \  \ }
\{\dot{b}_{i,\al_i}\re l:i<d\}.
\end{equation}

The goal now is to find
 infinite  pairwise disjoint sets $K'_i\sse \kappa$, $i<d$,
and a set of conditions $\{p_{\vec\al}:\vec\al\in \prod_{i<d}K_i'\}$ which are compatible,
have the same images in $T$,
and such that for some fixed $\varepsilon^*$,
for each $\vec\al\in\prod_{i<d}K_i'$,
$p_{\vec\al}$ forces
$c(\dot{b}_{\vec\al}\re l)=\varepsilon^*$  for  ultrafilter $\dot{\mathcal{U}}$ many  $l$.
Moreover, we will find nodes $t^*_i$, $i\le d$, such that for each $\vec\al\in\prod_{i<d}K_i'$,
$p_{\vec\al}(i,\al_i)=t^*_i$.
These will  serve as the basis  for  the  process of building the strong subtrees  $S_i\sse T_i$ on which $c$ is monochromatic.

For each $\vec\al\in[\kappa]^d$,
choose a condition $p_{\vec{\al}}\in\bP$ such that
\begin{enumerate}
\item
 $\vec{\al}\sse\vec{\delta}_{p_{\vec\al}}$;
\item
$p_{\vec{\al}}\forces$ ``There is an $\varepsilon\in 2$  such that
$c(\dot{b}_{\vec{\al}}\re l)=\varepsilon$
for $\dot{\mathcal{U}}$ many $l$";
\item
$p_{\vec{\al}}$ decides a value for $\varepsilon$, label it  $\varepsilon_{\vec{\al}}$; and
\item
$c(\{p_{\vec\al}(i,\al_i):i< d\})=\varepsilon_{\vec{\al}}$.
\end{enumerate}

Such conditions $p_{\vec\al}$ may be obtained as follows.
Given $\vec\al\in[\kappa]^d$,
take $p_{\vec\al}^1$ to be any condition such that $\vec\al\sse\vec{\delta}_{p_{\vec\al}^1}$.
Since $\bP$ forces $\dot{\mathcal{U}}$ to be an ultrafilter on $\om$, there is a condition
 $p_{\vec\al}^2\le p_{\vec\al}^1$ such that
$p_{\vec\al}^2$ forces that $c(\dot{b}_{\vec\al}\re l)$ is the same color for $\dot{\mathcal{U}}$ many $l$.
Furthermore,  there must be a stronger
condition deciding which of the colors
$c(\dot{b}_{\vec\al}\re l)$ takes on $\dot{\mathcal{U}}$ many levels $l$.
Let $p_{\vec\al}^3\le p_{\vec\al}^2$  be a condition which decides this  color, and let $\varepsilon_{\vec\al}$ denote  that color.
Finally, since $p_{\vec\al}^3$ forces that for  $\dot{\mathcal{U}}$ many $l$ the color
 $c(\dot{b}_{\vec\al}\re l)$
will equal $\varepsilon_{\vec{\al}}$,
there is some $p_{\vec\al}^4\le p_{\vec\al}^3$ which decides some level $l$ so that
$c(\dot{b}_{\vec\al}\re l)=\varepsilon_{\vec{\al}}$.
If $l_{p_{\vec\al}^4}<l$,
let $p_{\vec\al}$ be any member of $\bP$ such that
$p_{\vec\al}\le p_{\vec\al}^4$ and $l_{p_{\vec\al}}=l$.
If $l_{p_{\vec\al}^4}\ge l$,
let $p_{\vec\al}=\{((i,\delta), p_{\vec\al}^4(i,\delta)\re l):(i,\delta)\in d\times\vec\delta_{p_{\vec\al}^4}\}$,
the truncation of $p_{\vec\al}^4$ to images that have length $l$.
Then $p_{\vec\al}$ forces that $\dot{b}_{\vec\al}\re l=\{p_{\vec\al}(i,\al_i):i< d\}$, and hence
 $p_{\vec\al}$  forces that
$c(\{p_{\vec\al}(i,\al_i):i< d\})=\varepsilon_{\vec{\al}}$.

We are assuming $\kappa=\beth_{2d}$, which is at least  $\beth_{2d-1}(\aleph_0)^+$,  so  $\kappa\ra(\aleph_1)^{2d}_{\aleph_0}$ by Theorem \ref{thm.ER}.

Now we prepare  for an application of the \Erdos-Rado Theorem. Given two sets of ordinals $J,K$ we shall write $J<K$ if and only if every member of $J$ is less than every member of $K$.
Let $D_e=\{0,2,\dots,2d-2\}$ and  $D_o=\{1,3,\dots,2d-1\}$, the sets of  even and odd integers less than $2d$, respectively.
Let $\mathcal{I}$ denote the collection of all functions $\iota: 2d\ra 2d$ such that
\begin{equation}
\{\iota(0),\iota(1)\}<\{\iota(2),\iota(3)\}<\dots<\{\iota(2d-2),\iota(2d-1)\}.
\end{equation}
Thus, each $\iota$ codes two strictly increasing sequences $\iota\re D_e$ and $\iota\re D_o$, each of length $d$.
For $\vec{\theta}\in[\kappa]^{2d}$,
$\iota(\vec{\theta}\,)$ determines the pair of sequences of ordinals
\begin{equation}
 (\theta_{\iota(0)},\theta_{\iota(2)},\dots,\theta_{\iota(2d-2))}), (\theta_{\iota(1)},\theta_{\iota(3)},\dots,\theta_{\iota(2d-1)}),
\end{equation}
both of which are members of $[\kappa]^d$.
Denote these as $\iota_e(\vec\theta\,)$ and $\iota_o(\vec\theta\,)$, respectively.
To ease notation, let $\vec{\delta}_{\vec\al}$ denote
$\vec\delta_{p_{\vec\al}}$,
 $k_{\vec{\al}}$ denote $|\vec{\delta}_{\vec\al}|$,
and let $l_{\vec{\al}}$ denote  $l_{p_{\vec\al}}$.
Let $\lgl \delta_{\vec{\al}}(j):j<k_{\vec{\al}}\rgl$
denote the enumeration of $\vec{\delta}_{\vec\al}$
in increasing order.

Define a coloring  $f$ on $[\kappa]^{2d}$ into countably many colors as follows:
Given  $\vec\theta\in[\kappa]^{2d}$ and
 $\iota\in\mathcal{I}$, to reduce the number of subscripts,  letting
$\vec\al$ denote $\iota_e(\vec\theta\,)$ and $\vec\beta$ denote $\iota_o(\vec\theta\,)$,
define
\begin{align}\label{eq.fiotatheta}
f(\iota,\vec\theta\,)=& \,
\lgl \iota, \varepsilon_{\vec{\al}}, k_{\vec{\al}},
\lgl \lgl p_{\vec{\al}}(i,\delta_{\vec{\al}}(j)):j<k_{\vec{\al}}\rgl:i< d\rgl,\cr
& \lgl  \lgl i,j \rgl: i< d,\ j<k_{\vec{\al}},  \vec{\delta}_{\vec{\al}}(j)=\al_i \rgl,
\lgl \lgl j,k\rgl:j<k_{\vec{\al}},\ k<k_{\vec{\beta}},\ \delta_{\vec{\al}}(j)=\delta_{\vec{\beta}}(k)\rgl\rgl.
\end{align}
Let $f(\vec{\theta}\,)$ be the sequence $\lgl f(\iota,\vec\theta\,):\iota\in\mathcal{I}\rgl$, where $\mathcal{I}$ is given some fixed ordering.
Since the range of $f$ is countable,
applying the \Erdos-Rado Theorem,
we obtain a subset $K\sse\kappa$ of cardinality $\aleph_1$
which is homogeneous for $f$.
Take $K'\sse K$ such that between each two members of $K'$ there is a member of $K$ and $\min(K')>\min(K)$.
Take subsets $K'_i\sse K'$ such that  $K'_0<\dots<K'_{d-1}$
and   each $|K'_i|=\aleph_0$.

\begin{claim}\label{claim.onetypes}
There are $\varepsilon^*\in 2$, $k^*\in\om$,
and $ \lgl t_{i,j}: j<k^*\rgl$, $i< d$,
 such that
 $\varepsilon_{\vec{\al}}=\varepsilon^*$,
$k_{\vec\al}=k^*$,   and
$\lgl p_{\vec\al}(i,\delta_{\vec\al}(j)):j<k_{\vec\al}\rgl
=
 \lgl t_{i,j}: j<k^*\rgl$,
for each $i< d$,
for all $\vec{\al}\in \prod_{i<d}K'_i$.
\end{claim}

\begin{proof}
Let  $\iota$ be the member in $\mathcal{I}$
which is the identity function on $2d$.
For any pair $\vec{\al},\vec{\beta}\in \prod_{i<d}K'_i$, there are $\vec\theta,\vec\theta'\in [K]^{2d}$
such that
$\vec\al=\iota_e(\vec\theta\,)$ and $\vec\beta=\iota_e(\vec\theta'\,)$.
Since $f(\iota,\vec\theta\,)=f(\iota,\vec\theta'\,)$,
it follows that $\varepsilon_{\vec\al}=\varepsilon_{\vec\beta}$, $k_{\vec{\al}}=k_{\vec{\beta}}$,
and $\lgl \lgl p_{\vec{\al}}(i,\delta_{\vec{\al}}(j)):j<k_{\vec{\al}}\rgl:i< d\rgl
=
\lgl \lgl p_{\vec{\beta}}(i,\delta_{\vec{\beta}}(j)):j<k_{\vec{\beta}}\rgl:i< d\rgl$.
\end{proof}

Let $l^*$ denote the length of the nodes $t_{i,j}$.

\begin{claim}\label{claim.j=j'}
Given any $\vec\al,\vec\beta\in \prod_{i<d}K'_i$,
if $j,j'<k^*$ and $\delta_{\vec\al}(j)=\delta_{\vec\beta}(j')$,
 then $j=j'$.
\end{claim}

\begin{proof}
Let $\vec\al,\vec\beta$ be members of $\prod_{i<d}K'_i$   and suppose that
 $\delta_{\vec\al}(j)=\delta_{\vec\beta}(j')$ for some $j,j'<k^*$.
For each $i<d$, let  $\rho_i$ be the relation from among $\{<,=,>\}$ such that
 $\al_i\,\rho_i\,\beta_i$.
Let   $\iota$ be a member of  $\mathcal{I}$  such that for each $\vec\zeta\in[K]^{2d}$ and each $i<d$,
$\zeta_{\iota(2i)}\ \rho_i \ \zeta_{\iota(2i+1)}$.
Take
$\vec\theta\in[K']^{2d}$ satisfying
$\iota_e(\vec\theta)=\vec\al$ and $\iota_o(\vec\theta)= \vec\beta$.
Since between any two members of $K'$ there is a member of $K$, there is a
 $\vec\gamma\in[K]^{d}$ such that  for each $i< d$,
 $\al_i\,\rho_i\,\gamma_i$ and $\gamma_i\,\rho_i\, \beta_i$.
Given that  $\al_i\,\rho_i\,\gamma_i$ and $\gamma_i\,\rho_i\, \beta_i$ for each $i<d$,
there are  $\vec\mu,\vec\nu\in[K]^{2d}$ such that $\iota_e(\vec\mu)=\vec\al$,
$\iota_o(\vec\mu)=\vec\gamma$,
$\iota_e(\vec\nu)=\vec\gamma$, and $\iota_o(\vec\nu)=\vec\beta$.
Since $\delta_{\vec\al}(j)=\delta_{\vec\beta}(j')$,
the pair $\lgl j,j'\rgl$ is in the last sequence in  $f(\iota,\vec\theta)$.
Since $f(\iota,\vec\mu)=f(\iota,\vec\nu)=f(\iota,\vec\theta)$,
also $\lgl j,j'\rgl$ is in the last  sequence in  $f(\iota,\vec\mu)$ and $f(\iota,\vec\nu)$.
It follows that $\delta_{\vec\al}(j)=\delta_{\vec\gamma}(j')$ and $\delta_{\vec\gamma}(j)=\delta_{\vec\beta}(j')$.
Hence, $\delta_{\vec\gamma}(j)=\delta_{\vec\gamma}(j')$,
and therefore $j$ must equal $j'$.
\end{proof}

For any $\vec\al\in \prod_{i<d}K'_i$ and any $\iota\in\mathcal{I}$, there is a $\vec\theta\in[K]^{2d}$ such that $\vec\al=\iota_o(\vec\theta)$.
By homogeneity of $f$ and  by the first sequence in the second line of equation  (\ref{eq.fiotatheta}), there is a strictly increasing sequence
$\lgl j_i:i< d\rgl$  of members of $k^*$ such that for each $\vec\al\in \prod_{i<d}K'_i$,
$\delta_{\vec\al}(j_i)=\al_i$.
For each $i< d$, let $t^*_i$ denote $t_{i,j_i}$.
Then  for each $i<d$ and each $\vec\al\in \prod_{i<d}K'_i$,
\begin{equation}
p_{\vec\al}(i,\al_i)=p_{\vec{\al}}(i, \delta_{\vec\al}(j_i))=t_{i,j_i}=t^*_i.
\end{equation}

\begin{lem}\label{lem.compat}
The set of conditions  $\{p_{\vec{\al}}:\vec{\al}\in \prod_{i<d}K'_i\}$ is  compatible.
\end{lem}

\begin{proof}
Suppose toward a contradiction that there are $\vec\al,\vec\beta\in\prod_{i<d}K'_i$ such that $p_{\vec\al}$ and
 $p_{\vec\beta}$ are incompatible.
By Claim \ref{claim.onetypes},
for each $i<d$ and $j<k^*$,
\begin{equation}
 p_{\vec{\al}}(i,\delta_{\vec{\al}}(j))
=t_{i,j}
=p_{\vec{\beta}}(i,\delta_{\vec{\beta}}(j)).
\end{equation}
Thus,
 the only way $p_{\vec\al}$ and $p_{\vec\beta}$ can be incompatible is if
there are  $i< d$ and $j,j'<k^*$ such that
$\delta_{\vec\al}(j)=\delta_{\vec\beta}(j')$
but
$p_{\vec\al}(i,\delta_{\vec\al}(j))\ne p_{\vec\beta}(i,\delta_{\vec\beta}(j'))$.
Since
$p_{\vec\al}(i,\delta_{\vec\al}(j))=t_{i,j}$ and
$p_{\vec\beta}(i,\delta_{\vec\beta}(j'))= t_{i,j'}$,
this would imply
 $j\ne j'$.
But by Claim \ref{claim.j=j'},
$j\ne j'$ implies that $\delta_{\vec\al}(j)\ne\delta_{\vec\beta}(j')$, a contradiction.
Therefore,
 $p_{\vec\al}$ and $p_{\vec\beta}$ must be  compatible.
\end{proof}

To build the strong subtrees $S_i\sse T_i$,
for each $i<d$,
let $\stem(S_i)=t_i^*$.
Let $l_0$ be the length of the $t_i^*$.
\vskip.1in

\noindent\underline{Induction Assumption}:
Assume  $1\le m<\om$ and
there  are  levels $l_0,\dots, l_{m-1}$
and
 we have constructed   finite strong subtrees $S_i\re l_{m-1}$ of $T_i$, $i<d$,
such that
for each $j<m$,
$c$ takes color $\varepsilon^*$ on each member of
$\prod_{i<d} S_i(l_j)$.
\vskip.1in

For each $i<d$, let $X_i$ denote the set of immediate extensions  in $T_i$ of  the nodes in $S_i(l_{m-1})$.
For each $i<d$,
let $J_i$ be a subset of $K_i'$ with the same size as $X_i$.
For each $i< d$, label the nodes in $X_i$ as
 $\{q(i,\delta):\delta\in J_i\}$.
Let $\vec{J}$ denote  $\prod_{i< d}J_i$.
Notice that for each
$\vec\al\in \vec{J}$ and $i<d$, $q(i,\al_i)\contains t^*_i=p_{\vec{\al}}(i,\al_i)$.

We now construct a condition $q\in\bP$ such that
for each $\vec\al\in\vec{J}$,
$q\le p_{\vec\al}$.
Let
 $\vec{\delta}_q=\bigcup\{\vec{\delta}_{\vec\al}:\vec\al\in \vec{J}\}$.
For each pair $(i,\gamma)$ with $i<d$ and $\gamma\in\vec{\delta}_q\setminus
J_i$,
there is at least one $\vec{\al}\in\vec{J}$ and some $j'<k^*$ such that $\delta_{\vec\al}(j')=\gamma$.
For any other $\vec\beta\in\vec{J}$ for which $\gamma\in\vec{\delta}_{\vec\beta}$,
since the set $\{p_{\vec{\al}}:\vec{\al}\in\vec{J}\}$ is pairwise compatible by Lemma \ref{lem.compat},
it follows
 that $p_{\vec\beta}(i,\gamma)$ must  equal $p_{\vec{\al}}(i,\gamma)$, which is exactly $t^*_{i,j'}$.
%Further note that  in this case,
%$\vec{\delta}_{\vec\beta}(j')$ must also equal  %$\gamma$:
%If  $j''<k^*$ is  the integer  satisfying %$\gamma=\vec{\delta}_{\vec\beta}(j'')$,
%then
%$t^*_{i,j''}=p_{\vec\beta}(i,%\vec{\delta}_{\beta}%(j''))=p_{\vec\beta}(i,%\gamma)=p_{\vec\al}(i,\gamma)=t^*_{i,j'}$, and %hence $j''=j'$.
Let $q(i,\gamma)$ be the leftmost extension
 of $t_{i,j'}^*$ in $T$.
%By the above argument, $q(i,\gamma)$ is well-defined.
%(Note: we could also have required that the conditions in $\bP$ have all nodes corresponding to different $\al$'s be different, and then this second part I deleted would be true.  It's not necessary though.)

Thus, $q(i,\gamma)$ is defined for each pair $(i,\gamma)\in d\times \vec{\delta}_q$.
Define
\begin{equation}
q= \{\lgl (i,\delta),q(i,\delta)\rgl: i<d,\  \delta\in \vec{\delta}_q\}.
\end{equation}

\begin{claim}\label{claim.qbelowpal}
For each $\vec\al\in \vec{J}$,
$q\le p_{\vec\al}$.
\end{claim}

\begin{proof}
Given $\vec\al\in\vec{J}$,
by  our construction
for each pair $(i,\gamma)\in d\times\vec{\delta}_{\vec\al}$, we have
$q(i,\gamma)\contains p_{\vec{\al}}(i,\gamma)$.
\end{proof}

To construct  the $m$-th level  of the strong trees $S_i$,
take an $r\le q$ in  $\bP$ which  decides some $l_m\ge l_q$  for which   $c(\dot{b}_{\vec\al}\re l_m)=\varepsilon^*$, for all $\vec\al\in\vec{J}$.
By extending or truncating $r$, we may assume, without
 loss of generality,  that $l_m$ is equal to the length of the nodes in the image of $r$.
Notice that since
$r$ forces $\dot{b}_{\vec{\al}}\re l_m=\{r(i,\al_i):i<d\}$ for each $\vec\al\in \vec{J}$,
and since the coloring $c$ is defined in the ground model,
it is simply true in the ground model that
$c(\{r(i,\al_i):i<d\})=\varepsilon^*$ for each $\vec\al\in \vec{J}$.
For each $i<d$ and $\al_i\in J_i$,
extend the nodes in $X_i$ to level $l_m$ by extending  $q(i,\delta)$ to $r(i,\delta)$.
Thus, for each $i<d$,
we define $S_i(l_m)=\{r(i,\delta):\delta\in J_i\}$.
It follows that $c$ takes value $\varepsilon^*$ on each member of $\prod_{i<d} S_i(l_m)$.

For each $i<d$,  let $S_i=\bigcup_{m<\om} S_i(l_m)$, and let $L=\{l_m:m<\om\}$.
Then  each $S_i$ is a strong subtree of $T_i$, and
$c$ takes value $\varepsilon^*$ on $\bigcup_{l\in L}\prod_{i<d}S_i(l)$.
\end{proof}

%%%%%%%%%%%%%%%%%%%%%%%%%%%%%%%%%%%%%%%%%%%
%%%%%%%%%%%%%%%%%%%%%%%%%%%%%%%%%%%%%%%%%%%

The Halpern-\Lauchli\ Theorem is used to obtain
a space of strong trees with important Ramsey properties.

\begin{defn}[Milliken space]\label{defn.milliken}
The {\em Milliken space} is the triple
 $(\mathcal{M},\le,r)$, where
$\mathcal{M}$ consists of all infinite strong subtrees $T\sse 2^{<\om}$;
$\le$ is the partial ordering defined by
 $S\le T$ if and only if  $S$ is a subtree of $T$;
and
$r_k(T)$ is
the $k$-th restriction of  $T$, meaning
the set of all
$t\in T$ with  $<k$ splitting nodes in $T$ below $t$.
\end{defn}

Thus, $r_k(T)$ is a finite strong tree with $k$ many levels.
Let $\mathcal{AM}_k$ denote the set of all strong trees with $k$ levels, and let
 $\mathcal{AM}$ denote all finite strong trees.
A topology on $\mathcal{M}$ is generated by basic open sets of the form
\begin{equation}
[U,T]=\{S\in\mathcal{M}:\exists k(r_k(S)=U)\mathrm{\ and\ }S\le T\},
\end{equation}
where $U\in\mathcal{A}$ and $T\in\mathcal{M}$.
Milliken proved in \cite{Milliken81} that, in current terminology, the space of all strong trees  forms a topological Ramsey space.
The  properties of  meager and having the property of Baire are defined in the standard way from the topology.
A subset $\mathcal{X}\sse\mathcal{M}$
is {\em Ramsey} if for every $\emptyset\ne [U,T]$ there is an $S\in[U,T]$ such that either $[U,S]\sse\mathcal{X}$ or else $[U,S]\cap\mathcal{X}=\emptyset$.
$\mathcal{X}$ is {\em Ramsey null} if the second option occurs for any given $[U,S]$.

\begin{defn}[\cite{TodorcevicBK10}]\label{defn.5.2}
A triple $(\mathcal{R},\le,r)$ is a {\em topological Ramsey space} if every subset of $\mathcal{R}$  with the property of Baire  is Ramsey and if every meager subset of $\mathcal{R}$ is Ramsey null.
\end{defn}

The following theorem is a consequence of Milliken's result that the space of all strong subtrees of $2^{<\om}$ is a topological Ramsey space.
This  theorem provides a powerful  tool for obtaining finite big Ramsey degrees
for the Rado graph and  the rationals, considered in the next section.

\begin{thm}[Milliken, \cite{Milliken81}]\label{thm.M}
For each $k<\om$, $T\in\mathcal{M}$, and coloring
of all finite strong subtrees of $T$ with $k$ levels,
there is an infinite strong subtree  $S\le T$ such that
all finite strong subtrees of $S$ with $k$ levels have the same color.
\end{thm}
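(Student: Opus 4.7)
My plan is to prove the theorem by induction on $k$, using Theorem \ref{thm.matrixHL} (Halpern-\Lauchli) as the main engine. Since any finite coloring reduces to the two-color case (merge all nonzero colors, apply the two-color version, and recurse on the minority class), it suffices to handle two colors throughout.

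The base case $k=1$ is immediate: a one-level strong subtree of $T$ is a single node, so $c$ is a two-coloring of the nodes of $T$. After identifying $T$ with $2^{<\om}$ via its splitting levels, apply Theorem \ref{thm.matrixHL} with $d=1$ and $T_0 = T$ to obtain an infinite set of levels $L$ and a strong subtree $S \le T$ with splitting levels exactly $L$ on which $c$ is monochromatic; since the nodes of $S$ lie precisely at the levels in $L$, all nodes of $S$ receive one color.

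For the inductive step $k \to k+1$, decompose each $(k+1)$-level strong subtree as $U = V \cup X$, where $V$ is its initial $k$-level strong subtree and $X$ is the top level set extending the leaves $v_0,\dots,v_{m-1}$ of $V$. The goal is to produce $S \le T$ on which the color $c(V \cup X)$ depends only on $V$; the induced coloring $c^\star(V) := c(V \cup X)$ on $k$-level strong subtrees of $S$ can then be stabilized by the inductive hypothesis, yielding $S' \le S$ on which $c$ is constant. To stabilize a single $V$, set $T_i := \{t \in T : t \supseteq v_i\}$ for $i < m$; valid extensions $X$ correspond to elements of $\prod_{i<m} T_i(l)$, so $c$ restricts to a two-coloring as in the hypothesis of Theorem \ref{thm.matrixHL}. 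Apply that theorem with $d = m$ to obtain strong subtrees $S_i \le T_i$ sharing a common splitting level set on which the coloring is constant; then $V \cup \bigcup_{i<m} S_i$ is a strong subtree of $T$ extending $V$ on which $c(V \cup X)$ is constant for every valid $X$.

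To stabilize all $V$ simultaneously is a fusion argument, which I anticipate as the main obstacle. Build a decreasing sequence of infinite strong subtrees $T = T_0 \ge T_1 \ge \cdots$ together with an increasing sequence of splitting levels $l_0 < l_1 < \cdots$ such that $T_{n+1}$ agrees with $T_n$ through level $l_n$; at stage $n$, enumerate the finitely many $k$-level strong subtrees $V$ of $T_n$ with leaves at levels $\le l_n$ and successively apply the single-$V$ procedure to each in the current tree, each time shrinking only above level $l_n$. The key technical point is that Theorem \ref{thm.matrixHL} applied to the tails above the leaves of $V$ modifies only those tails, so the fused portion of $T_n$ below level $l_n$ is preserved throughout. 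The diagonal $S := \bigcap_n T_n$ is then an infinite strong subtree of $T$, and each $k$-level $V \le S$ is processed at some finite stage, so $c(V \cup X)$ depends only on $V$ for $(k+1)$-level strong subtrees $V \cup X$ of $S$; applying the inductive hypothesis to $c^\star$ on $S$ completes the proof.
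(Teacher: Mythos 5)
Your overall architecture is the paper's: the paper proves the one-step pigeonhole (Lemma \ref{lem.A4Milliken}) by applying the Halpern-\Lauchli\ Theorem to the cones sitting above a fixed finite strong tree, and then notes that Theorem \ref{thm.M} follows by induction on $k$; your base case, your reduction $c\mapsto c^\star$, and your fusion are the standard way of carrying out that induction. However, the single-$V$ stabilization step fails as written. A $(k{+}1)$-level strong subtree extending $V$ has the form $V\cup X$ where $X$ consists of one node above \emph{each immediate successor in $T$ of each leaf} of $V$, all at a common level: each leaf of $V$ must split in $V\cup X$, so $|X|=2m$, not $m$. Hence $X$ does not correspond to an element of $\prod_{i<m}T_i(l)$ for your cones $T_i=\{t\in T: t\supseteq v_i\}$, and Halpern-\Lauchli\ applied to those $m$ cones colors the wrong objects. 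Worse, a strong subtree $S_i\subseteq T_i$ in the sense of Theorem \ref{thm.matrixHL} is meet-closed and so has a single stem at its minimal level; in $V\cup\bigcup_{i<m}S_i$ each leaf $v_i$ is therefore extended by only one node and does not split, so the union is not a strong subtree with $r_k=V$. The repair is exactly what the paper does in Lemma \ref{lem.A4Milliken}: list the $2m=2^k$ immediate successors $s_j$ of the leaves of $V$, set $T_j=\{t\in T: t\supseteq s_j\}$, and apply Theorem \ref{thm.matrixHL} to these $2^k$ trees, so that the monochromatic level products are precisely the admissible top levels $X$.

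A smaller point concerns the fusion: when you ``shrink only above level $l_n$'' for one $V$ at a time, you must thin \emph{all} cones of the current tree above $l_n$ to the same set of splitting levels, not merely the cones over the successors of the leaves of $V$; otherwise different cones branch at different levels and the result is no longer a strong subtree of $T$. This is handled by running Halpern-\Lauchli\ on all cones above level $l_n$ simultaneously, with the induced coloring depending only on the coordinates relevant to $V$. With these two corrections your proof goes through and coincides with the paper's.
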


In the setting of topological Ramsey spaces, the following is the
pigeonhole principle ({\bf Axiom A.4} in \cite{TodorcevicBK10}); it
 follows from an application
of the  Halpern-\Lauchli\ Theorem as shown below.
For $U\in \mathcal{AM}_k$,
$r_{k+1}[U,T]$ denotes the set
$\{r_{k+1}(S):S\in [U,T]\}$.

\begin{lem} \label{lem.A4Milliken}
Let $k<\om$, $U\in\mathcal{AM}_k$, and $T\in\mathcal{M}$ such that
$r_{k+1}[U,T]$ is nonempty.
Then for each coloring of $r_{k+1}[U,T]$ into finitely many colors,
there is an $S\in [U,T]$ such that all members of $r_{k+1}[U,S]$ have the same color.
\end{lem}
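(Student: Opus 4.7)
The plan is to reduce the pigeonhole to the matrix Halpern--\Lauchli\ theorem (Theorem \ref{thm.matrixHL}) applied to the $2^k$ cones of $\widehat{T}$ above the two immediate $\widehat{T}$-successors of each maximal node of $U$. Enumerate the maximal nodes of $U$ as $u_0,\dots,u_{2^{k-1}-1}$, and for each $j<2^{k-1}$ let $v_j^0,v_j^1$ denote the two immediate successors of $u_j$ in $\widehat{T}$; nonemptiness of $r_{k+1}[U,T]$ forces both to lie in $\widehat{T}$. Put
\[
T_j^i=\widehat{T}\cap\{w\in 2^{<\om}:w\contains v_j^i\}\qquad\text{for } j<2^{k-1},\ i\in\{0,1\}.
\]
Each $T_j^i$ is finitely branching with $\widehat{T_j^i}=T_j^i$, and its splitting levels are exactly $L_T\cap(|v_j^i|,\om)$, where $L_T$ denotes the splitting level set of $T$. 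Every $V\in r_{k+1}[U,T]$ is uniquely specified by its new top level $n\in L_T$ together with one extension $y_j^i\in T$ of each $v_j^i$ at level $n$, yielding a natural bijection $r_{k+1}[U,T]\leftrightarrow\bigcup_{n\in L_T}\prod_{j,i}(T_j^i(n)\cap T)$. Transfer the given finite coloring $c$ through this bijection to a finite coloring $c'$ of $\bigcup_n\prod_{j,i}T_j^i(n)$, assigning an arbitrary value to level sets at levels outside $L_T$.

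Next, iterate Theorem \ref{thm.matrixHL} finitely many times (halving the color palette at each step and passing to the resulting monochromatic subsystem) to produce strong subtrees $S_j^i\sse T_j^i$ with a common branching level set $L\sse\om$ on which $c'$ is constant, say with value $\varepsilon^*$. A key observation is that $L\sse L_T$: any node of $\widehat{T}$ whose level lies outside $L_T$ has only a single successor in $\widehat{T}$ and therefore cannot serve as a branching node of any strong subtree of $T_j^i$. Consequently, for every $n\in L$ and every $\vec x\in\prod_{j,i}S_j^i(n)$, the tuple $\vec x$ lies in $T$ and corresponds under the bijection to a genuine element of $r_{k+1}[U,T]$; in particular $\varepsilon^*$ is one of the original colors of $c$.

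Set $S=U\cup\bigcup_{j,i}S_j^i$. Since $\min L>|u_j|$ for each $j$, and since $U$'s maximal nodes have the $v_j^i$ as their immediate $\widehat{T}$-successors, $S$ is an infinite strong subtree of $T$ whose splitting level set is $L_U\cup L$; its first $k$ splitting levels are those of $U$, so $r_k(S)=U$ and $S\in[U,T]$. Each $V\in r_{k+1}[U,S]$ is of the form $U\cup\vec x$ for some $\vec x\in\prod_{j,i}S_j^i(n)$ with $n\in L$, and therefore $c(V)=c'(\vec x)=\varepsilon^*$, giving monochromaticity of $c$ on $r_{k+1}[U,S]$.

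The chief technical delicacy is reconciling the two strong-subtree notions: Theorem \ref{thm.matrixHL} requires $\widehat{T_i}=T_i$, whereas members of the Milliken space $\mathcal{M}$ are not themselves closed under initial segments. Running Halpern--\Lauchli\ on $\widehat{T}$ rather than $T$, and then invoking the single-successor observation above to force $L\sse L_T$, is what ensures that the pieces $S_j^i$ actually reassemble into a bona fide strong subtree of $T$.
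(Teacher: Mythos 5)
Your proof is correct and follows essentially the same route as the paper's: reduce to two colors, split the tree into the $2^k$ cones above the immediate successors of the maximal nodes of $U$, transfer the coloring, apply the matrix Halpern--\Lauchli\ theorem, and reassemble; you are in fact more careful than the paper's write-up about working inside $\widehat{T}$ so that the hypothesis $\widehat{T_i}=T_i$ of Theorem \ref{thm.matrixHL} is met, and about why the new branching levels fall back into the level set of $T$. The only omission is the degenerate case $k=0$, where $U$ has no maximal nodes and one simply applies the Halpern--\Lauchli\ theorem to the single tree, which the paper treats as a separate one-line case.
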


\begin{proof}
By induction on the number of colors, it suffices to consider colorings into two colors.
Let $c:r_{k+1}[U,T]\ra 2$ be given.
If $k=0$, then $r_1[U,T]$ is simply the set of nodes in $T$.
In this case, the pigeonhole principle is exactly the Halpern-\Lauchli\ Theorem on one tree.
Now suppose $k\ge 1$.
Note that
there are $2^k$ many immediate successors  of the maximal nodes in $U$;
list
these  as $s_i$, $i<2^k$, and let $T_i=\{t\in T:t\contains s_i\}$.
Let $L$ denote the levels of the trees $T_i$; that is, $L$ is the set of all  the lengths of the nodes in $T_i$ which split.
This set $L$ is the same for each $i<2^k$, since the $T_i$ are cones in the strong tree $T$ starting at the level one above the maximum lengths of nodes in $U$.
Notice that
$r_{k+1}[U,T]$ is exactly the set of all $U\cup \{u_i:i<2^k\}$, where $\lgl u_i:i<2^k\rgl$ is a member of $\prod_{i<2^k}T_i(l)$ for some $l\in L$.
Let $d$  be the coloring on $\bigcup_{l\in L}\prod_{i<2^k}T_i(l)$ induced by $c$ as follows:
\begin{equation}
d(\lgl u_i:i<2^k\rgl)=c(U\cup \{u_i:i<2^k\}).
\end{equation}
Apply the Halpern-\Lauchli\ Theorem for $2^k$ many trees to obtain an infinite set of levels $K\sse L$ and strong subtrees $S_i\sse T_i$ with nodes at the levels in $K$
such that  $d$ is monochromatic on $\bigcup_{l\in K}\prod_{i<2^k}S_i(l)$.
Let
\begin{equation}
S=U\cup\bigcup_{i<2^k}S_i.
\end{equation}
Then $S$ is a strong subtree of $T$ such that $r_k(S)=U$
and $c$ is monochromatic on $r_{k+1}[U,S]$.
\end{proof}

Theorem \ref{thm.M} is obtained by  Lemma \ref{lem.A4Milliken} using induction on $k$.

%%%%%%%%%%%%%%%%%%%%5
%%%%%%%%%%%%%%%%%%%%5
%%%%%%%%%%%%%%%%%%%%5
%%%%%%%%%%%%%%%%%%%%5
%%%%%%%%%%%%%%%%%%%%5
%%%%%%%%%%%%%%%%%%%%5

\section{Applications of Milliken's theorem to homogeneous binary relational structures}\label{sec.Sauer}

The {\em random graph} is the graph on $\om$ many vertices such  that given any two vertices, there is a 50\% chance that there is an edge between them.
A graph $\R$ on $\om$ many vertices is random if and only if it is universal for all
  countable graphs; that is, every countable graph embeds into $\R$.
This is equivalent to $\R$ being
 homogeneous;
any isomorphism between two finite subgraphs of $\R$ can be extended to an automorphism of $\R$.
Another characterization of the random graph is that it
 is the \Fraisse\ limit of the \Fraisse\ class of finite graphs.
As the random graph  on $\om$ many vertices was first constructed by R.\ Rado, we will  call it the {\em Rado graph}, and denote it by $\mathcal{R}$.

The Rado graph has the Ramsey Property for vertex colorings.

\begin{fact}[Folklore]\label{fact.RvertexRP}
For each  coloring of the vertices of the Rado graph $\mathcal{R}$ into finitely many colors,
 there is a subgraph $\mathcal{R}'$ which is  also a Rado graph,
in which the vertices are homogeneous for $c$.
\end{fact}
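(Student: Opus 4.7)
The plan is to exploit the \emph{extension property} that characterizes $\mathcal{R}$: for every pair of disjoint finite sets $A, B \subseteq V(\mathcal{R})$ there are infinitely many vertices $v$ adjacent to every member of $A$ and non-adjacent to every member of $B$. Proceeding by induction on the number of colors $k$, the general case reduces to $k=2$ by grouping colors $1,\dots,k-1$ together, applying the two-color case to the resulting partition, and then applying the inductive hypothesis to the resulting Rado copy if it landed on the grouped side. I therefore focus on $c:V(\mathcal{R})\to\{0,1\}$ and show that whenever $c^{-1}(0)$ contains no isomorphic copy of $\mathcal{R}$, the other class $c^{-1}(1)$ must.

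The first key step is to locate a \emph{witness to the failure} for color $0$: a finite $F \subseteq c^{-1}(0)$ together with a function $\phi:F\to 2$ such that no vertex of $c^{-1}(0)$ is adjacent to precisely the set $\phi^{-1}(1)\subseteq F$. If no such witness existed, one could enumerate all finite extension tasks and recursively build a Rado copy inside $c^{-1}(0)$ vertex by vertex, contradicting the assumption. Fixing such $(F,\phi)$, let
\[
V' \;=\; \{\,v\in V(\mathcal{R}) : v\text{ realizes }\phi\text{ over }F\,\}.
\]
Then $V'$ is infinite by the extension property, and entirely contained in $c^{-1}(1)$ by the choice of witness.

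The second step is to extract a Rado copy from $V'$, for which the crucial observation is that $V'$ is itself \emph{Rado-dense}: given any finite $G\subseteq V'$ and any $\psi:G\to 2$, amalgamating $\psi$ with $\phi$ on $F\cup G$ and invoking the extension property once more yields a vertex $v\in V'$ realizing $\psi$ over $G$. A standard back-and-forth enumeration of finite extension tasks over $V'$ then produces a copy $\mathcal{R}'\subseteq V'\subseteq c^{-1}(1)$, completing the proof. The main subtlety lies in the first step, namely verifying that the absence of any witness $(F,\phi)$ really does allow an unobstructed recursive construction of a Rado copy inside $c^{-1}(0)$; the back-and-forth in the second step and the inductive reduction on $k$ are then routine once the extension property is in hand.
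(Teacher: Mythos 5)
The paper states this Fact as folklore and gives no proof of its own, so there is nothing to compare against; judged on its merits, your argument is the standard one and is correct. The dichotomy is sound: if no witness $(F,\phi)$ exists then $c^{-1}(0)$ itself satisfies the extension property and is therefore a copy of $\mathcal{R}$, while if a witness exists then $V'$ lies in $c^{-1}(1)$ and inherits the extension property by amalgamating any extension task over $G\subseteq V'$ with $\phi$ on $F$ (the two constraint sets are disjoint since $V'\cap F=\emptyset$). Two small points of hygiene: ``realizes $\phi$ over $F$'' should explicitly require $v\notin F$, since otherwise a vertex of $F$ could vacuously spoil the witness property without helping the recursive construction; and the degenerate case where $c^{-1}(0)$ is finite should be noted (any $(c^{-1}(0),\phi)$ is then a witness), though neither affects the substance of the proof.
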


For finite colorings of the copies of a  graph with more than one vertex, it is not always possible to cut down to one color in a copy of the full Rado graph.
However, Sauer showed that there is a bound on the number of colors that cannot be avoided.

\begin{thm}[Sauer, \cite{Sauer06}]\label{thm.Sauer}
The Rado graph has finite big Ramsey degrees.
\end{thm}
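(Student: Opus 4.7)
The plan is to code the Rado graph $\mathcal{R}$ inside the binary tree $2^{<\om}$ and then reduce a finite coloring of copies of a finite graph $F$ in $\mathcal{R}$ to a finite coloring of certain finite strong subtrees, to which Milliken's Theorem (Theorem \ref{thm.M}) applies. The standard coding identifies the vertices of $\mathcal{R}$ with an antichain of nodes in $2^{<\om}$ (say, one node per level) and declares two vertices $s,t$ with $|s|<|t|$ to be adjacent iff $t(|s|)=1$; this is the \emph{passing number} coding. A routine extension argument shows that any set of nodes meeting each level of $2^{<\om}$ (or indeed, the terminal nodes of any infinite strong subtree) carries a graph isomorphic to $\mathcal{R}$, so every infinite strong subtree of $2^{<\om}$ contains a copy of $\mathcal{R}$.

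Next I would introduce the notion of a \emph{similarity type} for a finite antichain $A\sse 2^{<\om}$. Two antichains $A$ and $A'$ have the same similarity type if there is an order- and length-preserving bijection between their tree-closures (under meets and initial segments to the splitting levels) that preserves the passing numbers. For a fixed finite graph $F$, there are only finitely many similarity types of antichains in $2^{<\om}$ that code a copy of $F$ via the passing-number rule; call this finite set $\mathrm{Sim}(F)$. The key combinatorial observation is that any such antichain $A$ sits inside a canonical finite strong subtree $\mathrm{Env}(A)$ (its ``envelope''), whose number of levels is bounded by a function of $|F|$ and the similarity type, and that fixing the similarity type of $A$ determines the induced graph on $A$ completely.

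Given a coloring $c$ of copies of $F$ in $\mathcal{R}\cong 2^{<\om}$ into finitely many colors, I would fix an enumeration $\tau_1,\dots,\tau_N$ of $\mathrm{Sim}(F)$. For each $\tau_i$, define a coloring $c_i$ on finite strong subtrees of $2^{<\om}$ isomorphic to the envelope-type of $\tau_i$ by assigning to each such strong subtree $U$ the color $c(A_U)$, where $A_U$ is the unique antichain of similarity type $\tau_i$ that $U$ envelops. Milliken's Theorem gives, for each $i$ successively, an infinite strong subtree on which $c_i$ is constant; iterating (fusing) across $i=1,\dots,N$ produces a single infinite strong subtree $T^*$ such that for every $\tau_i$, all copies of $F$ in $T^*$ of similarity type $\tau_i$ take a single color $\varepsilon_i$. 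Since $T^*$ contains a copy $\mathcal{R}'$ of the Rado graph, and every copy of $F$ in $\mathcal{R}'$ belongs to one of the similarity types in $\mathrm{Sim}(F)$, the coloring $c$ uses at most $|\mathrm{Sim}(F)|$ colors on copies of $F$ in $\mathcal{R}'$. Thus $n(F)\le |\mathrm{Sim}(F)|<\om$.

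The main obstacle is the combinatorial bookkeeping in the definition of similarity type and envelope: one must verify (i) that similarity type determines the induced graph, (ii) that each similarity type has a well-defined envelope whose isomorphism class as a finite strong subtree is uniquely determined, and (iii) that the envelope of a copy of $F$ inside a strong subtree $T$ is itself a finite strong subtree of $T$, so that $c_i$ is legitimately a coloring of finite strong subtrees of $T$ to which Lemma \ref{lem.A4Milliken} and Theorem \ref{thm.M} apply. Once these foundations are in place, the Ramsey step is a direct appeal to Milliken; the bound $|\mathrm{Sim}(F)|$ is known to be sharp but I would not attempt to prove sharpness here.
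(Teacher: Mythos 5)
Your proposal follows the same overall route as Sauer's proof as outlined in Section \ref{sec.Sauer} (code the graph by passing numbers, classify finite antichains by similarity type, envelope each type into a finite strong tree, transfer the coloring and apply Milliken once per type), but it omits the step that actually makes the scheme close up: the reduction to \emph{strongly diagonal} antichains and the construction, inside the homogenized strong subtree $T^*$, of an infinite strongly diagonal tree $\bD$ whose terminal nodes code $\mathcal{R}$. This matters in two places. First, the well-definedness of your transferred coloring $c_i$ rests on the claim that a strong subtree $U$ of the right shape envelops a \emph{unique} antichain of type $\tau_i$; that uniqueness is available precisely for strongly diagonal types (terminal nodes of distinct lengths, at most one critical node per level, passing number $0$ at splitting levels for the non-splitting nodes), and it is the reason the paper says ``only the strongly diagonal types matter.'' For arbitrary antichain types the envelope bookkeeping does not give a one-to-one correspondence between strong trees of a fixed height and antichains of that type, so $c_i$ as you define it need not be a coloring of strong subtrees at all. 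Second, at the end you take ``a copy $\mathcal{R}'$ of $\mathcal{R}$ in $T^*$'' arbitrarily; but then the copies of $F$ inside $\mathcal{R}'$ need not be strongly diagonal, hence need not be of any of the types your Milliken applications actually homogenized. The fix is exactly the paper's final step: build $\bD\sse T^*$ strongly diagonal and coding $\mathcal{R}$, so that \emph{every} finite subset of $\bD$ is automatically strongly diagonal and falls under one of the finitely many controlled types; this also yields the lower bound, since each diagonal type persists in every further copy.

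A smaller point: your justification that ``any set of nodes meeting each level of $2^{<\om}$ carries a graph isomorphic to $\mathcal{R}$'' is false as stated (the nodes $0^n$ meet every level and code the empty graph). What is true, and what you need, is that inside any infinite strong subtree one can \emph{choose} an antichain realizing the extension property of the Rado graph; this is the same kind of recursive selection as the construction of $\bD$, so it is best folded into that step rather than asserted for arbitrary level-meeting sets.
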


The following outlines the key steps in Sauer's proof:
  Let $\G$ be a finite graph.
\begin{enumerate}
\item
Trees can code graphs.
\item
There are only finitely many  isomorphism types of trees coding $\mathrm{G}$, and only the strongly diagonal types matter.
\item
For each isomorphism type of tree coding $\G$,
there is a way of  enveloping it into a finite strong tree.
\item
The  coloring on copies of  $\G$ can be transferred to  color  finite strong trees, and
  Milliken's Theorem may be applied to
these `strong tree envelopes' of the trees coding $\G$.
\item
Conclude that there is an infinite  strong subtree of $2^{<\om}$ which includes a code of $\mathcal{R}$ and on which there  is one color per isomorphism type of tree coding $\G$.
\end{enumerate}

Let $s$ and $t$ be nodes in $2^{<\om}$, and suppose $|s|<|t|$.
If $s$ and $t$  represent vertices  $v$ and $w$,
then $s$ and $t$ represent an edge between $v$ and $w$ if and only if $t(|s|)=1$.
Thus, if $t(|s|)=0$, then $s$ and $t$ represent no edge between vertices $v$ and $w$.

Let $\G$ be a graph.
Enumerate the vertices of $\G$ in any order as $\lgl v_n:n<N\rgl$, where $N=|\G|$.
Any collection of nodes $\lgl t_n:n<N\rgl$
in $2^{<\om}$ for which the following hold
is a {\em tree coding $\G$}:
For each  pair $m<n<N$,
\begin{enumerate}
\item
$|t_m|<|t_n|$; and
\item
$t_n(|t_m|)=1\Leftrightarrow v_n\ E\ v_m$.
\end{enumerate}
The integer $t_n(|t_m|)$ is called the {\em passing number of $t_n$ at $t_m$}.

%%%%%%%%%%%%%%%%%%%%%%

\begin{figure}\label{figure.3}
\begin{tikzpicture}[grow'=up,scale=.5]
\tikzstyle{level 1}=[sibling distance=4in]
\tikzstyle{level 2}=[sibling distance=2in]
\tikzstyle{level 3}=[sibling distance=1in]
\tikzstyle{level 4}=[sibling distance=0.5in]
\node {$\left< \ \right >$}
 child{
	child{
		child{edge from parent[draw=none]}
		child{ coordinate (t2)
			}
		}
	child{ coordinate (t1)
		child{
			child{edge from parent[draw=none]}
			child{coordinate (t3)}}
		child{edge from parent[draw=none]} }}
 child{coordinate (t0)
	};
		
\node[right] at (t0) {$t_{0}$};
\node[right] at (t1) {$t_{1}$};
\node[right] at (t2) {$t_{2}$};
\node[right] at (t3) {$t_{3}$};

\node[circle, fill=black,inner sep=0pt, minimum size=6pt] at (t0) {};
\node[circle, fill=black,inner sep=0pt, minimum size=6pt] at (t1) {};
\node[circle, fill=black,inner sep=0pt, minimum size=6pt] at (t2) {};
\node[circle, fill=black,inner sep=0pt, minimum size=6pt] at (t3) {};

\draw[thick, dotted] let \p1=(t1) in (-12,\y1) node (v1) {$\bullet$} -- (7,\y1);
\draw[thick, dotted] let \p1=(t2) in (-12,\y1) node (v2) {$\bullet$} -- (7,\y1);
\draw[thick, dotted] let \p1=(t3) in (-12,\y1) node (v3) {$\bullet$} -- (7,\y1);
\draw[thick, dotted] let \p1=(t0) in (-12,\y1) node (v0) {$\bullet$} -- (7,\y1);

\node[left] at (v1) {$v_1$};
\node[left] at (v2) {$v_2$};
\node[left] at (v3) {$v_3$};
\node[left] at (v0) {$v_0$};

\draw[thick] (v0.center) to (v1.center) to (v2.center) to (v3.center) to [bend left] (v0.center);

\end{tikzpicture}
\caption{A tree coding a 4-cycle}
\end{figure}
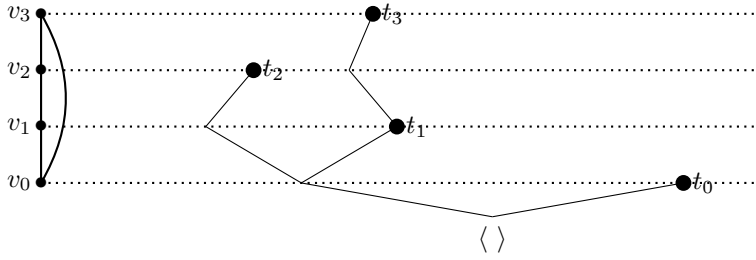

A tree $Z\sse 2^{<\om}$ is {\em strongly diagonal}
if $Z$ is the meet closure of its terminal nodes,
no two terminal nodes of $Z$ have the same length,
for each $l<\om$, there is at most one splitting node or terminal node in $Z$ of length $l$,
and at a splitting node, all other nodes not splitting at that level have passing number $0$.

\begin{defn}[Sauer, \cite{Sauer06}]\label{def.Sauer}
Let $S$ and $T$ be subtrees of $2^{<\om}$.
A function $f:S\ra T$ is a {\em strong similarity} of $S$ to $T$ if for all nodes $s,t,u,v\in S$,
\begin{enumerate}
\item
$f$ is a bijection.
\item
($f$ preserves initial segments)
$s\wedge t\sse u\wedge v$ if and only if $f(s)\wedge f(t)\sse f(u)\wedge f(v)$.
\item
($f$ preserves relative lengths)
$|s\wedge t|<|u\wedge v|$ if and only if
$|f(s)\wedge f(t)|<|f(u)\wedge f(v)|$.
\item
($f$ preserves passing numbers)
If $|u|>|s\wedge t|$,
then $f(u)(|f(s\wedge t)|)=u(|s\wedge t|)$.
\end{enumerate}

\end{defn}
Whenever there is a strong similarity of $S$ to $T$, we call $S$ a {\em copy} of $T$.

Each finite strongly diagonal $X$ tree may be enveloped into a strong tree.
Minimal envelopes will have the same number of levels as the number of meets and maximal nodes in $X$.
The following diagram provides an example of the similarity type of a tree coding an edge, where the
leftmost node is longer than the rightmost.
In this example, an edge is coded
by the nodes $(010)$ and $(0001)$,
since the passing number of the sequence $(0001)$  at length  $3=|(010)|$ is $1$.
There is only one possible envelope for this particular  example of this strong similarity type coding an edge.
\vskip.2in

\begin{minipage}[b]{0.5\textwidth}
\centering
\begin{tikzpicture}[grow'=up,scale=.5]
\tikzstyle{level 1}=[sibling distance=4in]
\tikzstyle{level 2}=[sibling distance=2in]
\tikzstyle{level 3}=[sibling distance=1in]
\tikzstyle{level 4}=[sibling distance=0.5in]
\node {}
child{ coordinate (t0)  edge from parent[draw=none]
child{
child{
child{edge from parent[draw=none]}
child{coordinate (t0001)}}
child{ edge from parent[draw=none]
child{edge from parent[draw=none]}
child{edge from parent[draw=none]}}}
child{
child{ coordinate(t010)
child{edge from parent[draw=none]}
child{edge from parent[draw=none]}}
child{ edge from parent[draw=none]
child{edge from parent[draw=none]}
child{edge from parent[draw=none]}}}}
child{edge from parent[draw=none]
child{edge from parent[draw=none]
child{edge from parent[draw=none]
child{edge from parent[draw=none]}
child{edge from parent[draw=none]}}
child{edge from parent[draw=none]
child{edge from parent[draw=none]}
child{edge from parent[draw=none]}}}
child{edge from parent[draw=none]
child{edge from parent[draw=none]
child{edge from parent[draw=none]}
child{edge from parent[draw=none]}}
child{edge from parent[draw=none]
child{edge from parent[draw=none]}
child{edge from parent[draw=none]}}} };
\node[right] at (t0001) {$(0001)$};
\node[left] at (t0) {$(0)$};
\node[right] at (t010) {$(010)$};
\node[circle, fill=black,inner sep=0pt, minimum size=5pt] at (t010) {};
\node[circle, fill=black,inner sep=0pt, minimum size=5pt] at (t0001) {};
\end{tikzpicture}
%\captionsetup{font=footnotesize}
%\captionof{figure}{A diagonal tree $D$ coding an edge between two vertices}
\end{minipage}
\begin{minipage}[b]{0.5\textwidth}
\centering
\begin{tikzpicture}[grow'=up,scale=.5]
\tikzstyle{level 1}=[sibling distance=4in]
\tikzstyle{level 2}=[sibling distance=2in]
\tikzstyle{level 3}=[sibling distance=1in]
\tikzstyle{level 4}=[sibling distance=0.5in]
\node{}
child{coordinate (t0) edge from parent[draw=none]
child{coordinate (t00)
child{coordinate (t000)
child{coordinate (t0000)edge from parent[color=black]}
child{coordinate (t0001)}}
child{edge from parent[draw=none]
child{edge from parent[draw=none]}
child{edge from parent[draw=none]}}}
child{ coordinate(t01)edge from parent[color=black]
child{ coordinate(t010)edge from parent[color=black]
child{coordinate(t0100)edge from parent[color=black]}
child{coordinate(t0101)edge from parent[color=black]}}
child{ edge from parent[draw=none]
child{edge from parent[draw=none]}
child{edge from parent[draw=none]}}}}
child{edge from parent[draw=none]
child{edge from parent[draw=none]
child{edge from parent[draw=none]
child{edge from parent[draw=none]}
child{edge from parent[draw=none]}}
child{edge from parent[draw=none]
child{edge from parent[draw=none]}
child{edge from parent[draw=none]}}}
child{edge from parent[draw=none]
child{edge from parent[draw=none]
child{edge from parent[draw=none]}
child{edge from parent[draw=none]}}
child{edge from parent[draw=none]
child{edge from parent[draw=none]}
child{edge from parent[draw=none]}}} };
\node[circle, fill=black,inner sep=0pt, minimum size=5pt] at (t010) {};
\node[circle, fill=black,inner sep=0pt, minimum size=5pt] at (t0001) {};
\end{tikzpicture}
%\captionsetup{font=footnotesize}
%\captionof{figure}{The strong tree enveloping $D$}
\end{minipage}

For some strongly diagonal trees, there  can be more than one minimal envelope.  The pair of nodes $(0)$ and $(110)$ have induce a tree with the second strong similarity type of a tree coding an edge.
On the right is one envelope.
\vskip.15in

\begin{minipage}[b]{0.5\textwidth}
\centering
\begin{tikzpicture}[grow'=up,scale=.5]
\tikzstyle{level 1}=[sibling distance=2in]
\tikzstyle{level 2}=[sibling distance=1in]
\tikzstyle{level 3}=[sibling distance=.5in]
\node {$\lgl \rgl$}
child{coordinate (t0)
		}
		child{ coordinate(t1)
			child{ edge from parent[draw=none]}
			child{  coordinate(t11)
child{ coordinate(t110) }
child{ edge from parent[draw=none]}} };
\node[left] at (t0) {$(0)$};
\node[right] at (t110) {$(110)$};
\node[circle, fill=black,inner sep=0pt, minimum size=5pt] at (t0) {};
\node[circle, fill=black,inner sep=0pt, minimum size=5pt] at (t110) {};
\end{tikzpicture}
%\captionsetup{font=footnotesize}
%\captionof{figure}{A diagonal tree $D$ coding an edge between two vertices}
\end{minipage}
\begin{minipage}[b]{0.5\textwidth}
\centering
\begin{tikzpicture}[grow'=up,scale=.5]
\tikzstyle{level 1}=[sibling distance=2in]
\tikzstyle{level 2}=[sibling distance=1in]
\tikzstyle{level 3}=[sibling distance=.5in]
\node {$\lgl \rgl$}
child{coordinate (t0)
			child{coordinate (t00)
child{coordinate (t000)}
child{ edge from parent[draw=none]}}
			child{ coordinate(t01)
child{ edge from parent[draw=none]}
child{ coordinate(t011)}}}
		child{ coordinate(t1)
			child{ coordinate(t10)
child{ edge from parent[draw=none]}
child{ coordinate(t101)}}
			child{  coordinate(t11)
child{ coordinate(t110)}
child{  edge from parent[draw=none]}} };
\node[left] at (t0) {$(0)$};
\node[left] at (t000) {$(000)$};
\node[left] at (t011) {$(011)$};
\node[right] at (t1) {$1$};
\node[left] at (t101) {$(101)$};
\node[right] at (t110) {$(110)$};
\node[circle, fill=black,inner sep=0pt, minimum size=5pt] at (t0) {};
\node[circle, fill=black,inner sep=0pt, minimum size=5pt] at (t110) {};
\end{tikzpicture}
%\captionsetup{font=footnotesize}
%\captionof{figure}{The strong tree enveloping $D$}
\end{minipage}
\vskip.1in

Here are two more envelopes for the same tree induced by the nodes $(0)$ and $(110)$ coding an edge.
\vskip.1in

%%%%%%%%%%%%%%%%%%%%%%%%

\begin{minipage}[b]{0.5\textwidth}
\centering
\begin{tikzpicture}[grow'=up,scale=.5]
\tikzstyle{level 1}=[sibling distance=2in]
\tikzstyle{level 2}=[sibling distance=1in]
\tikzstyle{level 3}=[sibling distance=.5in]
\node {$\lgl \rgl$}
child{coordinate (t0)
			child{coordinate (t00)
child{ edge from parent[draw=none]}
child{ coordinate(t001)}}
			child{ coordinate(t01)
child{  edge from parent[draw=none]}
child{ coordinate(t011)}}}
		child{ coordinate(t1)
			child{ coordinate(t10)
child{ coordinate(t100)}
child{  edge from parent[draw=none]}}
			child{  coordinate(t11)
child{ coordinate(t110)}
child{   edge from parent[draw=none]}} };
\node[left] at (t0) {$(0)$};
\node[left] at (t001) {$(001)$};
\node[left] at (t011) {$(011)$};
\node[right] at (t100) {$(100)$};
\node[right] at (t110) {$(110)$};
\node[circle, fill=black,inner sep=0pt, minimum size=5pt] at (t0) {};
\node[circle, fill=black,inner sep=0pt, minimum size=5pt] at (t110) {};
\end{tikzpicture}
\end{minipage}
\begin{minipage}[b]{0.5\textwidth}
\centering
\begin{tikzpicture}[grow'=up,scale=.5]
\tikzstyle{level 1}=[sibling distance=2in]
\tikzstyle{level 2}=[sibling distance=1in]
\tikzstyle{level 3}=[sibling distance=.5in]
\node {$\lgl \rgl$}
child{coordinate (t0)
			child{coordinate (t00)
child{coordinate (t000)}
child{   edge from parent[draw=none]}}
			child{ coordinate(t01)
child{ coordinate(t010)}
child{   edge from parent[draw=none]}}}
		child{ coordinate(t1)
			child{ coordinate(t10)
child{ coordinate(t100)}
child{  edge from parent[draw=none]}}
			child{  coordinate(t11)
child{ coordinate(t110)}
child{ edge from parent[draw=none]}} };
\node[left] at (t0) {$(0)$};
\node[left] at (t000) {$(000)$};
\node[left] at (t010) {$(010)$};
\node[right] at (t100) {$(100)$};
\node[right] at (t110) {$(110)$};
\node[circle, fill=black,inner sep=0pt, minimum size=5pt] at (t0) {};
\node[circle, fill=black,inner sep=0pt, minimum size=5pt] at (t110) {};
\end{tikzpicture}
\end{minipage}

%%%%%%%%%%%%%%%%%%%%%%%%%%%%
%%%%%%%%%%%%%%%%%%%%%%%%%%%

The point is that given a strong similarity type of a finite  strongly diagonal tree $D$ coding a graph $\G$,
if $k$ is the number of maximal and splitting nodes in  $D$,
then given any strong tree $U$ isomorphic to $2^{\le k}$, there is exactly one copy of $D$ sitting inside of
$X$.
Thus, a coloring on all strongly similar  copies of $D$
inside a strong  tree $S$ may be transfered
to the collection of all finite strong subtrees of $S$ in
$\mathcal{AM}_k$.
Then Milliken's Theorem may be applied to this coloring on all copies of $2^{\le k}$ inside $S$,
obtaining a strong subtree $S'\le S$ in which each copy of $D$ has the same color.

The final step for Sauer's proof is to show that in any infinite strong subtree of $2^{<\om}$, there is an infinite strongly diagonal tree $\bD$ whose terminal nodes code the Rado graph.
Every finite subtree of
$\bD$ will automatically be strongly diagonal, in particular those coding $\G$.
As there are only finitely many strong similarity types of strongly diagonal trees coding a fixed finite graph $\G$,
this provides the upper bound for the big Ramsey degree $n(\G)$.
It also provides the lower bound as
each strongly diagonal type persists in any subtree of $\bD$ which codes the Rado graph.
\vskip.1in

Sauer's Theorem \ref{thm.Sauer} was recently applied to
obatin the following.

\begin{thm}[Dobrinen, Laflamme, Sauer, \cite{Dobrinen/Laflamme/Sauer16}]\label{thm.DLS}
 The Rado graph has the rainbow Ramsey property.
\end{thm}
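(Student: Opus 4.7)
The plan is to derive the rainbow Ramsey property of the Rado graph from Sauer's finite big Ramsey degrees (Theorem \ref{thm.Sauer}) together with a rainbow Ramsey theorem for strong subtrees of $2^{<\om}$. Recall that $\mathcal{R}$ has the \emph{rainbow Ramsey property} if, for every finite graph $\G$ and every positive integer $k$, every coloring $c$ of the copies of $\G$ in $\mathcal{R}$ whose color classes each have size at most $k$ admits a subcopy $\mathcal{R}'\cong\mathcal{R}$ on which $c$ is one-to-one.

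Using Sauer's tree coding, the copies of $\G$ in $\mathcal{R}$ correspond to finite strongly diagonal subtrees of $\bD\sse 2^{<\om}$ of one of the finitely many strong similarity types $\tau_1,\dots,\tau_m$ coding $\G$. Each copy of type $\tau_j$ sits uniquely inside its minimal strong tree envelope in some $\mathcal{AM}_{k_j}$, so a coloring $c$ of copies of $\G$ with color classes bounded by $k$ transfers, for each $j$, to a bounded coloring of $\mathcal{AM}_{k_j}$-copies in $\bD$. The task therefore reduces to a \emph{rainbow Milliken theorem}: for every $k,\ell<\om$, every $T\in\mathcal{M}$, and every coloring $d$ of level-$\ell$ strong subtrees of $T$ with color classes of size at most $k$, there is $S\le T$ in $\mathcal{M}$ on which $d$ is injective.

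To prove the rainbow Milliken theorem I would establish a canonical Ramsey theorem for the Milliken space, analogous to the \Erdos--Rado canonical theorem for $[\om]^n$: every coloring of level-$\ell$ strong subtrees of $T$ into arbitrarily many colors becomes, on some $S\le T$, determined by a specific canonical equivalence relation drawn from a finite list (e.g.\ equivalence modulo forgetting certain branches or nonsplitting coordinates, in analogy with min/max dependence). When the color classes of the original coloring are bounded, every non-identity canonical equivalence forces some infinite equivalence class and hence an unbounded color class, so the only compatible canonical form is the identity, yielding a rainbow $S$. Iterating this rainbow Milliken principle over the finitely many types $\tau_1,\dots,\tau_m$, and extracting inside the resulting strong subtree a strongly diagonal tree coding $\mathcal{R}$ (exactly as in the final step of Sauer's construction), would produce the required rainbow copy $\mathcal{R}'$.

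I expect the main obstacle to be the canonical Ramsey theorem for strong trees. Unlike the finite-dimensional \Erdos--Rado case, the tree structure admits a much richer variety of canonical projections, and assembling the complete list of canonical equivalences on $\mathcal{AM}_\ell$-copies, together with showing that the list is closed under Milliken's pigeonhole (Lemma \ref{lem.A4Milliken}), requires a careful structural analysis of how $\ell$-level copies decompose under passage to strong subtrees. Once this canonical classification is in place, the deduction of rainbow Ramsey for $\mathcal{R}$ from the bounded-color-class hypothesis is essentially formal.
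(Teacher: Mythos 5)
The strategy of deducing the rainbow property from a canonization result (any non-identity canonical equivalence relation has an infinite class, contradicting $k$-boundedness) is sound in outline, and it is close in spirit to how \cite{Dobrinen/Laflamme/Sauer16} actually proceeds; note that the present paper only states Theorem \ref{thm.DLS} with a citation and does not reprove it. However, your reduction has a genuine gap at the transfer step. A fixed strongly diagonal copy $X$ of $\G$ of type $\tau_j$ does not sit inside a unique minimal envelope: as illustrated in Section \ref{sec.Sauer} for the pair $(0),(110)$, a single diagonal set typically has several minimal strong-tree envelopes, and the number of envelopes of a copy living at levels $l_1<\dots<l_{k_j}$ grows without bound as those levels increase. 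Consequently the induced coloring on $\mathcal{AM}_{k_j}$ is \emph{not} $k$-bounded for any $k$ (each color class is inflated by the unbounded envelope multiplicity), so your ``rainbow Milliken theorem'' cannot be applied to it; moreover no strong subtree can make the induced coloring injective, since all envelopes of one copy are forced to share a color, so ``the only compatible canonical form is the identity'' fails at the envelope level. The canonization must therefore be carried out on the diagonal antichains themselves --- this is essentially the canonical partition theorem for the Rado graph due to Laflamme, Sauer, and Vuksanovic, which is the real engine behind Theorem \ref{thm.DLS} --- and that is itself a substantial theorem that your proposal leaves entirely open rather than a routine adaptation of \Erdos--Rado canonization.

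Second, even granting injectivity within each similarity type $\tau_j$, your conclusion falls short of the rainbow property: a color could still be shared by copies of $\G$ realizing different types $\tau_i\ne\tau_j$, so a single color could appear up to $m$ times in $\mathcal{R}'$, whereas the definition requires it to appear at most once. You need an additional step treating pairs of copies of distinct (and identical) types --- for instance, coloring the finitely many similarity types of pairs of copies by ``same $c$-color / different $c$-color,'' applying Milliken's Theorem \ref{thm.M} to their joint envelopes, and showing that a ``same''-homogeneous pair type forces more than $k$ copies into one color class. That last point requires verifying that such a pair type can actually be realized by $k+1$ copies pairwise in the given configuration, which is where much of the combinatorial work lies and which is absent from the proposal.
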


This means that given any $2\le k<\om$, a finite graph $\G$, and any coloring of all copies of $\G$ in $\mathcal{R}$ into $\om$ many colors, where each color appears at most $k$ times,
then
there is a copy $\mathcal{R}'$ of $\mathcal{R}$ in which each color on a copy of $\G$ appears at most once.
This result extends, with not much more work, to the larger class of binary relational simple structures.
These include the random directed graph and the random tournament.
It is well-known that the natural numbers have the rainbow Ramsey property, as the proof follows from the Ramsey property.
On the other hand, Theorem \ref{thm.Sauer}
 shows that  the Rado graph does not have the Ramsey property when the subgraph being colored has more than one vertex.
However, the finite Ramsey degrees provides  enough strength to still deduce the rainbow Ramsey property for the Rado graph.

We close this section with  two applications of Milliken's Theorem   to colorings of finite subsets of the rationals.
The rational numbers $\bQ=(Q,\le_Q)$
is up to isomorphism the countable dense linear order without endpoints.
The nodes in the tree $2^{<\om}$ may be given the following linear ordering so as to produce a copy of the rationals.

\begin{defn}\label{def.Devlin}
Let $<_Q$ be the order on $2^{<\om}$ defined as follows.  For $s$ and $t$ incomparable, $s<_Q t$ if and only if $s<_{\mathrm{lex}} t$.
If $s\subset t$,
then $s<_Q t$ if and only if $t(|s|)=1$;
and $t<_Q s$ if and only if $t(|s|)=0$.
\end{defn}

With this ordering, $(2^{<\om},<Q)$ is order isomorphic to the rationals.
Then Milliken's Theorem may be applied to deduce Ramsey properties of the rationals.
We do not go into detail on this, but refer the interested reader to \cite{TodorcevicBK10} for further exposition.
The numbers $t_d$ are the {\em tangent numbers}.

\begin{thm}[Devlin, \cite{DevlinThesis}]\label{thm.DevlinThesis}
Let $d$ be a positive integer.
For each coloring of $[\mathbb{Q}]^d$ into finitely many colors, there is a subset $Q'\sse\mathbb{Q}$ which is order-isomorphic to $\mathbb{Q}$ and such that $[Q']^d$ uses at most $t_d$ colors.
There is a coloring of $[\mathbb{Q}]^d$ with $t_d$ colors, none of which can be avoided by going to an order-isomorphic copy of $\mathbb{Q}$.
\end{thm}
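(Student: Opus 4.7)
The plan is to mimic Sauer's strategy from Theorem \ref{thm.Sauer}, replacing ``tree coding a graph'' with ``tree coding a finite subset of $\mathbb{Q}$'' under the identification of $\mathbb{Q}$ with $(2^{<\om},<_Q)$ from Definition \ref{def.Devlin}. First, I would fix an order-isomorphism between $\mathbb{Q}$ and $(2^{<\om},<_Q)$, so that a coloring of $[\mathbb{Q}]^d$ becomes a coloring of $d$-element antichains-or-chains in $2^{<\om}$ viewed under $<_Q$. Given any $X\in [2^{<\om}]^d$, let $\tau(X)$ be the strong similarity type of the meet-closure of $X$, together with the marking of which nodes of this finite tree are the distinguished $d$ elements. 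Since strong similarities preserve meets, relative lengths, and passing numbers (Definition \ref{def.Sauer}), and since $<_Q$ is defined purely from these data, strongly similar copies of $X$ carry the same $<_Q$-order. The crucial combinatorial input is that there are only finitely many such types on $d$ elements, and one counts exactly $t_d$ of them (the tangent numbers, arising from the enumeration of alternating permutations, equivalently of binary increasing trees on $d$ nodes equipped with the $<_Q$-ordering data).

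Next, I would invoke Milliken's Theorem \ref{thm.M} to reduce to one color per similarity type. Each finite strongly diagonal tree $D$ with $d$ distinguished nodes has a canonical enveloping strong tree $E(D) \in \mathcal{AM}_k$, where $k$ is the number of splitting nodes plus maximal nodes in $D$; moreover, inside any strong tree isomorphic to $2^{\le k}$, there is exactly one copy of $D$. Thus the coloring $c$ on all copies of $D$ in $2^{<\om}$ transfers to a coloring $c^*$ of $\mathcal{AM}_k$-trees, and Milliken yields an infinite strong subtree $S \le 2^{<\om}$ on which all copies of $D$ receive a single color $\chi(\tau)$. Iterating Milliken once for each of the finitely many similarity types $\tau$ (of which there are $t_d$), I obtain a single infinite strong subtree $S^*\le 2^{<\om}$ in which, for every type $\tau$, every copy of $\tau$ in $S^*$ has color $\chi(\tau)$. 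Hence the coloring on $d$-subsets of $S^*$ uses at most $t_d$ colors.

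The step that needs care is then producing an order-isomorphic copy of $\mathbb{Q}$ inside $S^*$. Since $S^*$ is an infinite strong subtree of $2^{<\om}$, it is itself strongly similar to $2^{<\om}$, and the strong similarity preserves $<_Q$ on its nodes. Therefore $(S^*,<_Q)$ is again a countable dense linear order without endpoints, isomorphic to $\mathbb{Q}$; setting $Q'$ to be its image under the fixed identification gives the desired subcopy. This completes the upper bound.

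For the lower bound, I would show that the ``similarity type'' coloring $X \mapsto \tau(X)$ itself cannot be reduced below $t_d$ colors. Given any $Q'\subseteq \mathbb{Q}$ order-isomorphic to $\mathbb{Q}$, its image in $2^{<\om}$ is a countable dense subset whose meet-closure contains, for each of the $t_d$ similarity types $\tau$, a copy of $\tau$: this follows because $\mathbb{Q}$-copies are rich enough to realize every finite order-configuration, and strong-similarity types refine order types only in ways that are always attainable within a dense copy. Thus each of the $t_d$ types is realized on $[Q']^d$, so at least $t_d$ colors appear. The main obstacle I anticipate is the bookkeeping for the enveloping step and the verification that the counting of similarity types genuinely gives $t_d$; both are combinatorial facts that are treated in detail in Chapter 6 of \cite{TodorcevicBK10}, and I would refer there rather than reproduce the computation.
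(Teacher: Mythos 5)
Your overall strategy is the right one (the paper itself gives no proof of Devlin's theorem, only pointing to Milliken's Theorem, the ordering $<_Q$, and \cite{TodorcevicBK10}), but there is a genuine gap in your final step for the upper bound. After homogenizing with Milliken you take $Q'$ to be the set of \emph{all} nodes of the strong subtree $S^*$. The count $t_d$ is the number of strong similarity types of \emph{strongly diagonal} $d$-configurations; the full node set of $S^*$ realizes many more types than that: comparable pairs $s\subset t$, pairs of nodes of the same length, $d$-sets whose meet closure has two meets at the same level, $d$-sets containing a splitting node of another member's meet, etc. Each of these extra types is (after your iteration of Milliken) monochromatic, but they need not share colors with the diagonal types, so the similarity-type coloring itself already forces $[S^*]^d$ to use strictly more than $t_d$ colors. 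The missing step is exactly the analogue of what the paper describes as the final step of Sauer's argument for the Rado graph: inside $S^*$ one must construct a strongly diagonal antichain $A$ (no two elements or meets at the same length, at most one splitting node per level, passing numbers $0$ off the splitting node) which is still dense in itself and without endpoints under $<_Q$, hence order-isomorphic to $\mathbb{Q}$. Every $d$-subset of such an $A$ is automatically strongly diagonal, so only the $t_d$ diagonal types occur on $[A]^d$, and taking $Q'=A$ gives the bound. Without this diagonalization the upper bound fails.

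Two smaller points. First, your lower-bound paragraph asserts rather than proves that every order-isomorphic copy of $\mathbb{Q}$ realizes all $t_d$ diagonal types; this persistence claim is the substantive half of showing $t_d$ is optimal and requires an argument (one shows that the image of any such copy in $2^{<\om}$ contains, for each type, a diagonal $d$-set of that type, essentially by re-running the diagonal construction inside the meet closure of the image). Second, when you transfer the coloring of copies of a diagonal tree $D$ to a coloring of $\mathcal{AM}_k$ via envelopes, you should note that distinct types have envelopes of different sizes $k$, so the finitely many applications of Milliken must be fused along a single decreasing sequence of strong subtrees; this is routine but worth saying.
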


Milliken's Theorem along with the ordering  $(2^{<\om},<_Q)$ isomorphic to the rationals  were used to obtain the big Ramsey degrees for finite products of the rationals.

\begin{thm}[Laver, \cite{Laver84}]\label{thm.Laver}
Let $d$ be a positive integer
For each coloring of $\mathbb{Q}^d$, the product of $d$ many copies of the rationals, into finitely many colors,
there are subsets $Q_i\sse\mathbb{Q}$, $i<d$, also forming sets of rationals, such that
no more than $(d+1)!$ colors occur on $\prod_{i<d}Q_i$.
Moreover, $(d+1)!$ is optimal.
\end{thm}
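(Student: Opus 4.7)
The strategy is to lift the coloring to a coloring of configurations inside $2^{<\om}$ via the coding $(2^{<\om},<_Q)\cong\bQ$ from Definition \ref{def.Devlin}, and then reduce to Milliken's Theorem \ref{thm.M} by the envelope technique used for the Rado graph in Section \ref{sec.Sauer}.

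First, I would pick an infinite strong subtree $T\le 2^{<\om}$ of $2^{<\om}$ and locate, for each $i<d$, a strongly diagonal subtree $D_i\sse T$ whose terminal nodes, ordered by $<_Q$, form a copy $Q_i$ of $\bQ$, and such that the $D_i$ sit in pairwise disjoint cones above distinct immediate successors of a common stem; this guarantees that any tuple $(q_0,\dots,q_{d-1})\in\prod_{i<d}Q_i$ gives nodes $t_0,\dots,t_{d-1}$ in $2^{<\om}$ whose meet-closure has a controlled skeleton. Call the \emph{type} of such a tuple the strong similarity type (in the sense of Definition \ref{def.Sauer}) of the meet closure of $\{t_0,\dots,t_{d-1}\}$, together with the bijection $i\mapsto t_i$. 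Each such type $\tau$ admits a finite strong-tree \emph{envelope} $E_\tau\in\mathcal{AM}_{k(\tau)}$ (of size $k(\tau)$ equal to the number of meets plus maximal nodes of $\tau$) inside which $\tau$ sits in only one way.

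Next, given a coloring $c:\bQ^d\to r$, transfer it to a coloring $\tilde c$ on the relevant sets of strong-tree envelopes: for each type $\tau$ realized inside $D_0\times\cdots\times D_{d-1}$, define $\tilde c_\tau$ on $r_{k(\tau)}[T]$ by coloring a finite strong subtree $U$ with the color $c$ assigns to the unique $\tau$-configuration it contains. Apply Milliken's Theorem \ref{thm.M} once per type (finitely many times) to produce an infinite strong subtree $T^*\le T$ such that inside $T^*$ every configuration of a given type receives a single color. Thinning $D_i$ to $D_i^*:=D_i\cap T^*$ and letting $Q_i^*$ be the copy of $\bQ$ coded by $D_i^*$, the coloring $c$ restricted to $\prod_{i<d}Q_i^*$ depends only on the type. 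The number of colors used is therefore bounded by the number of types realized by product-tuples in the diagonal configuration I arranged.

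The main combinatorial work — and the step I expect to be the real obstacle — is to prove that there are exactly $(d+1)!$ such types. The upper bound comes from a careful enumeration: a type is determined by the linear order of levels at which the $d$ nodes and their $d-1$ meets appear, together with the passing-number pattern, all consistent with each $t_i$ being in its designated cone. A bijective encoding of these types by permutations of $d+1$ objects (placing the meets and nodes into a single linear order and accounting for the free choice of passing number at the youngest meet) yields exactly $(d+1)!$. For optimality, I would exhibit a coloring that assigns distinct colors to representatives of each of the $(d+1)!$ types and verify that inside any $\prod_{i<d}Q_i'$ with each $Q_i'$ order-isomorphic to $\bQ$, every type must still be realized, because the density of $\bQ$ forces every type to appear in any product of rational copies. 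This persistence argument, analogous to the lower-bound argument for Sauer's Theorem \ref{thm.Sauer}, will conclude the proof.
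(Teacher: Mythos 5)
The paper itself gives no proof of Theorem \ref{thm.Laver}; it only cites Laver \cite{Laver84} and remarks that the result is obtained from Milliken's Theorem together with the ordering $(2^{<\om},<_Q)$. Your skeleton --- code $\mathbb{Q}$ by $<_Q$, classify tuples by strong similarity type, envelope, and apply Milliken once per type --- is the right general framework and matches that one-sentence description. The problems are in the two places where the actual content of the theorem lives.

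First, the count. In the configuration you set up (mutually diagonal antichains $D_i$ lying in pairwise disjoint cones --- note, incidentally, that a node of $2^{<\om}$ has only two immediate successors, so for $d>2$ the cone roots cannot all be immediate successors of a single stem), every pairwise meet $t_i\wedge t_j$ is a fixed node below the cone roots. Hence the strong similarity type of a tuple is determined by the linear order of the lengths $|t_0|,\dots,|t_{d-1}|$ together with the passing numbers $t_i(|t_j|)$ for $|t_j|<|t_i|$, giving $d!\cdot 2^{\binom{d}{2}}$ types: $4$ for $d=2$ and $48$ for $d=3$, not $(d+1)!=6$ and $24$. So the asserted ``bijective encoding by permutations of $d+1$ objects'' cannot be a count of the types your configuration realizes, and identifying exactly which types can be arranged to occur and which must occur is the real content of the theorem; it is missing. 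Second, the optimality argument fails as stated: it is not true that ``the density of $\mathbb{Q}$ forces every type to appear in any product of rational copies.'' If the $Q_i'$ are chosen inside pairwise disjoint intervals of $\mathbb{Q}$, then for incomparable nodes the $<_Q$-order (equivalently the lexicographic order) between coordinates from different factors is constant, so every type carrying the opposite lexicographic pattern is never realized. The lower bound requires isolating the types that persist in \emph{every} choice of the $Q_i'$ and showing their number matches the upper bound; this is delicate and not a consequence of density. Finally, a repairable but genuine error: $D_i^*:=D_i\cap T^*$ need not code a copy of $\mathbb{Q}$, since an arbitrary strong subtree $T^*$ can miss $D_i$ entirely; after applying Milliken one must construct fresh diagonal antichains inside $T^*$ and verify that the configurations they generate fall among the types already homogenized.
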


%%%%%%%%%%%%%%%%%%%%
%%%%%%%%%%%%%%%%%%%%
%%%%%%%%%%%%%%%%%%%%
%%%%%%%%%%%%%%%%%%%%
%%%%%%%%%%%%%%%%%%%%

\section{The universal triangle-free graph and Ramsey theory for strong coding trees}\label{sec.H_3}

The problem of whether or not countable homogeneous structures omitting a certain type of substructure can have finite big Ramsey degrees is largely open.
The simplest homogeneous   relational structure  omitting a type
is the universal triangle-free graph.
A graph $\G$ is {\em triangle-free} if  for any three vertices in $\G$, at least one pair  has no edge between them.
The {\em universal triangle-free graph} is the triangle-free graph on $\om$ many vertices  into which every other  triangle-free graph on countably many vertices embeds.
This is the analogue of the Rado graph for triangle-free graphs.
The first universal triangle-free graph was constructed by Henson in \cite{Henson71}, which
 we denote  as $\mathcal{H}_3$, in which
 he proved that any two
 countable universal  triangle-free graphs are isomorphic.

There are several equivalent characterizations  of $\mathcal{H}_3$.
Let
$\mathcal{K}_3$ denote  the \Fraisse\ class of all finite  triangle-free graphs.
We say that a triangle-free graph
$\mathcal{H}$ is {\em homogeneous}  for $\mathcal{K}_3$ if  any isomorphism between two finite subgraphs of $\mathcal{H}$ can be extended to an automorphism of $\mathcal{H}$.

\begin{thm}[Henson, \cite{Henson71}]\label{thm.Henson}
Let $\mathcal{H}$ be a triangle-free graph on $\om$ many vertices.
The following are equivalent.
\begin{enumerate}
\item
$\mathcal{H}$ is universal for countable triangle-free graphs.
\item
$\mathcal{H}$ is the \Fraisse\ limit of  $\mathcal{K}_3$.
\item
$\mathcal{H}$ is homogeneous for $\mathcal{K}_3$.
\end{enumerate}
\end{thm}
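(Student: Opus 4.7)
The plan is to apply the standard Fra\"iss\'e-theoretic machinery. First I would verify that $\mathcal{K}_3$ is a Fra\"iss\'e class, which produces a unique (up to isomorphism) countable ultrahomogeneous graph $F$ with $\mathrm{Age}(F)=\mathcal{K}_3$. Then I would establish the equivalences via the cycle (2) $\Rightarrow$ (3) $\Rightarrow$ (1) $\Rightarrow$ (2).

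To show $\mathcal{K}_3$ is a Fra\"iss\'e class, the hereditary property is immediate (a subgraph of a triangle-free graph is triangle-free) and joint embedding is witnessed by disjoint union. For amalgamation, I would use the free amalgam: given $A \hookrightarrow B$ and $A \hookrightarrow C$ in $\mathcal{K}_3$, glue $B$ and $C$ along $A$ with no new edges between $B\setminus A$ and $C\setminus A$. Any hypothetical triangle in the amalgam must use vertices in both sides outside $A$ (since triangles internal to $B$ or $C$ are already excluded), but these are non-adjacent in the free amalgam. Fra\"iss\'e's theorem then yields the desired limit $F$.

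For (2) $\Rightarrow$ (3) I would simply invoke that Fra\"iss\'e limits are ultrahomogeneous by construction. For (3) $\Rightarrow$ (1), given a countable triangle-free graph $G$ enumerated as $g_0,g_1,\dots$, I would inductively build an embedding $\phi:G\to\mathcal{H}$: at stage $n$, use that $G\restriction\{g_0,\dots,g_n\}\in\mathcal{K}_3=\mathrm{Age}(\mathcal{H})$ to fix \emph{some} embedding $\psi$ into $\mathcal{H}$, then apply homogeneity to obtain an automorphism $\alpha$ of $\mathcal{H}$ that aligns $\psi\restriction\{g_0,\dots,g_{n-1}\}$ with $\phi\restriction\{g_0,\dots,g_{n-1}\}$, and set $\phi(g_n)=\alpha(\psi(g_n))$.

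The main obstacle is (1) $\Rightarrow$ (2). The strategy is to extract from mere universality the \emph{extension property}: for every finite independent $U\subseteq V(\mathcal{H})$ and finite disjoint $V\subseteq V(\mathcal{H})$, there exists a vertex of $\mathcal{H}\setminus(U\cup V)$ adjacent to all of $U$ and to none of $V$. To prove this, I would form the countable triangle-free graph $\mathcal{H}^+$ obtained by adjoining to $\mathcal{H}$ a new vertex $w$ adjacent exactly to $U$, which is triangle-free precisely because $U$ is independent (this is the only place the triangle-free hypothesis is essential). By universality, $\mathcal{H}^+$ embeds into $\mathcal{H}$. The delicate point is that such an embedding need not fix $U\cup V$ pointwise, so the witness it produces is in the wrong place; the cure is to interleave this extraction with a back-and-forth construction against $F$, invoking universality at each step to realise a one-vertex extension and then transporting the witness back via the partial isomorphism built so far. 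Once the extension property is in hand, a standard back-and-forth between $\mathcal{H}$ and $F$ yields $\mathcal{H}\cong F$, closing the cycle.
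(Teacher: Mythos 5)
The paper offers no proof of this statement; it is quoted from Henson's 1971 paper, so there is nothing to compare your argument against and I evaluate it on its own terms. Your treatment of (2) $\Rightarrow$ (3) $\Rightarrow$ (1) is fine: free amalgamation for $\mathcal{K}_3$ is correct (a triangle in the free amalgam would need a vertex on each side outside the base, and those are non-adjacent), and the ``forth'' argument for (3) $\Rightarrow$ (1) is standard. One caveat: you silently invoke $\mathrm{Age}(\mathcal{H})=\mathcal{K}_3$ there, which the paper's literal definition of ``homogeneous for $\mathcal{K}_3$'' (isomorphisms of finite subgraphs extend to automorphisms) does not supply --- an infinite independent set satisfies that definition and embeds almost nothing. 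You should state explicitly that (3) is to be read as ``homogeneous with age exactly $\mathcal{K}_3$,'' which is the standard and clearly intended meaning.

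The genuine gap is (1) $\Rightarrow$ (2), and it is not a reparable defect of your write-up: bare universality does not imply the extension property, so no interleaving or witness-transporting scheme can extract it. Concretely, let $\mathcal{H}=\mathcal{H}_3\sqcup K_1$, the Henson graph together with one isolated vertex $w$. This is countable, triangle-free, and universal (every countable triangle-free graph already embeds into the $\mathcal{H}_3$ part), yet it fails the extension property for $U=\{w\}$, $V=\emptyset$: no vertex is adjacent to $w$, whereas every vertex of the Fra\"iss\'e limit has a neighbour. Hence this $\mathcal{H}$ satisfies (1) but not (2), and the implication as literally stated is false; the same phenomenon is why universality alone never pins down a Fra\"iss\'e limit (compare the Rado graph plus an isolated vertex). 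Your own diagnosis of the ``delicate point'' is exactly where the argument dies: the embedding of $\mathcal{H}^+$ into $\mathcal{H}$ supplied by universality may move $U\cup V$ to an entirely different finite set, and without homogeneity --- which is what you are trying to prove --- there is no automorphism available to move the witness back. To get a correct cycle you should take (1) to mean Henson's property (for every finite independent $U$ and finite disjoint $V$ there is a vertex joined to all of $U$ and none of $V$), prove that this property is equivalent to (2) and (3) by back-and-forth, and derive universality from it as a consequence rather than using universality as a hypothesis.
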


In 1971, Henson proved  in \cite{Henson71}  that for any coloring of the vertices of $\mathcal{H}_3$ into two colors, there is either a subgraph $\mathcal{H}'\le \mathcal{H}_3$ with all vertices in the first color, and which is isomorphic to $\mathcal{H}_3$;
or else there is an infinite subgraph $\mathcal{H}'\le\mathcal{H}_3$ in which all the vertices have the second color and  into which each member of $\mathcal{K}_3$ embeds.
Fifteen years later, Komj\'{a}th and \Rodl\ proved that vertex colorings of  $\mathcal{H}_3$  have the Ramsey property.

\begin{thm}[Komj\'{a}th/\Rodl, \cite{Komjath/Rodl86}]\label{thm.KR}
For any finite coloring of the vertices $|\mathcal{H}_3|$,
there is an $\mathcal{H}\in\binom{\mathcal{H}_3}{\mathcal{H}_3}$ such that $|\mathcal{H}|$ has one color.
\end{thm}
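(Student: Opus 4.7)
My plan is to deduce Komj\'{a}th--\Rodl\ from the \Nesetril--\Rodl\ structural Ramsey theorem for the class $\mathcal{K}_3$ of finite triangle-free graphs, together with a back-and-forth amalgamation that exploits the homogeneity of $\mathcal{H}_3$ furnished by Theorem \ref{thm.Henson}. First I reduce to two colors by induction on the number of colors $k$: for $k\ge 3$, partition the colors as $\{0\}$ versus $\{1,\dots,k-1\}$, apply the two-color case to obtain a monochromatic induced copy $\mathcal{H}'\cong\mathcal{H}_3$; if the copy is of color $0$ we are done, otherwise we restrict $c$ to $\mathcal{H}'$ and finish by the inductive hypothesis for $k-1$ colors.

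For the two-color case $c:|\mathcal{H}_3|\to\{0,1\}$, I would invoke the \Nesetril--\Rodl\ Ramsey theorem for $\mathcal{K}_3$: for every $F\in\mathcal{K}_3$ and every $k$ there exists $G\in\mathcal{K}_3$ with $G\to (F)^1_k$, meaning every $k$-coloring of the vertices of $G$ admits a monochromatic induced copy of $F$. Fix an exhaustion $F_0\sse F_1\sse \dots$ of $\mathcal{H}_3$ by finite triangle-free subgraphs. For each $n$, embed a Ramsey witness $G_n$ for $F_n$ as an induced subgraph of $\mathcal{H}_3$; restricting $c$ produces a monochromatic induced copy of $F_n$ in $\mathcal{H}_3$ of some color $\varepsilon_n\in\{0,1\}$. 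Pigeonhole then yields a single color $\varepsilon^*$ for which arbitrarily large finite triangle-free graphs embed into $c^{-1}(\varepsilon^*)$ as induced subgraphs of $\mathcal{H}_3$.

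The remaining step is to upgrade this abundance of finite monochromatic copies to a single induced copy of $\mathcal{H}_3$ inside $c^{-1}(\varepsilon^*)$. I would do this by a back-and-forth construction that enumerates all finite one-point extension tasks of the form ``given a finite triangle-free graph $A$ already built and an independent $B\sse A$, adjoin a new vertex adjacent to exactly the vertices of $B$,'' realizing each task by a color-$\varepsilon^*$ vertex drawn from $\mathcal{H}_3$. The resulting limit is a countable triangle-free graph satisfying the \Fraisse\ extension property for $\mathcal{K}_3$, hence isomorphic to $\mathcal{H}_3$ by Theorem \ref{thm.Henson}.

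\textbf{Main obstacle.} The hardest point is the transport in the back-and-forth: the monochromatic copy handed out by \Nesetril--\Rodl\ is embedded in an arbitrary position in $\mathcal{H}_3$, and one needs it to \emph{properly extend} the copy of $A$ that has already been constructed, without being able to rely on the extension property inside a single color class as one would in a naive argument. To surmount this I would iterate the structural Ramsey theorem applied to an expansion of $\mathcal{K}_3$ by a distinguished subcopy of $A$, so that the monochromatic Ramsey witness already realizes the desired extension type over $A$; then I can slide the witness back into place by an automorphism of $\mathcal{H}_3$ (using homogeneity) without altering its color. Running this argument coherently requires maintaining, inductively through the back-and-forth, a ``persistent'' color-$\varepsilon^*$ realization for every one-point extension type over the current finite approximation---and it is precisely the need to preserve this persistence under shrinking to a sub-copy that drives the Ramsey-theoretic content of the proof.
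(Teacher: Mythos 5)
The paper does not actually prove Theorem \ref{thm.KR}; it is quoted from Komj\'{a}th--\Rodl, so your argument can only be judged on its own terms, and it has a genuine gap at exactly the step that carries all the weight. Your first three moves (reduction to two colors, the finite vertex Ramsey property of $\mathcal{K}_3$, and the pigeonhole over an exhaustion) deliver precisely this: there is a color $\varepsilon^*$ such that every finite triangle-free graph embeds as an induced subgraph of $\mathcal{H}_3$ inside $c^{-1}(\varepsilon^*)$. But that is essentially Henson's 1971 result, which the paper states immediately before Theorem \ref{thm.KR} and explicitly contrasts with it: Henson showed that one color class either contains a copy of $\mathcal{H}_3$ or is merely universal for $\mathcal{K}_3$, and the entire content of Komj\'{a}th--\Rodl\ is the upgrade from ``contains every finite triangle-free graph'' to ``contains an induced copy of $\mathcal{H}_3$.'' The upgrade is not automatic: the \Fraisse\ extension property must be realized over the \emph{specific} finite sets of vertices you have already committed to, and the adversary may color every vertex of $\mathcal{H}_3$ realizing a given one-point extension type over your current $A$ with the other color, while still leaving color $\varepsilon^*$ universal for $\mathcal{K}_3$.

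Your proposed fix is where the argument breaks. You cannot ``slide the witness back into place by an automorphism of $\mathcal{H}_3$ \dots\ without altering its color'': the coloring $c$ is an arbitrary function on the vertices and is not invariant under automorphisms, so the image of a monochromatic set under an automorphism need not be monochromatic. Homogeneity gives you no control over colors after moving a witness. The ``persistence'' invariant you name at the end --- a color-$\varepsilon^*$ realization of every one-point extension type over the current finite approximation, preserved as the approximation grows --- is indeed what a correct proof must maintain, but your sketch never establishes it, and the finite Ramsey theorem for abstract copies of $F_n$ cannot establish it, because it says nothing about extensions over a fixed base sitting inside $\mathcal{H}_3$. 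This is the actual difficulty that makes Theorem \ref{thm.KR} a theorem (and kept it open for fifteen years after Henson); as written, your proof does not surmount it.
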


The next question was whether
finite colorings of edges in $\mathcal{H}_3$ could be
 reduced to one color on a copy of $\mathcal{H}_3$.
In 1988, Sauer proved that this was impossible.

\begin{thm}[Sauer, \cite{Sauer98}]\label{thm.SauerH_3}
For any finite coloring of the edges in $\mathcal{H}_3$, there is an
 $\mathcal{H}\in\binom{\mathcal{H}_3}{\mathcal{H}_3}$ such that the edges in $\mathcal{H}$ take on no more than two colors.
Furthermore, there is a coloring on the edges in $\mathcal{H}_3$ into two colors such that every universal triangle-free subgraph of $\mathcal{H}_3$ has edges of both colors.
\end{thm}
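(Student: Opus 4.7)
The strategy is to prove the upper bound (every finite edge-coloring of $\mathcal{H}_3$ admits a universal triangle-free subgraph on which at most two colors appear) by an adaptation of Sauer's tree-coding argument from Theorem~\ref{thm.Sauer}, and the lower bound (an explicit two-coloring that cannot be reduced to one color on any universal triangle-free subgraph) by exhibiting a coloring whose two values both persist under the \Fraisse\ extension property of $\mathcal{H}_3$.

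For the lower bound, the plan is to fix an enumeration $\langle v_n:n<\om\rangle$ of the vertices of $\mathcal{H}_3$ and define a $2$-coloring $c$ of the edges by a local structural invariant: for an edge $\{v_i,v_j\}$ with $i<j$, set $c(\{v_i,v_j\})=0$ if $v_j$ has a neighbor in $\{v_k:k<i\}$, and $c(\{v_i,v_j\})=1$ otherwise. To verify that both colors are unavoidable in any $\mathcal{H}\in\binom{\mathcal{H}_3}{\mathcal{H}_3}$, I would invoke the extension property: the single-edge graph extends, inside $\mathcal{H}$, both to a pendant configuration whose later endpoint has no earlier neighbor in $\mathcal{H}_3$ (giving color $1$) and to a path of length two configured so that the middle vertex is later than one endpoint but earlier than the other (giving color $0$). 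Both extensions are realizable in any copy of $\mathcal{H}_3$, so both colors occur on edges of $\mathcal{H}$.

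For the upper bound, the plan is to code $\mathcal{H}_3$ as a family of paths in a distinguished subtree $\bT\subseteq 2^{<\om}$ subject to a triangle-avoidance constraint on passing numbers (no three coded vertices form a triangle). The argument then proceeds in four steps: (i) enumerate the admissible strong similarity types of trees coding a single edge in $\bT$, in the sense of Definition~\ref{def.Sauer}; (ii) observe that the triangle-free constraint eliminates many of the similarity types that would appear for the Rado graph in Theorem~\ref{thm.Sauer}, collapsing them to at most two equivalence classes in any sufficiently pruned subtree; (iii) apply a Ramsey theorem on strong subtrees, in the spirit of Milliken's Theorem~\ref{thm.M} but restricted to subtrees that still code a triangle-free graph, to obtain a subtree on which each admissible similarity type of an edge takes a single color; (iv) extract inside the resulting subtree, via a back-and-forth construction, a subtree coding a copy of $\mathcal{H}_3$, thereby producing $\mathcal{H}\in\binom{\mathcal{H}_3}{\mathcal{H}_3}$ with at most two colors on its edges.

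The main obstacle is step (iii): Milliken's Theorem~\ref{thm.M} applies to arbitrary strong subtrees of $2^{<\om}$, but a generic strong subtree of $\bT$ will typically contain triangles and thus fail to code a triangle-free graph at all, let alone a universal one. Sauer's original proof in \cite{Sauer98} overcomes this with a bespoke combinatorial argument that interleaves Ramsey steps with triangle-avoidance pruning; a systematic resolution (yielding finite big Ramsey degrees for every finite triangle-free graph, not merely for the edge) was only achieved later by Dobrinen via the strong coding trees of \cite{DobrinenH_317}, which are designed from the outset to respect triangle-freeness and admit their own Halpern--\Lauchli-type pigeonhole. The collapse in step (ii) to exactly two equivalence classes reflects the asymmetry, in the triangle-free setting, between the two ways an edge can be encoded by passing numbers, and it is this asymmetry that forces the bound to be $2$ rather than $1$.
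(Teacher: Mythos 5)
The paper does not prove Theorem \ref{thm.SauerH_3}; it is quoted from \cite{Sauer98} as background for Section \ref{sec.H_3}, so there is no internal proof to compare yours against, and I am assessing the proposal on its own terms. Both halves have genuine gaps. For the lower bound, the problem is that your color $c(\{v_i,v_j\})$ is determined by whether $v_j$ has a neighbor among $\{v_k:k<i\}$ \emph{in the ambient graph} $\mathcal{H}_3$, whereas the extension property you invoke only realizes finite configurations \emph{inside} a given copy $\mathcal{H}$. Realizing a ``pendant'' edge inside $\mathcal{H}$ gives no control over whether its later endpoint has an early neighbor elsewhere in $\mathcal{H}_3$, so the persistence of color $1$ is not established. (Color $0$ is genuinely unavoidable by an internal argument: pick $p\in\mathcal{H}$, then $u\in\mathcal{H}$ nonadjacent to $p$ of larger index, then a common neighbor $w\in\mathcal{H}$ of $p$ and $u$ of still larger index; the edge $\{u,w\}$ has color $0$. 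For color $1$ you would need at least the observation that no copy of $\mathcal{H}_3$ is contained in a finite union of neighborhoods of vertices --- each such neighborhood is independent by triangle-freeness, while $\mathcal{H}_3$ has unbounded chromatic number --- and even granting that, locating an edge of $\mathcal{H}$ whose later endpoint has \emph{no} ambient neighbor below the earlier endpoint's index requires a further argument you have not supplied.)

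For the upper bound, steps (ii) and (iii) carry the entire weight of the theorem and neither is proved. Step (ii) --- that the strong similarity types of pairs of coding nodes representing an edge collapse to exactly two persistent classes --- is asserted rather than argued, and it is precisely where the bound $2$ comes from; without it you get neither the upper bound of $2$ nor the matching lower bound. Step (iii) asks for a Milliken-type theorem restricted to subtrees that still code a universal triangle-free graph; as you yourself observe, arbitrary strong subtrees of $2^{<\om}$ destroy triangle-freeness, and you resolve this only by deferring either to ``Sauer's bespoke combinatorial argument'' in \cite{Sauer98} --- which is circular, since that is the proof you are meant to be reconstructing --- or to the strong coding trees of \cite{DobrinenH_317}, a separate and later development whose pigeonhole principle (Theorem \ref{thm.A.4}) is itself only sketched in this survey. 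What you have is a correct high-level map of where a proof must go, with the two decisive steps left open.
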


This is in contrast to a theorem of \Nesetril\ and \Rodl\ in \cite{Nesetril/Rodl77} and \cite{Nesetril/Rodl83} proving that
 the \Fraisse\ class of finite ordered triangle-free graphs has the Ramsey property.
Sauer's result begged  the question of whether
his result would extend to all finite triangle-free graphs.
In other words, does
 $\mathcal{H}_3$ have finite big Ramsey degrees?
This was recently solved by the author in \cite{DobrinenH_317}.

\begin{thm}[Dobrinen, \cite{DobrinenH_317}]\label{thm.DG}
The universal triangle-free graph has finite big Ramsey degrees.
\end{thm}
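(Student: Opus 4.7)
The plan is to mimic the broad structure of Sauer's argument for $\mathcal{R}$: code the graph by trees, isolate the relevant strong similarity types, envelope the coding trees inside more uniform structures, and apply a Ramsey theorem for those uniform structures. The essential difficulty is that Milliken's theorem on strong subtrees of $2^{<\om}$ cannot be used directly, because arbitrary level sets in a strong subtree of $2^{<\om}$ will create triangles via the passing number convention. So the first step is to replace $2^{<\om}$ by a carefully designed class of \emph{strong coding trees} in which the splitting structure is thinned out, coding nodes (those representing vertices of $\mathcal{H}_3$) and splitting nodes are separated into distinct levels, and the recursive extension procedure guarantees triangle-freeness of the coded graph while still producing a copy of $\mathcal{H}_3$ on the set of coding nodes.

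Next, I would prove an analogue of Milliken's Theorem \ref{thm.M} for these strong coding trees. Following the template of Section \ref{sec.HL}, this reduces to a pigeonhole principle for level sets, which should be established by a Harrington-style forcing argument. I would use a forcing that adds many generic branches through a fixed strong coding tree $T$, together with names for a non-principal ultrafilter on $\om$, then apply the \Erdos-Rado theorem to reflect the types of conditions $p_{\vec\al}$ down to a pairwise compatible family indexed by $\prod_{i<d}K_i'$, exactly as in the proof of Theorem \ref{thm.matrixHL}. The new wrinkle is that when one builds a stronger common extension $q$ and then searches for the next level witnessing homogeneity, one must restrict the search to \emph{valid} extensions, i.e.\ extensions compatible with triangle-freeness relative to the coding nodes already fixed in $T$. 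Producing such extensions and showing that the relevant sets of conditions are dense in the forcing is the main obstacle; this requires exploiting the combinatorial structure of the strong coding tree (in particular the separation of splitting and coding levels) together with Henson's extension property to guarantee that at each step one can still find passing number patterns not creating triangles.

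With a Milliken-style theorem in hand for strong coding trees, the remaining steps parallel Sauer's program for the Rado graph. I would define strong similarity types for finite strongly diagonal subtrees of a strong coding tree coding a finite triangle-free graph $\G$, observe that there are only finitely many such types for each $\G$, and construct strong tree envelopes that place any copy of a given similarity type into a canonically larger finite strong coding tree with a bounded number of levels. A coloring of copies of $\G$ then transfers to a coloring of finite strong coding subtrees of the appropriate level-count, to which the new Ramsey theorem applies. Iterating over the finitely many similarity types, one obtains an infinite strong coding subtree whose coding nodes form a copy $\mathcal{H}'$ of $\mathcal{H}_3$ on which each similarity type receives a single color, yielding an upper bound for the big Ramsey degree $n(\G)$ equal to the number of similarity types. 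The matching lower bound follows, as in Sauer's argument, by noting that each similarity type persists in any subtree of the strong coding tree coding a copy of $\mathcal{H}_3$.
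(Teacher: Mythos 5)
Your proposal follows essentially the same route as the paper's outline of \cite{DobrinenH_317}: strong coding trees with separated coding and splitting levels, Harrington-style forcing proofs of the Halpern-\Lauchli\ and Milliken analogues in which extensions must be restricted to ``valid'' ones respecting the parallel-1's/triangle-free criterion, finitely many similarity types of diagonal trees coding $\G$ with envelopes, and a transfer of the coloring followed by extraction of a diagonal subtree coding $\mathcal{H}_3$. The one caveat is your closing claim that the lower bound ``follows as in Sauer's argument'': persistence of strict similarity types is genuinely more delicate in this setting and, in any case, is not needed for the stated theorem, which asserts only finiteness and hence only requires the upper bound.
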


The proof of Theorem \ref{thm.DG}
proceeds via the following steps.
\begin{enumerate}
\item
Build a space of new kinds of trees, each of which codes $\mathcal{H}_3$.  We call these {\em strong coding trees}.
Develop a new notion of strict similarity type, which  augments the notion of strong similarity type in $2^{<\om}$.
\item
Prove analogues of Halpern-\Lauchli\ and Milliken for
the collection of strong coding trees, obtaining one color per strong similarity type.
These use the technique of forcing. The set-up and arguments are similar to those  in Theorem \ref{thm.matrixHL}.
The coding nodes present an obstacle which must be overcome in several separate forcings; furthermore,
 the partial orderings are stricter  than simply extension.
\item
Develop new notion of envelope.
\item
Apply theorems and notions from (3) and (4) to obtain a strong coding tree $S$ with one color per strict similarity type.
\item
Construct a diagonal subtree  $D\sse S$ which codes $\mathcal{H}_3$, and has room for the envelopes to fit in an intermediary subtree $S'$, where $D\sse S'\sse S$.
\item
Conclude that $\mathcal{H}_3$ has finite big Ramsey degrees.
\end{enumerate}

In this article, we  present the space of strong coding trees and the Halpern-\Lauchli\ analogue, leaving the reader interested in the further steps to read \cite{DobrinenH_317}.

One constraint for finding the finite big Ramsey degrees for $\mathcal{H}_3$ was  that, unlike the bi-embeddability between the Rado graph and  the graph coded by all the nodes in $2^{<\om}$, an interplay which was of fundamental  importance  to Sauer's proof in the previous section,
there is no graph induced by a homogeneous tree of some simple form which is bi-embeddable with the universal triangle-free graph.
Thus, our approach was to consider certain nodes in our trees as distinguished to code vertices, calling them
{\em coding nodes}.
We now present the new space of strong triangle-free trees coding $\mathcal{H}_3$.

For $i<j<k$,  suppose the vertices
 $\{v_i,v_j,v_k\}$  are coded by
the distinguished nodes $t_i,t_j,t_k$ in $2^{<\om}$,
where $|t_i|<|t_j|<|t_k|$.
 The vertices
 $\{v_i,v_j,v_k\}$  form a triangle
if and only if
there are edges between each pair of vertices
if and only if
the distinguished coding nodes $t_i,t_j,t_k$ satisfy
\begin{equation}\label{eq.triangle}
t_k(|t_j|)=t_k(|t_i|)=t_j(|t_i|)=1.
\end{equation}
Whenever
$t_k(|t_i|)=t_j(|t_i|)=1$, we say that $t_k$ and $t_j$ have {\em parallel 1's}.
The following criterion guarantees that as we construct a tree with distinguished nodes coding vertices, we can construct one in which the coding nodes  code no triangles.
\vskip.1in

\noindent \bf Triangle-Free Extension Criterion: \rm  A node $t$ at the level of the $n$-th coding  node $t_n$ extends right if and only if $t$ and $t_n$ have no parallel $1$'s.
\vskip.1in

The following  slight modification  of a property which Henson proved guarantees a copy of $\mathcal{H}_3$ is used in our construction of strong coding trees.
\vskip.1in

\begin{enumerate}
\item[]
\begin{enumerate}
\item[$(A_3)^{\tt{tree}}$]
Let  $\lgl F_i:i<\om\rgl$   be any listing of finite subsets of $\om$ such that $F_i\sse i$ and each finite set  appears as $F_i$ for infinitely many indices $i$.
For each $i<\om$,
if
 $t_k(l_j)=0$ for all pairs $j<k$ in $F_i$,
then
there is some $n\ge i$ such that
for all $k<i$,
$t_n(l_k)=1$ if and only if $ k\in F_i$.
\end{enumerate}
\end{enumerate}
\vskip.1in

We now show how to build a tree $\bS$ which has distinguished nodes coding the vertices in $\mathcal{H}_3$ and which is maximally branching subject to not coding any triangles.
Moreover, the coding nodes will be dense in the tree $\bS$.
Let $\lgl F_i:i<\om\rgl$ list $[\om]^{<\om}$ in such a way that $F_i\sse i$ and each finite set appears infinitely many times.
Enumerate the nodes in $2^{<\om}$ as $\lgl u_i:i<\om\rgl$,
where all nodes in $2^k$ appear before any node in $2^{k+1}$.
Let the first two levels be $2^{\le 1}$ and let the least coding node $t_0$ be $\lgl 1\rgl$.
Extend $\lgl 0\rgl$ both right and left and extend $t_0$ only left.
$t_0$ codes the vertex $v_0$.
From here,
on odd steps $2n+1$,
if the node $u_n$ is in the part of the tree constructed so far,
extend $u_n$ to a coding node $t_{2n+1}$ in $2^{2n+2}$ such that the only edge  it codes is an edge with vertex $v_{2n}$;
that is, $t_{2n+1}(|t_i|)=1$ if and only if $i=2n$.
On even steps $2n$, if $\{t_i:i\in F_n\}$ does not code any edges between the vertices $\{v_i:i\in F_n\}$,
then take some node $s$ in the tree constructed so far such that $s(|t_i|)=1$ for all $i\in F_n$,
$s(|t_i|)=0$ for all $i\in 2n-1$,
and $s(2n)=1$.
That such a node $s$ is in the tree constructed so far is guaranteed by our maximal branching subject to the Triangle-Free Extension Criterion.
If on either step the condition is not met,  then let $t_{2n+1}={0^{2n+1}}^{\frown}1$.

The trees constructed in \cite{DobrinenH_317} have an additional requirement, but these are the main ideas.
Since the condition $(A_3)^{\tt tree}$ is met, the coding nodes in $\bS$ code $\mathcal{H}_3$.
Notice that any subtree of $\bS$ which is isomorphic to $\bS$,  coding nodes being taken into account in the isomorphism, also codes $\mathcal{H}_3$.

\begin{center}
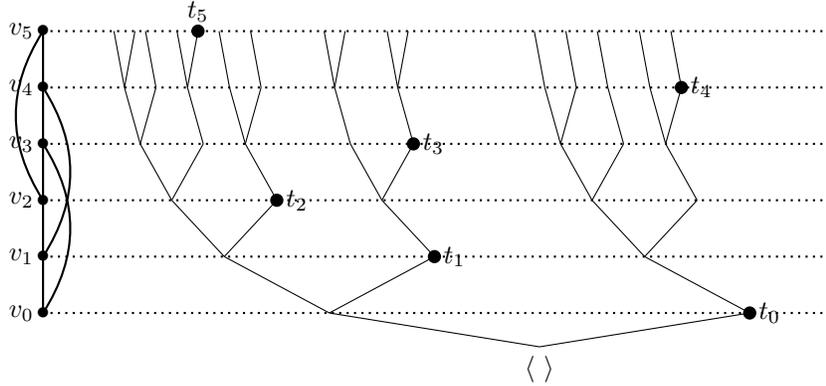
\begin{figure}
\begin{tikzpicture}[grow'=up,xscale=.55,yscale=.5]
\tikzstyle{level 1}=[sibling distance=4in]
\tikzstyle{level 2}=[sibling distance=2in]
\tikzstyle{level 3}=[sibling distance=1in]
\tikzstyle{level 4}=[sibling distance=0.6in]
\tikzstyle{level 5}=[sibling distance=0.3in]
\tikzstyle{level 6}=[sibling distance=0.2in]
\node {$\left< \ \right >$}
 child{
	child{
		child{
			child{
				child{child child}
				child{child child{edge from parent[draw=none]}}}
			child{
				child{child child{coordinate (t5)}}
				child {edge from parent[draw=none]}
			}
			}
		child{ coordinate (t2)
			child{
				child{ child child{edge from parent[draw=none]} }
				child{ child child{edge from parent[draw=none]} }
				}
			child {edge from parent[draw=none]}}
		}
	child{ coordinate (t1)
		child{
			child{
				child{
					child
					child
					}
				child {edge from parent[draw=none]}}
			child{coordinate (t3)
				child{
					child					
					child}
				child {edge from parent[draw=none]}}}
		child {edge from parent[draw=none]} }}
 child{coordinate (t0)
	child{
		child{
			child{
				child{
					child
					child {edge from parent[draw=none]}}
				child{
					child
					child {edge from parent[draw=none]}}}
			child{child{
					child
					child {edge from parent[draw=none]}} child {edge from parent[draw=none]}}
		}
		child{
			child{child{
					child
					child {edge from parent[draw=none]}} child{ coordinate (t4)
					child
					child {edge from parent[draw=none]}}}
			child {edge from parent[draw=none]}}
	}
	child {edge from parent[draw=none]}};
		
\node[right] at (t0) {$t_{0}$};
\node[right] at (t1) {$t_{1}$};
\node[right] at (t2) {$t_{2}$};
\node[right] at (t3) {$t_{3}$};
\node[right] at (t4) {$t_{4}$};
\node[above] at (t5) {$t_{5}$};

\node[circle, fill=black,inner sep=0pt, minimum size=5pt] at (t0) {};
\node[circle, fill=black,inner sep=0pt, minimum size=5pt] at (t1) {};
\node[circle, fill=black,inner sep=0pt, minimum size=5pt] at (t2) {};
\node[circle, fill=black,inner sep=0pt, minimum size=5pt] at (t3) {};
\node[circle, fill=black,inner sep=0pt, minimum size=5pt] at (t4) {};
\node[circle, fill=black,inner sep=0pt, minimum size=5pt] at (t5) {};

\draw[thick, dotted] let \p1=(t1) in (-12,\y1) node (v1) {$\bullet$} -- (7,\y1);
\draw[thick, dotted] let \p1=(t2) in (-12,\y1) node (v2) {$\bullet$} -- (7,\y1);
\draw[thick, dotted] let \p1=(t3) in (-12,\y1) node (v3) {$\bullet$} -- (7,\y1);
\draw[thick, dotted] let \p1=(t0) in (-12,\y1) node (v0) {$\bullet$} -- (7,\y1);
\draw[thick, dotted] let \p1=(t4) in (-12,\y1) node (v4) {$\bullet$} -- (7,\y1);
\draw[thick, dotted] let \p1=(t5) in (-12,\y1) node (v5) {$\bullet$} -- (7,\y1);

\node[left] at (v1) {$v_1$};
\node[left] at (v2) {$v_2$};
\node[left] at (v3) {$v_3$};
\node[left] at (v0) {$v_0$};
\node[left] at (v4) {$v_4$};
\node[left] at (v5) {$v_5$};

\draw[thick] (v0.center) to (v1.center) to (v2.center) to (v3.center) to [bend left] (v0.center);
\draw[thick] (v3.center) to (v4.center) to (v5.center) to [bend right] (v2.center);

\draw[thick] (v4.center) to[bend left] (v1.center);

\end{tikzpicture}
\caption{Initial part of  a tree $\bS$ coding $\mathcal{H}_3$ maximally branching subject to TFC}
\end{figure}
\end{center}

The trees of the form $\bS$ are  almost what is needed.
However, in order to procure the full extent of the Ramsey theorems needed,
it is necessary to work with stretched versions of $\bS$ which are skew.
This means that each level has at most one of either a node which splits or a coding node.
Call the set of nodes which are either coding or splitting the set of {\em critical nodes}.
Let $\bT$ denote a skewed version of $\bS$, so that its coding nodes are dense in $\bT$ and code $\mathcal{H}_3$.
Let $\mathcal{T}(\bT)$ denote the collection of all subtrees of $\bT$ which are isomorphic to $\bT$.
Thus, every tree in $\mathcal{T}(\bT)$  codes $\mathcal{H}_3$.

Similarly to the notation for the Milliken space in Definition \ref{defn.milliken} and following,
let $r_k(T)$ denote the first $k$ levels of $T$;
thus $r_k(T)$ contains a total of $k$ critical  nodes.
$r_{k+1}[r_k(T),T]$ denotes the set of all $r_{k+1}(S)$, where $r_k(S)=r_k(T)$ and $S$ is a subtree of $T$ in $\mathcal{T}(\bT)$.
The following is the analogue of the Halpern-\Lauchli\ Theorem for strong coding trees.

\begin{thm}[Dobrinen, \cite{DobrinenH_317}]\label{thm.A.4}
Let $\bT$ be a strong coding tree, $T\in\mathcal{T}(\bT)$,  $k<\om$, and $c$ be a coloring of $r_{k+1}[r_k(T),T]$ into two colors.
Then there is an $S\in [r_k(T),T]$ such that all members of $r_{k+1}[r_k(T),S]$ have the same $c$-color.
\end{thm}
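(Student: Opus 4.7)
The plan is to mirror Harrington's forcing argument used above for Theorem~\ref{thm.matrixHL} and its pigeonhole consequence Lemma~\ref{lem.A4Milliken}, but carried out inside the ambient strong coding tree $\bT$, so that every witness respects both the skew structure and the Triangle-Free Extension Criterion. Let $d$ be the number of maximal nodes of $r_k(T)$, and let $t^*$ denote the $(k+1)$-st critical node of $T$. Then $r_{k+1}[r_k(T),T]$ is parametrized by $d$-tuples of nodes in $T$ extending the maximal nodes of $r_k(T)$ up through the level of $t^*$, where exactly one tuple entry equals $t^*$ itself and the remaining entries are passing nodes constrained (when $t^*$ is a coding node) so as not to create parallel $1$'s with $t^*$ above any coding node that would complete a triangle.

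First I would define a forcing $\bP$ that adds $\kappa = \beth_{2d}$ many generic branches through $T$, each extending one of the $d$ maximal nodes of $r_k(T)$. A condition is a finite partial function $p: d \times \vec\delta_p \to T$ with $\vec\delta_p \in [\kappa]^{<\om}$, assigning to each $(i,\alpha)$ a node of $T$ extending the $i$-th maximal node of $r_k(T)$ at a common level $l_p$, in such a way that the image configuration embeds into some $S \in \mathcal{T}(\bT)$ extending $r_k(T)$. The order is inclusion. Let $\dot{\mathcal{U}}$ be a $\bP$-name for a nonprincipal ultrafilter on $\om$, and for each $\vec\alpha \in [\kappa]^d$ choose $p_{\vec\alpha} \in \bP$ with $\vec\alpha \subseteq \vec\delta_{p_{\vec\alpha}}$ deciding a color $\varepsilon_{\vec\alpha} \in 2$ such that $p_{\vec\alpha}$ forces $c(\dot b_{\vec\alpha}\re l) = \varepsilon_{\vec\alpha}$ on $\dot{\mathcal{U}}$-many $l$; further truncate so that the ground-model images of $p_{\vec\alpha}$ already realize $\varepsilon_{\vec\alpha}$ on the canonically associated next critical level.

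Next I would apply the Erd\H{o}s-Rado Theorem~\ref{thm.ER} to a countable coloring $f: [\kappa]^{2d} \to \om$ that records, for every $\iota \in \mathcal{I}$, the data of equation~(\ref{eq.fiotatheta}): the color $\varepsilon_{\iota_e(\vec\theta)}$, the labeled isomorphism type of $p_{\iota_e(\vec\theta)}$ (now including its position inside the strong coding tree, so that compatibility of images is remembered), and the coincidence pattern between $\vec\delta_{\iota_e(\vec\theta)}$ and $\vec\delta_{\iota_o(\vec\theta)}$. Passing to a homogeneous $K \subseteq \kappa$ of size $\aleph_1$ and disjoint infinite $K'_0 < \cdots < K'_{d-1}$ inside $K$, the analogues of Claims~\ref{claim.onetypes} and~\ref{claim.j=j'} produce a fixed color $\varepsilon^*$ and fixed nodes $t^*_i$ with $p_{\vec\alpha}(i,\alpha_i) = t^*_i$ for all $\vec\alpha \in \prod_{i<d} K'_i$; the analogue of Lemma~\ref{lem.compat} then shows $\{p_{\vec\alpha} : \vec\alpha \in \prod_{i<d} K'_i\}$ is pairwise compatible. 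Amalgamate this family into a single condition $q$, extend $q$ in the ground model to some $r \le q$ whose image is a legitimate next critical level in $\mathcal{T}(\bT)$, and declare $r_{k+1}(S)$ to be that level. Iterating the argument inside $[r_{k+1}(S), T]$ through all later critical levels produces the desired $S \in [r_k(T), T]$.

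The main obstacle is that, unlike in Milliken's space $\mathcal{M}$, a condition in $\bP$ cannot amalgamate freely: the union of two compatible conditions must still be realizable inside some $S \in \mathcal{T}(\bT)$, which imposes Triangle-Free Extension constraints at every coding node below. This matters especially when $t^*$ is itself a coding node, since the passing nodes above an earlier coding node $t_j$ on which $t^*$ has a $1$ are then forbidden from extending with a $1$; the forcing, the choice of the $p_{\vec\alpha}$, and the amalgamation into $q$ must all be tuned so that no such forbidden parallel-$1$ configuration is ever forced into existence. This is exactly the content of the strong-coding-tree forcings of~\cite{DobrinenH_317}, where the coding-node case is handled by a separate, more restrictive partial order than the splitting-node case. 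Once these triangle-free bookkeeping constraints are built into the definition of $\bP$ and into the records kept by the coloring $f$, the Erd\H{o}s-Rado plus compatibility skeleton of Theorem~\ref{thm.matrixHL} produces the monochromatic next critical level, and iteration delivers the theorem.
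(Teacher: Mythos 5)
Your skeleton is the paper's: a forcing that adds $\kappa=\beth_{2d}$ generic branches over the level sets extending $r_k(T)$, an ultrafilter name, choice of conditions $p_{\vec\al}$ deciding a color on ultrafilter-many levels, an \Erdos--Rado homogenization over $[\kappa]^{2d}$ with the coloring recording types and coincidence patterns, compatibility and amalgamation into a single $q$, and iteration through the critical levels. You also correctly identify the central obstacle (the parallel-$1$'s/triangle-free bookkeeping, with the coding-node case requiring a separate and harder forcing), which is all the paper itself does before deferring to \cite{DobrinenH_317}.

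There is, however, one concrete divergence that would break the argument as you state it: you order $\bP$ by inclusion, putting the $\mathcal{T}(\bT)$-realizability constraint only on the conditions. The paper's order, already in the \emph{simpler} splitting-node case, carries the additional clause that whenever an increasing tuple $(\al_0,\dots,\al_{d-1})$ from $\vec\delta_p$ has $r_k(T)\cup\{p(i,\al_i):i<d\}\cup\{p(d)\}\in r_{k+1}[r_k(T),T]$, the corresponding configuration for $q$ must still lie in $r_{k+1}[r_k(T),T]$; the paper explicitly warns that ``the partial orderings are stricter than simply extension.'' This clause is not cosmetic: the coloring $c$ is defined only on $r_{k+1}[r_k(T),T]$, so for ``$c(\dot{b}_{\vec\al}\re l)=\varepsilon$ for $\dot{\mathcal{U}}$-many $l$'' to make sense one must guarantee that the generic restrictions remain legitimate one-step extensions at all higher levels. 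Under mere inclusion, a realizable condition can be extended to another realizable condition in which a particular tuple acquires a forbidden passing number against the new critical node and so drops out of $r_{k+1}[r_k(T),T]$; then the set of levels at which $c(\dot{b}_{\vec\al}\re l)$ is even defined need not be in the ultrafilter, and the whole decision step collapses. Two smaller points in the same direction: the critical node should be a single designated coordinate of the condition (the paper's $p(d)$ is \emph{the} splitting node of $T_d$ at level $l_p$, with the leftmost tree pinned to $0^{<\om}$), not one of the coordinates through which $\kappa$ many branches are added, and the ultrafilter should live on the set $L$ of critical levels rather than on all of $\om$. Your closing paragraph gestures at the needed tuning, but as written the forcing you define does not support the ultrafilter step.
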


The case when the maximal critical node in $r_{k+1}(T)$ is a splitting node is significantly simpler to handle than when it is a coding node, so we present that forcing here.
Enumerate the maximal nodes of $r_{k+1}(T)$ as $\lgl s_i:i\le d\rgl$, and let $s_d$ denote the splitting node.
Let $i_0$ denote the index such that $s_{i_0}\in \{0\}^{<\om}$.
For each $i\ne i_0$, let $T_i=\{t\in T:t\contains s_i\}$;
let $T_{i_0}={0}^{<\om}$.
Let $L$ denote the set of levels $l$ such that there is a splitting node at level $l$ in $T_d$.

Let $\bP$ be the set of conditions $p$ such that
$p$ is a  function
of the form
$$
p:\{d\}\cup(d\times\vec{\delta}_p)\ra T\re l_p,
$$
where $\vec{\delta}_p\in[\kappa]^{<\om}$ and $l_p\in L$,
such that
\begin{enumerate}
\item[(i)]
$p(d)$ is {\em the} splitting  node extending $s_d$ at level $l_p$;
\item [(ii)]
For each $i<d$,
 $\{p(i,\delta) : \delta\in  \vec{\delta}_p\}\sse  T_i\re l_p$.
\end{enumerate}
\vskip.1in

The partial
 ordering on $\bP$  is defined as follows:
$q\le p$ if and only if either
\begin{enumerate}
\item
$l_q=l_p$ and $q\contains p$ (so also $\vec{\delta}_q\contains\vec{\delta}_p$);
or else
\item
$l_q>l_p$, $\vec{\delta}_q\contains \vec{\delta}_p$, and
\begin{enumerate}
\item[(i)]
$q(d)\supset p(d)$,
and for each $\delta\in\vec{\delta}_p$ and $i<d$,
$q(i,\delta)\supset p(i,\delta)$;
\item[(ii)]
Whenever  $(\al_0,\dots,\al_{d-1})$ is a strictly increasing sequence in $(\vec{\delta}_p)^d$ and
\begin{equation}
 r_k(T)\cup\{p(i,\al_i):i<d\}\cup \{p(d)\}\in
r_{k+1}[r_k(T),T],
\end{equation}
then also
 \begin{equation}
r_k(T)\cup \{q(i,\al_i):i<d\}\cup \{q(d)\}\in
r_{k+1}[r_k(T),T].
\end{equation}
\end{enumerate}
\end{enumerate}

The proof proceeds in a similar manner to that of Theorem \ref{thm.matrixHL}, except that the name for an ultrafilter $\dot{\mathcal{U}}$ is now on $L$ and
there is much checking that certain criteria are met so that the tree being chosen by the forcing will actually again be a member of $\mathcal{T}(\bT)$.
The actual theorem needed to obtain the finite big Ramsey degrees for $\mathcal{H}_3$  is more involved; the statement above appears as a remark after Theorem 22 in \cite{DobrinenH_317}.
It is presented here in the hope that the reader will see the similarity with Harrington's forcing proof.

%%%%%%%%%%%%%%%%%%%%
%%%%%%%%%%%%%%%%%%%%
%%%%%%%%%%%%%%%%%%%%
%%%%%%%%%%%%%%%%%%%%
%%%%%%%%%%%%%%%%%%%%
%%%%%%%%%%%%%%%%%%%%

\section{Halpern-\Lauchli\ Theorem  on trees of uncountable height and applications}\label{sec.measurable}

The Halpern-\Lauchli\ Theorem can be extended to trees of certain uncountable heights.
Here, we present some of the known results and their applications to uncountable homogeneous relational structures, as well as some open problems in this area.

The first extension of the Halpern-\Lauchli\ Theorem to uncountable height trees is due to Shelah.
In fact,
he proved  a
strengthening of Milliken's Theorem, where  level sets of size $m$, for a fixed positive integer $m$, are the objects being colored;  this includes strong trees with $k$ levels when $m=2^k$, as strong trees can be recovered from the meet closures of their maximal nodes.
Shelah's theorem was strengthened to all $m$-sized antichains in \cite{Dzamonja/Larson/MitchellQ09}.
This is the version  presented here.
The definition of {\em strong embedding} in the setting of uncountable height trees is an augmented analogue of Definition \ref{def.Sauer}
 for countable height trees,
with the additional requirement that
a fixed well-ordering $\prec$ on the nodes in $2^{<\kappa}$
is also preserved.
A {\em strong subtree} of $2^{<\kappa}$ is a subtree $S\sse 2^{<\kappa}$ such that there is
a set of levels $L\sse\kappa$ of cardinality $\kappa$
and  the splitting nodes in $S$ are exactly those with length in $L$.
In particular, if $e:2^{<\kappa}\ra 2^{<\kappa}$ is a strong embedding, then
the image $e[2^{<\kappa}]$ is a strong subtree of $2^{<\kappa}$.
For a node $w$ in a strong tree, Cone$(w)$ denotes the set of all nodes in the tree extending $w$.
The  proof of the following theorem
is an elaborate forcing proof,
having as its base
  ideas from  Harrington's forcing  proof of Theorem \ref{thm.matrixHL}.

\begin{thm}[Shelah, \cite{Shelah91}; \Dzamonja, Larson, and Mitchell, \cite{Dzamonja/Larson/MitchellQ09}]\label{thm.Shelah}
Suppose that $m<\om$ and $\kappa$ is a cardinal which is measurable in the generic extension obtained by adding $\lambda$ many Cohen subsets of $\kappa$,
where $\lambda\ra(\kappa)^{2m}_{2^{\kappa}}$.
Then for any coloring $d$ of the $m$-element
antichains of $2^{<\kappa}$ into $\sigma<\kappa$ colors, and for any well-ordering $\prec$ of the levels of $2^{<\kappa}$,
there is a strong embedding $e:2^{<\kappa}\ra 2^{<\kappa}$ and a dense set of elements $w$ such that
\begin{enumerate}
\item
$e(s)\prec e(t)$ for all $s\prec t$ from Cone$(w)$, and
\item
$d(e[A])=d(e[B])$ for all $\prec$-similar $m$-element antichains $A$ and $B$ of Cone$(w)$.
\end{enumerate}
\end{thm}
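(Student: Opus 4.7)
The plan is to adapt Harrington's forcing argument from Theorem \ref{thm.matrixHL} to the measurable-cardinal setting. First I would set up the forcing $\bP$ that adds $\lambda$ many Cohen subsets of $\kappa$, viewed as adding $\lambda$ many generic branches $\{\dot{b}_\al:\al<\lambda\}$ through $2^{<\kappa}$. A condition is a partial function $p:\vec{\delta}_p\ra 2^{l_p}$ with $\vec{\delta}_p\in[\lambda]^{<\kappa}$ and $l_p<\kappa$, ordered by the natural extension. By hypothesis, in $V^{\bP}$ the cardinal $\kappa$ remains measurable, so fix a $\bP$-name $\dot{\mathcal{U}}$ for a normal ultrafilter on $\kappa$. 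As in the countable case, all the real work takes place in the ground model $V$: the forcing is used only as a device for conducting a coherent search through $2^{<\kappa}$, and the strong embedding itself will be constructed in $V$.

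Next, for each $\vec{\al}\in[\lambda]^{2m}$, I would choose a condition $p_{\vec{\al}}\in\bP$ with $\vec{\al}\sse\vec{\delta}_{p_{\vec{\al}}}$ such that $p_{\vec{\al}}$ decides a $d$-color $\varepsilon_{\vec{\al}}$ for the $m$-antichain extracted from the generic branches indexed by (say) the even coordinates of $\vec{\al}$, for $\dot{\mathcal{U}}$-many levels, and such that at the concrete level $l_{p_{\vec{\al}}}$ the antichain $\{p_{\vec{\al}}(\al_{2i}):i<m\}$ already realizes color $\varepsilon_{\vec{\al}}$. Exactly as in Claims \ref{claim.onetypes} and \ref{claim.j=j'}, define a coloring $f$ on $[\lambda]^{2m}$ whose value at $\vec{\al}$ records the full isomorphism type of $p_{\vec{\al}}$: the color $\varepsilon_{\vec{\al}}$, the level $l_{p_{\vec{\al}}}$, the image pattern $\lgl p_{\vec{\al}}(\delta):\delta\in\vec{\delta}_{p_{\vec{\al}}}\rgl$, the coincidence pattern between supports of $p_{\vec{\al}}$ and $p_{\vec{\beta}}$ when $\vec{\al},\vec{\beta}$ overlap, and -- new compared to Harrington -- the relative $\prec$-order of the image nodes. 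The range of $f$ has size at most $2^\kappa$, so the hypothesis $\lambda\ra(\kappa)^{2m}_{2^\kappa}$ supplies a homogeneous $K\sse\lambda$ of cardinality $\kappa$.

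From $K$, thin to $K'\sse K$ with the usual separation property and iterate the amalgamation of Lemma \ref{lem.compat} transfinitely up $\kappa$. At successor stages, use homogeneity of $f$ to amalgamate the previously chosen pairwise compatible conditions into a single common extension, and use a $\dot{\mathcal{U}}$-measure-one set of levels to choose a coherent next level at which the fixed color $\varepsilon^*$ is actually realized on every relevant $m$-antichain. At limit stages of cofinality less than $\kappa$, take a diagonal intersection of the previously recorded measure-one sets -- this is where normality of $\mathcal{U}$ is essential, and it is the substantive use of measurability in $V^{\bP}$. The resulting sequence of conditions defines the strong embedding $e:2^{<\kappa}\ra 2^{<\kappa}$ on a cofinal set of levels forming the splitting levels of $e[2^{<\kappa}]$; the uniform color $\varepsilon^*$ and the encoded $\prec$-type together yield conclusions (1) and (2) on the cone below every $w$ whose level lies in a prescribed club, which is dense.

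The main obstacle will be the transfinite amalgamation at limit stages together with the correct encoding of $\prec$. The countable case required a single compatibility lemma; here one must cycle between amalgamating many conditions, intersecting $\dot{\mathcal{U}}$-measure-one level sets across stages, and propagating the $\prec$-data up the construction, all simultaneously. A secondary complication is that an $m$-antichain in $2^{<\kappa}$ spans several levels, so "the generic $m$-antichain at level $l$" must be replaced by an antichain whose component levels are determined by the divergence pattern of the chosen generic branches below $l$; building this asymmetric information into the coloring $f$ is what makes homogeneity only achievable on cones below a dense set of $w$'s, rather than on a single globally monochromatic strong subtree as in Milliken's theorem.
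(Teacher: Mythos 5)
A preliminary caveat: the paper does not actually prove Theorem \ref{thm.Shelah}. It states the result with citations to \cite{Shelah91} and \cite{Dzamonja/Larson/MitchellQ09} and says only that the proof ``is an elaborate forcing proof, having as its base ideas from Harrington's forcing proof of Theorem \ref{thm.matrixHL}.'' So there is no in-paper argument to compare yours against line by line. Your outline is in the right spirit and matches what the paper says the proof is based on: Cohen forcing viewed as adding $\lambda$ many branches through $2^{<\kappa}$, a name $\dot{\mathcal{U}}$ for a $\kappa$-complete normal ultrafilter supplied by measurability in the extension, conditions $p_{\vec\al}$ deciding colors on $\dot{\mathcal{U}}$-many levels, a coloring $f$ with at most $2^\kappa$ values recording the isomorphism type of $p_{\vec\al}$, an application of $\lambda\ra(\kappa)^{2m}_{2^{\kappa}}$, and a $\kappa$-length fusion using diagonal intersections at limits. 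That is the correct skeleton.

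The genuine gap is precisely the step the paper flags when it warns that Shelah's proof ``is more complex than simply lifting Harrington's argument to a measurable cardinal.'' Harrington's machinery colors \emph{level products}: tuples of nodes all of the same length, one per tree, so that ``the generic tuple at level $l$'' is simply $\dot{b}_{\vec\al}\re l$. Theorem \ref{thm.Shelah} colors arbitrary $m$-element \emph{antichains} of a single tree, whose elements have different lengths and whose $\prec$-similarity type records the meet structure, the relative order of node- and meet-levels, and the $\prec$-order. There is no single ``generic antichain at level $l$'' attached to a $2m$-tuple of branch indices; for each similarity type $\tau$ one must specify how to read off a type-$\tau$ antichain from the generic branches (which initial segments to take, at which of the $\dot{\mathcal{U}}$-many levels, respecting the divergence pattern), and the conditions $p_{\vec\al}$ must decide a color for \emph{every} type simultaneously, since conclusion (2) demands one color per $\prec$-similarity type rather than one color overall. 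Your sketch fixes a single $\varepsilon^*$ coming from ``the antichain extracted from the even coordinates,'' which only handles antichains that happen to be level sets of one fixed shape; the bookkeeping needed to thread all the $\prec$-similarity types through the choice of $p_{\vec\al}$, through the coloring $f$, and through each stage of the fusion is the substantive content that is missing. Relatedly, your construction aims at a single globally monochromatic strong subtree, whereas the theorem's conclusion is local (cones below a dense set of elements $w$) precisely because global homogeneity across all types is not what the argument delivers; asserting that the uniform color and the encoded $\prec$-data ``together yield conclusions (1) and (2)'' defers the argument rather than supplying it. As a plan of attack the proposal is reasonable; as a proof it stops exactly where the known difficulty begins.
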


A model of ZFC in which the hypothesis of the theorem holds may be obtained by starting in a model of ZFC + GCH + $\exists$ a $(\kappa+2m+2)$-strong cardinal, as was shown by Woodin (unpublished).
A cardinal $\kappa$ is {\em $(\kappa +d)$-strong} if there is an elementary embedding $j:V\ra M$ with critical point $\kappa$ such that $V_{\kappa+d}=M_{\kappa+d}$.

Theorem \ref{thm.Shelah}  is applied  in two papers of D\v{z}amonja, Larson, and Mitchell to prove that
the $\kappa$-rationals and the $\kappa$-Rado graph have finite big Ramsey degrees.
The $\kappa$-rationals, $\mathbb{Q}_{\kappa}=(Q,\le_Q)$, is the strongly $\kappa$-dense linear order of size $\kappa$.
The  nodes in the tree $2^{<\kappa}$ ordered by $<_Q$ produces the $\kappa$-rationals,
where $<_Q$ is the same  ordering  as in Definition \ref{def.Devlin}  applied to the tree $2^{<\kappa}$.

\begin{thm}[\Dzamonja, Larson, and Mitchell,
 \cite{Dzamonja/Larson/MitchellQ09}]
In any model of ZFC in which $\kappa$ is measurable after adding $\beth_{\kappa+\om}$ many Cohen subsets to $\kappa$,
for any fixed  positive integer $m$, given any coloring of
of $[\mathbb{Q}_{\kappa}]^m$ into less than $\kappa$ colors, there is a subset $Q^*\sse Q$ such that
$\mathbb{Q}^*=(Q^*,\le_Q)$ is also a strongly $\kappa$ dense linear order, and
such that the members of $[\mathbb{Q}^*]^m$ take only finitely many colors.
Moreover, for each strong similarity type, all members in  $[\mathbb{Q}^*]^m$  with that similarity type have the same color.
\end{thm}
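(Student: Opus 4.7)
The plan is to adapt Devlin's argument (see Theorem \ref{thm.DevlinThesis}) to the $\kappa$-rationals, substituting Shelah's Theorem \ref{thm.Shelah} for Milliken's theorem. First, I identify $\mathbb{Q}_\kappa$ with $(2^{<\kappa},<_Q)$, where $<_Q$ is the ordering from Definition \ref{def.Devlin} extended to trees of height $\kappa$. Under this identification, an $m$-tuple of rationals becomes an $m$-element subset of $2^{<\kappa}$, and its meet-closure is a finite subtree whose \emph{strong similarity type} is determined by the underlying tree shape, passing numbers, and induced $<_Q$-order on the distinguished nodes. A counting argument shows that the number of such strong similarity types is finite, depending only on $m$; this finite count will be the upper bound for the big Ramsey degree.

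Given a coloring $c : [\mathbb{Q}_\kappa]^m \to \sigma$ with $\sigma < \kappa$, I partition the domain by strong similarity type. Each type $\tau$ has a representative tree with some number $m_\tau \le m$ of maximal nodes, and the $m$-subsets of type $\tau$ are in bijection with $m_\tau$-element antichains of $2^{<\kappa}$ augmented by the fixed meet pattern of $\tau$. The coloring $c$ thus induces, for each $\tau$, a coloring of $m_\tau$-element antichains into at most $\sigma$ colors. I then apply Theorem \ref{thm.Shelah} iteratively over the finitely many similarity types, using the well-ordering $\prec$ coming from $<_Q$. Each application produces a strong embedding of $2^{<\kappa}$ and a dense cone on which $\prec$-similar $m_\tau$-antichains of the relevant $\tau$-pattern receive the same color; because compositions of strong embeddings are strong embeddings and monochromaticity of a coloring is preserved by further restriction, these successive applications combine into a single strong embedding $e : 2^{<\kappa} \to 2^{<\kappa}$ and a dense cone $\mathrm{Cone}(w)$ on which $c$ is constant on every strong similarity class of $m$-subsets.

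Finally, I extract from $e[\mathrm{Cone}(w)]$ a subset $Q^* \sse Q$ whose $<_Q$-order is strongly $\kappa$-dense. Because $e[\mathrm{Cone}(w)]$ is a strong subtree of $2^{<\kappa}$ of full height $\kappa$, its $<_Q$-order still realizes the strongly $\kappa$-dense linear order, and a back-and-forth construction of length $\kappa$ produces the required $Q^*$. Setting $\mathbb{Q}^* = (Q^*,<_Q) \cong \mathbb{Q}_\kappa$, every $m$-tuple from $Q^*$ has a $c$-color depending only on its strong similarity type, yielding both assertions of the theorem. The main obstacle is the gap between Shelah's theorem, which colors $m$-element antichains, and the problem at hand, which concerns arbitrary $m$-subsets that may contain comparable nodes; this is resolved by the partition-by-similarity-type reduction, which encodes each type canonically as an antichain of size $m_\tau \le m$ plus a fixed meet pattern, bringing each case within the scope of Theorem \ref{thm.Shelah}. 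A secondary technical point is verifying that the strong subtree obtained after finitely many successive applications of Shelah's theorem is rich enough in the $<_Q$-order to contain a copy of $\mathbb{Q}_\kappa$, which ultimately reduces to the fact that strong subtrees of $2^{<\kappa}$ inherit the full branching structure.
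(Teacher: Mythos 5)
The paper states this result only as a citation to \Dzamonja, Larson, and Mitchell and gives no proof, but it does indicate the intended strategy: run Devlin's argument with Shelah's Theorem \ref{thm.Shelah} in place of Milliken's theorem. Your overall plan matches that strategy, but there is a genuine gap in your reduction of arbitrary $m$-subsets to antichains. Theorem \ref{thm.Shelah} only colors $m$-element \emph{antichains} of $2^{<\kappa}$, whereas an element of $[\mathbb{Q}_{\kappa}]^m$, under the identification of $Q$ with $2^{<\kappa}$, may contain comparable nodes. Your claim that the $m$-subsets of a fixed strong similarity type $\tau$ are ``in bijection with $m_\tau$-element antichains augmented by the fixed meet pattern of $\tau$'' is false: already for $m=2$ and a comparable pair $s\subset t$, the antichain of maximal nodes is the singleton $\{t\}$, and the lower node $s$ is not recoverable from $t$ together with any fixed finite pattern --- there are $|t|$ many choices of $s$ below $t$, with varying relative lengths and passing numbers, giving infinitely many $2$-subsets with the same ``antichain part.'' So the induced colorings of $m_\tau$-element antichains you describe do not exist, and the colors of $m$-subsets containing comparable nodes are never controlled by any application of Theorem \ref{thm.Shelah}. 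Since nothing in your final back-and-forth construction prevents $Q^*$ from containing comparable nodes, the conclusion can fail for those $m$-subsets of $Q^*$.

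The standard repair --- the same move the paper describes in Sauer's proof for the Rado graph (passing to a strongly diagonal tree $\bD$) and which is implicit in Devlin's theorem --- is to choose $Q^*$ to be an \emph{antichain} of $2^{<\kappa}$ that is still strongly $\kappa$-dense under $<_Q$, for instance a diagonal set with one node per level along a cofinal set of levels of the strong subtree produced by Theorem \ref{thm.Shelah}. Then every $m$-subset of $Q^*$ genuinely is an $m$-element antichain, a single application of Theorem \ref{thm.Shelah} already gives one color per $\prec$-similarity class simultaneously (so your iteration over the finitely many types is unnecessary), and the finite count of antichain similarity types gives the bound. Two smaller points: the $\prec$ in Theorem \ref{thm.Shelah} is a well-ordering, not the dense order $<_Q$, so ``the well-ordering $\prec$ coming from $<_Q$'' must be replaced by an actual well-ordering chosen compatibly with the lexicographic structure so that $\prec$-similarity refines $<_Q$-similarity; and one must verify that the diagonal antichain can be found inside $e[\mathrm{Cone}(w)]$, which does follow from the strong subtree structure as you indicate at the end.
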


As each similarity type persists in every smaller copy of the $\kappa$-rationals,
 the strong similarity types provide  the exact finite big  Ramsey degree for
colorings of $[\mathbb{Q}_{\kappa}]^m$.

The {\em $\kappa$-Rado graph}, $\mathcal{R}_{\kappa}$,  is the random graph on $\kappa$ many vertices.
Similarly to  the coding of the Rado graph using nodes in the tree $2^{<\om}$ in Section \ref{sec.Sauer},
the $\kappa$-Rado graph can be coded using nodes in $2^{<\kappa}$.
Analogously to Sauer's use of Milliken's Theorem to prove that the Rado graph has finite big Ramsey degrees,
 Theorem   \ref{thm.Shelah}  was an integral part in the proof obtaining finite bounds for colorings of all copies of a given finite graph inside $\mathcal{R}_{\kappa}$.

\begin{thm}[\Dzamonja, Larson, and Mitchell,
 \cite{Dzamonja/Larson/MitchellRado09}]
 In any model of ZFC with a cardinal $\kappa$ which is measurable after adding $\beth_{\kappa+\om}$
many Cohen subsets of $\kappa$,
for any finite graph $\G$
there is a finite number $r^+_{\G}$ such that
for any coloring of the copies of $\G$ in $\mathcal{R}_{\kappa}$ into less than $\kappa$ many colors,
there is a subgraph $\mathcal{R}'_{\kappa}$ which is also a $\kappa$-Rado graph
in which the copies of $\G$ take on at most $r^+_{\G}$ many colors.
\end{thm}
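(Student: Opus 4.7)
The plan is to follow the strategy of Sauer's proof of Theorem \ref{thm.Sauer}, replacing Milliken's Theorem throughout by the uncountable analogue Theorem \ref{thm.Shelah}, and replacing $2^{<\om}$ by $2^{<\kappa}$. First I would fix a coding of $\mathcal{R}_{\kappa}$ by nodes in $2^{<\kappa}$ using the passing-number convention from Section \ref{sec.Sauer}: enumerate the vertices as $\lgl v_\al : \al < \kappa \rgl$ and choose nodes $t_\al \in 2^{<\kappa}$ with $|t_\al| > |t_\beta|$ for $\beta < \al$, so that $v_\al\, E\, v_\beta$ if and only if $t_\al(|t_\beta|) = 1$. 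A back-and-forth argument using universality of the graph coded by all of $2^{<\kappa}$ shows that we can arrange the $t_\al$ to code a copy of $\mathcal{R}_\kappa$ with coding nodes dense in $2^{<\kappa}$.

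Next I would transport the notion of \emph{strongly diagonal} subtree verbatim to $2^{<\kappa}$: a subtree $X$ that is the meet-closure of its maximal nodes, with at most one splitting or maximal node per level, and passing number $0$ at non-splitting levels not occupied by a maximal node. Given a finite graph $\G$, there are only finitely many strong similarity types of finite strongly diagonal trees whose maximal nodes, read in the order they are coded, form a copy of $\G$; this count depends only on $\G$, not on $\kappa$, and provides the candidate upper bound $r^+_{\G}$.

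Then I would run the envelope argument. For a strong similarity type $\tau$ with $k(\tau)$ critical levels, each copy of $\tau$ embeds into a unique minimal strong subtree of $2^{<\kappa}$ isomorphic to $2^{\le k(\tau)}$, and a $\sigma$-coloring of copies of $\G$ of type $\tau$ with $\sigma < \kappa$ pulls back to a coloring of $m(\tau)$-element antichains, with $m(\tau) = 2^{k(\tau)}$, endowed with a well-ordering $\prec$ recording the order in which the maximal nodes code their vertices. Fixing such a $\prec$ on the levels of $2^{<\kappa}$ and applying Theorem \ref{thm.Shelah} to this coloring yields a strong embedding $e : 2^{<\kappa} \ra 2^{<\kappa}$ and a node $w$ such that on $\mathrm{Cone}(w)$ the pulled-back coloring depends only on the $\prec$-similarity type of the antichain, which is determined by $\tau$. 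Iterating finitely many times, once per similarity type of tree coding $\G$, composing the resulting strong embeddings and intersecting the cones, produces a single strong subtree $S \sse 2^{<\kappa}$ on which the coloring of copies of $\G$ uses at most $r^+_{\G}$ colors, one per strong similarity type. The choice of $\beth_{\kappa+\om}$ in the hypothesis ensures the partition relation needed by Theorem \ref{thm.Shelah} holds simultaneously for all the finite arities $2m(\tau)$ that appear.

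Finally I would extract $\mathcal{R}'_{\kappa}$ from $S$. Since $S$ itself need not code $\mathcal{R}_\kappa$, I construct an infinite strongly diagonal subtree $\bD \sse S$ of size $\kappa$ whose maximal nodes code a copy of the $\kappa$-Rado graph, by a transfinite recursion of length $\kappa$ realizing the finite extension axioms for $\mathcal{R}_\kappa$ inside $S$. Every finite subtree of $\bD$ is automatically strongly diagonal, so each copy of $\G$ in the resulting $\mathcal{R}'_{\kappa}$ has its strong similarity type already assigned a single color on $S$, proving the bound. The main obstacle is the bookkeeping in the iterated application of Theorem \ref{thm.Shelah}: one must verify that the fixed well-ordering $\prec$ and the side information distinguishing similarity types (including the passing-number conditions that encode edges) are preserved through each iteration and through the final extraction of $\bD$; this uses, at each stage below $\kappa$, the abundance of splitting inside any strong subtree of $2^{<\kappa}$ to realize prescribed extension axioms of $\mathcal{R}_\kappa$ while staying strongly diagonal.
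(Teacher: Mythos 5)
Your proposal is essentially the approach this survey attributes to D\v{z}amonja, Larson, and Mitchell: the paper itself gives no proof of this theorem, stating only that it is obtained ``analogously to Sauer's use of Milliken's Theorem,'' with Theorem \ref{thm.Shelah} in place of Milliken and with the well-ordering $\prec$ absorbed into the notion of strong similarity type (which is exactly why $r^+_{\G}$ exceeds the countable Ramsey degree), all of which your sketch reproduces. The only simplification you miss is that the D\v{z}amonja--Larson--Mitchell strengthening of Shelah's theorem colors arbitrary $m$-element antichains directly, so one can color the $|\G|$-element antichains of coding nodes and apply Theorem \ref{thm.Shelah} once per antichain size rather than detouring through $2^{k(\tau)}$-element envelopes and iterating over similarity types; your longer route still works.
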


The number $r^+_{\G}$ is the number of strong similarity types of subtrees of $2^{<\kappa}$ which code a copy of $\G$; recall that  in the uncountable context, the fixed linear ordering  on the nodes in $2^{<\kappa}$ is a part of the description of strong similarity type.
\Dzamonja, Larson, and Mitchell  showed that for any graph $\G$ with more than two vertices,  this number $r^+_{\G}$ is strictly greater than the
Ramsey degree for the same graph $\G$ inside the Rado graph.
They conclude \cite{Dzamonja/Larson/MitchellRado09} with the following question.

\begin{question}[\cite{Dzamonja/Larson/MitchellRado09}]
What is the large cardinal strength of the conclusion of Theorem \ref{thm.Shelah}?
\end{question}

The Halpern-\Lauchli\ Theorem for more than one tree can also be extended to uncountable height trees.
Hathaway and the author considered in  \cite{Dobrinen/Hathaway16}
direct analogues
 of the Halpern-\Lauchli\ Theorem to more than one  tree of uncountable height, also considering trees with infinite branching.
A tree   $T \subseteq {^{<\kappa}\kappa}$
is a {\em $\kappa$-tree} if $T$ has cardinality $\kappa$ and every level of $T$ has cardinality less than $\kappa$.
We shall say that a tree
 $T \subseteq {^{<\kappa}\kappa}$ is
 {\em regular} if it is a perfect
 $\kappa$-tree in which every maximal branch has
 cofinality $\kappa$.
For  $\zeta < \kappa$,
let
 $T(\zeta) = T \cap {^\zeta \kappa}$.

Given a regular tree $T \subseteq {^{<\kappa} \kappa}$,
 tree $S \subseteq T$
 is called a {\em strong subtree} of $T$ if $S$ is regular and there is some set of levels
 $L \subseteq \kappa$ cofinal in $\kappa$
such that for each $s\in S$,
\begin{enumerate}
\item
 $s$ splits if and only if $t$ has length $\zeta\in L$, and
\item
for each $\zeta\in L$ and $s\in S(\zeta)$,
$s$ is  maximally branching in $T$.
\end{enumerate}

\begin{defn}
Let $\delta, \sigma > 0$ be ordinals
 and $\kappa$  be an infinite cardinal.
 $\textrm{HL}(\delta,\sigma,\kappa)$
 is the following statement:
Given any sequence
 $\langle T_i \subseteq {^{<\kappa}\kappa} :
 i < \delta \rangle$
 of regular trees and a coloring
 $$
c:\bigcup_{\zeta<\kappa}\prod_{i<\delta} T_i(\zeta)\ra\sigma,
$$
 there exists a sequence of trees
 $\langle S_i : i < \delta \rangle$ and $L\in[\kappa]^{\kappa}$
 such that
\begin{enumerate}
\item
 each $S_i$ is a strong subtree of $T_i$
 as witnessed by  $L\subseteq \kappa$,
 and
\item
 there is some $\sigma' < \sigma$
  such that
 $c$ has color $\sigma'$ on  $\bigcup_{\zeta\in L}\prod_{i<\delta} S_i(\zeta)$.
\end{enumerate}
\end{defn}

\begin{thm}[Dobrinen and Hathaway, \cite{Dobrinen/Hathaway16}]\label{thm.DH}
Let $d\ge 1$ be any finite integer and suppose that $\kappa$ is a $(\kappa+d)$-strong cardinal in a model $V$ of ZFC satisfying GCH.
Then there is a forcing extension in which $\kappa$ remains measurable and HL$(d,\sigma,\kappa)$ holds, for all $\sigma<\kappa$.
\end{thm}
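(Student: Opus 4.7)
The plan is to lift Harrington's forcing proof of Theorem~\ref{thm.matrixHL} to height $\kappa$, replacing the cardinal supplied by the \Erdos-Rado theorem with one manufactured in a generic extension using the $(\kappa+d)$-strong cardinal. The argument splits into a preparation phase, which arranges measurability together with a sufficiently strong partition relation at $\kappa$, and a main phase, which mimics Harrington's argument for $d$ trees of height $\kappa$.

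\textbf{Preparation.} First I would choose $\lambda$ large enough and force over $V$ with $\bQ=\mathrm{Add}(\kappa,\lambda)$, the Cohen forcing adding $\lambda$ many subsets of $\kappa$. Using the elementary embedding $j:V\to M$ witnessing $(\kappa+d)$-strongness of $\kappa$, together with GCH in $V$, I would lift $j$ through $\bQ$ by a standard master condition / term-forcing argument in the style of Woodin and of Cummings--Foreman, so that in $V[\bQ]$ the cardinal $\kappa$ remains measurable. The same lift, combined with iterated partition properties of the $\kappa$-complete measure, yields the partition relation $\lambda\ra(\kappa^+)^{2d}_\sigma$ for every $\sigma<\kappa$; this is the $\kappa$-analogue of the instance of \Erdos-Rado used in the proof of Theorem~\ref{thm.matrixHL}, and the exponent $2d$ is precisely what forces the hypothesis to be $(\kappa+d)$-strong rather than mere measurability.

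\textbf{Main argument.} Working in $V[\bQ]$, fix regular trees $T_i\sse {}^{<\kappa}\kappa$, $i<d$, and a coloring $c:\bigcup_{\zeta<\kappa}\prod_{i<d}T_i(\zeta)\ra\sigma$. Define a forcing $\bP$ of Harrington type whose conditions are functions $p:d\times\vec\delta_p\ra\bigcup_{i<d}T_i(\ell_p)$ with $\vec\delta_p\in[\lambda]^{<\om}$ and $\ell_p<\kappa$, homogenized at the single level $\ell_p$ and ordered by coordinatewise extension; this adds a generic branch $\dot b_{i,\al}$ through each $T_i$ for every $\al<\lambda$. Let $\dot{\cal U}$ be a $\bP$-name for a non-principal normal ultrafilter on $\kappa$, available by the preserved measurability. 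For each $\vec\al\in[\lambda]^d$ pick $p_{\vec\al}\in\bP$ with $\vec\al\sse\vec\delta_{p_{\vec\al}}$ deciding a color $\varepsilon_{\vec\al}<\sigma$ so that $p_{\vec\al}\forces$ "$c(\dot b_{\vec\al}\re \ell)=\varepsilon_{\vec\al}$ for $\dot{\cal U}$-many $\ell$" and $c(\{p_{\vec\al}(i,\al_i):i<d\})=\varepsilon_{\vec\al}$. Then define a coloring of $[\lambda]^{2d}$ encoding, exactly as in equation~(\ref{eq.fiotatheta}), the color $\varepsilon_{\vec\al}$, the images $p_{\vec\al}(i,\delta_{\vec\al}(j))$, the positions of $\al_i$ in $\vec\delta_{\vec\al}$, and the overlap pattern between $\vec\delta_{\vec\al}$ and $\vec\delta_{\vec\beta}$. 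Apply the partition relation from the preparation phase to obtain a homogeneous $K\sse\lambda$ of size $\kappa^+$, and from it extract pairwise disjoint $K'_0<\dots<K'_{d-1}$, each of size $\kappa$, with a member of $K$ strictly between any two consecutive elements of $K'$.

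\textbf{Building the strong subtrees.} Analogues of Claim~\ref{claim.onetypes} and Claim~\ref{claim.j=j'} give a common color $\varepsilon^*$, common nodes $t^*_i$ with $p_{\vec\al}(i,\al_i)=t^*_i$ for all $\vec\al\in\prod_{i<d}K'_i$, and pairwise compatibility of $\{p_{\vec\al}:\vec\al\in\prod_{i<d}K'_i\}$. Now recursively construct levels $\ell_\eta<\kappa$, for $\eta<\kappa$, and finite strong approximations $S_i\!\re\!(\ell_\eta+1)$: at a successor stage, amalgamate finitely many of the $p_{\vec\al}$ into a single condition $q$ using compatibility, then take $r\le q$ deciding a new common level $\ell_{\eta+1}$ on which $c(\dot b_{\vec\al}\re\ell_{\eta+1})=\varepsilon^*$ for all relevant $\vec\al$, and extend each $q(i,\cdot)$ to $r(i,\cdot)$; at a limit stage use continuity of the construction and the $\kappa$-directedness ensured by the preparation to extend. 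Finally set $S_i=\bigcup_\eta S_i(\ell_\eta)$ and $L=\{\ell_\eta:\eta<\kappa\}$.

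\textbf{Main obstacle.} The technical heart is the preparation phase: arranging that after adding enough Cohen subsets of $\kappa$ to secure $\lambda\ra(\kappa^+)^{2d}_\sigma$, the cardinal $\kappa$ is still measurable. Shelah's one-tree theorem needs only the exponent $2$, which survives mere measurability; moving from one tree to $d$ trees forces the exponent up to $2d$ in the Harrington-style homogenization, and that is exactly the strength encoded by $(\kappa+d)$-strongness, which must be delicately calibrated with the choice of $\lambda$ and the master-condition lift of $j$. Once the preparation is in place, the transfer of Harrington's argument is conceptually direct, with the only substantive new bookkeeping arising at limit stages of cofinality $<\kappa$, where one must verify that the amalgamated conditions and the accumulated levels remain coherent inside a regular subtree of $2^{<\kappa}$.
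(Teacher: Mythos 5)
There is a genuine gap, and it sits exactly where you locate the ``technical heart'' of your argument: the preparation phase as you describe it cannot be carried out from a $(\kappa+d)$-strong cardinal. Your main argument is a direct lifting of Harrington's proof of Theorem \ref{thm.matrixHL}, and like Harrington's proof it consumes a partition relation with exponent $2d$ (the homogenization is over pairs of $d$-tuples of generic branches, hence $2d$-element subsets of $\lambda$). Under GCH the \Erdos--Rado-type relation $\lambda\ra(\kappa^+)^{2d}_{\sigma}$ for $\sigma<\kappa$ first holds at $\lambda=\kappa^{+2d}$, and --- as is noted in Section \ref{sec.measurable} --- keeping $\kappa$ measurable after adding $\kappa^{+2d}$ many Cohen subsets of $\kappa$ already requires a $(\kappa+2d)$-strong cardinal. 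So no master-condition lift of the embedding witnessing mere $(\kappa+d)$-strongness can deliver both measurability of $\kappa$ and $\lambda\ra(\kappa^+)^{2d}_{\sigma}$ in $V[\bQ]$; your parenthetical claim that ``the exponent $2d$ is precisely what forces the hypothesis to be $(\kappa+d)$-strong'' is off by a factor of two. What you have written is a correct-in-outline proof of the theorem from a $(\kappa+2d)$-strong cardinal, which is the weaker result explicitly contrasted with Theorem \ref{thm.DH} in the text.

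To get down to $(\kappa+d)$-strong one cannot run Harrington's homogenization verbatim. The actual proof replaces it with the Farah--Todorcevic modification of Harrington's argument, which needs only a partition relation with exponent $d$ (satisfied at the level of $\beth_{d-1}^+$ in the countable case, and at $\lambda=\kappa^{+d}$ under GCH here), and then uses the measurability of $\kappa$ in the extension to substitute for the steps of the Farah--Todorcevic argument that do not lift directly to uncountable height. Concretely: instead of coloring $2d$-element sets to simultaneously homogenize the pair structure of two conditions $p_{\vec\al},p_{\vec\beta}$, one colors $d$-element sets into few colors and recovers the compatibility and common-node claims (your analogues of Claims \ref{claim.onetypes} and \ref{claim.j=j'}) by a different route that exploits the normal measure on $\kappa$. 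If you want to salvage your write-up at the stated hypothesis, this is the substitution you need to make in both the preparation phase (target $\lambda\ra(\kappa^+)^{d}_{2}$-type relations rather than exponent $2d$) and in the combinatorial core of the main argument.
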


A direct lifting of the proof in
Theorem \ref{thm.matrixHL} would yield Theorem \ref{thm.DH}, but at the expense of assuming a $(\kappa +2d)$-strong cardinal.
In order to bring the large cardinal strength  down
to a $(\kappa+d)$-strong cardinal,
Hathaway and the author combined the method of proof in  Theorem \ref{thm.matrixHL}
 with ideas from the proof  of the Halpern-\Lauchli\ Theorem in \cite{Farah/TodorcevicBK}, using  the measurability  of  $\kappa$ where the methods in
\cite{Farah/TodorcevicBK}
would not directly lift.
It is known that a $(\kappa+d)$-strong
cardinal is necessary to obtain a model of ZFC in which $\kappa$ is measurable after adding $\kappa^{+d}$ many new Cohen subsets of $\kappa$.
Thus, it is intriguing as to whether or not this is the actual consistency strength of the  Halpern-\Lauchli\ Theorem for $d$ trees at a measurable cardinal, or if there is some other means for obtaining HL$(d,\sigma,\kappa)$ at a measurable cardinal $\kappa$.

\begin{problem}[\cite{Dobrinen/Hathaway16}]\label{problem.constrength}
Find the exact consistency strength of HL$(d,\sigma,\kappa)$ for $\kappa$ a measurable cardinal, $d$ a positive integer, and $\sigma<\kappa$.
\end{problem}

We remark that a model of ZFC  with a strongly inaccessible cardinal $\kappa$ where HL$(d,2,\kappa)$
 fails for some $d\ge 1$  is not known at this time.
We also point out that the use of $\delta$ in the definition of  HL$(\delta,\sigma,\kappa)$, rather than just $d$,
 is in reference to the fact that a
theorem
similar to Theorem \ref{thm.DH}  is proved in \cite{Dobrinen/Hathaway16} for infinitely many trees, a result which is of interest in choiceless models of ZF.
It is open whether there is a model of ZF in which the measurable cardinal remains measurable after adding the amount of Cohen subsets of $\kappa$.
See \cite{Dobrinen/Hathaway16}  for this and other open problems.

Various weaker forms of the Halpern-\Lauchli\ Theorem were also investigated in \cite{Dobrinen/Hathaway16}.
The  {\em somewhere dense Halpern-\Lauchli\ Theorem},  SDHL$(\delta,\sigma,\kappa)$,
 is the version where it is only  required to find  levels $l<l'<\kappa$,  nodes $t_i\in T_i(l)$,
and sets $S_i\sse T_i(l')$ such that each immediate successor of $t_i$ in $T_i$ is extended by a unique member of $S_i$,
and all sequences in $\prod_{i<\delta} S_i$ have the same color.

\begin{thm}[Dobrinen and Hathaway, \cite{Dobrinen/Hathaway16}]
For a weakly compact cardinal $\kappa$ and ordinals $0<\delta,\sigma<\kappa$,
SDHL$(\delta,\sigma,\kappa)$ holds if and only if
HL$(\delta,\sigma,\kappa)$.
\end{thm}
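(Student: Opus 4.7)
The direction HL$(\delta,\sigma,\kappa)$ $\Rightarrow$ SDHL$(\delta,\sigma,\kappa)$ requires no large-cardinal assumption. Given strong subtrees $\langle S_i : i<\delta\rangle$ of $\langle T_i\rangle$ sharing a cofinal common splitting set $L \sse \kappa$ on which $c$ takes the constant value $\sigma'$, I would pick any two consecutive levels $\zeta < \zeta'$ of $L$ and any tuple $(t_i)_{i<\delta}$ with $t_i \in S_i(\zeta)$. Setting $S_i^* = S_i(\zeta') \cap \mathrm{Cone}(t_i)$, the strong-subtree structure forces each immediate successor of $t_i$ in $T_i$ to have exactly one extension in $S_i^*$ (since $S_i$ branches maximally at $\zeta$ but does not split again before $\zeta'$), and $\prod_{i<\delta} S_i^*$ inherits the constant color $\sigma'$ from $\prod_{i<\delta} S_i(\zeta')$, witnessing SDHL.

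For the converse SDHL$(\delta,\sigma,\kappa)$ $\Rightarrow$ HL$(\delta,\sigma,\kappa)$, the plan is a transfinite recursion of length $\kappa$ followed by a partition-theoretic extraction. By recursion on $\alpha < \kappa$ I would choose a strictly increasing sequence of levels $\zeta_\alpha$ and sets $U_{i,\alpha} \sse T_i(\zeta_\alpha)$ so that for every $\alpha$ each immediate successor in $T_i$ of each node in $U_{i,\alpha}$ is uniquely extended in $U_{i,\alpha+1}$, and so that $c$ is constant with value $\chi_\alpha \in \sigma$ on the full product $\prod_{i<\delta}U_{i,\alpha+1}$. At a successor step, strong inaccessibility of $\kappa$ gives $\lambda_\alpha := |\prod_{i<\delta} U_{i,\alpha}| < \kappa$, so I would enumerate the tuples in this product and iterate SDHL in the cones above the successive tuples, at each sub-step lifting all previously committed extensions to a common height in the ambient trees and then applying SDHL once more in the next system of cones to obtain a fresh monochromatic slab; after $\lambda_\alpha$ sub-steps one reads off $U_{i,\alpha+1}$ at some level $\zeta_{\alpha+1}$. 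Limit stages $\alpha$ are handled by setting $\zeta_\alpha = \sup_{\beta<\alpha} \zeta_\beta$ and letting $U_{i,\alpha}$ consist of those nodes at level $\zeta_\alpha$ of $T_i$ that are limits of a branch passing through every earlier $U_{i,\beta}$; regularity of $T_i$ and strong inaccessibility of $\kappa$ keep this set nonempty and of cardinality $<\kappa$.

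After the tower of height $\kappa$ is built, the sequence $\langle \chi_\alpha : \alpha < \kappa\rangle$ of colors lies in $\sigma<\kappa$, and regularity of $\kappa$ supplies a fixed color $\chi^*$ and a cofinal $A \sse \kappa$ on which $\chi_\alpha = \chi^*$. Thinning the strong subtree assembled from the tower by selecting a single immediate successor at each splitting level $\zeta_\alpha$ with $\alpha \notin A$ yields strong subtrees $S_i \sse T_i$ whose common splitting level set is $L = \{\zeta_\alpha : \alpha \in A\}$ and on which $c$ is constantly $\chi^*$, witnessing HL. The central obstacle, and the step where weak compactness is genuinely used rather than mere strong inaccessibility, is the within-stage fusion that forces $c$ to be constant on the full product $\prod_{i<\delta}U_{i,\alpha+1}$: the successive SDHL applications across the $\lambda_\alpha$ many tuples return monochromatic extensions with possibly different colors, and reconciling them requires an internal partition argument using the relation $\kappa \ra (\kappa)^n_\sigma$ for all $n<\om$, which characterizes weakly compact $\kappa$ and allows one to extract a uniform stage-$\alpha$ color from the sequence of colors produced by the inner iteration.
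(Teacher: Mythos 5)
This survey states the theorem without proof (it is quoted from \cite{Dobrinen/Hathaway16}), so your argument has to stand on its own. Your forward direction HL $\Rightarrow$ SDHL is correct and is indeed the trivial half: two consecutive levels of $L$ and one node per tree give a somewhere dense monochromatic grid. The converse, however, has a genuine gap at the successor step of your recursion, and it sits exactly where the real work of the theorem lies.

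The problem is your within-stage fusion. The coloring $c$ colors each level product $\prod_{i<\delta}T_i(\zeta)$ independently, so the monochromaticity that SDHL hands you at some level $l'$ says nothing about the colors of the tuples obtained by lifting the chosen nodes to any higher level. Hence when you ``lift all previously committed extensions to a common height'' before treating the next base tuple, every slab produced at an earlier sub-step loses its guarantee; after $\lambda_\alpha$ sub-steps only the final slab is controlled at the final level $\zeta_{\alpha+1}$. This cannot be repaired by re-processing, because distinct base tuples of $\prod_{i<\delta}U_{i,\alpha}$ share coordinates, so later sub-steps necessarily re-extend nodes that belong to earlier slabs. Your proposed remedy --- applying $\kappa\ra(\kappa)^n_\sigma$ to uniformize the slab colors --- targets the wrong obstruction: the issue is not that the slabs carry different colors (a disagreement of colors across stages is already absorbed by your final pigeonhole on $\langle\chi_\alpha\rangle$), but that after lifting, the slabs are not monochromatic at the common level at all, and it is unclear what $n$-element sets your partition relation is even supposed to color. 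Arranging that the \emph{entire} product $\prod_{i<\delta}U_{i,\alpha+1}$ is monochromatic at a single level is essentially an instance of the Halpern--\Lauchli\ statement one is trying to prove, so the step as written is circular. A secondary but real defect: SDHL produces a dense grid above \emph{some} tuple at \emph{some} level, not above a prescribed one; relativizing to cones only gives you a grid above a possibly deeper tuple, so you cannot force the maximal splitting of your strong subtrees to occur at the nodes of $U_{i,\alpha}$, and the $\delta$ trees need not split at a common level of your choosing. Closing these gaps is precisely where \cite{Dobrinen/Hathaway16} uses weak compactness globally (not merely as a local pigeonhole), and your outline does not yet contain that idea.
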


Very recently, Zhang extended Laver's Theorem \ref{thm.Laver}
to the uncountable setting.
First, he proved a strengthened version of Theorem \ref{thm.DH}, though using a stronger large cardinal hypothesis.
Then he applied it to prove the following.

\begin{thm}[Zhang, \cite{Zhang17}]\label{thm.Zhang}
Suppose that $\kappa$ is a cardinal which is measurable
 after forcing to add $\lambda$ many Cohen subsets of $\kappa$, where
$\lambda$ satisfies the partition relation
$\lambda\ra(\kappa)^{2d}_{2^{\kappa}}$.
Then for any coloring of the product of $d$ many copies of $\mathbb{Q}_{\kappa}$ into less than $\kappa$ many colors,
there are copies $Q_i$, $i<d$, of $\mathbb{Q}_{\kappa}$ such that the coloring takes on at most $(d+1)!$ colors on $\prod_{i<d}Q_i$.
Moreover, $(d+1)!$ is optimal.
\end{thm}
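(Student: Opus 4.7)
The plan is to transfer Laver's proof of Theorem~\ref{thm.Laver} to the uncountable setting, with a strengthened $\kappa$-analogue of the Halpern--\Lauchli\ Theorem for $d$ many $\kappa$-trees (built under the hypothesis $\lambda\ra(\kappa)^{2d}_{2^{\kappa}}$ of the theorem) replacing Milliken's Theorem for $2^{<\om}$ in the countable argument. As a preparatory step I would establish this stronger $\kappa$-Halpern--\Lauchli\ theorem by combining Shelah's antichain-type strengthening used in Theorem~\ref{thm.Shelah} with the multi-tree forcing argument underlying Theorem~\ref{thm.DH}; the partition relation $\lambda\ra(\kappa)^{2d}_{2^{\kappa}}$ plays exactly the role that the \Erdos--Rado Theorem plays in the proof of Theorem~\ref{thm.matrixHL}, applied after adding $\lambda$ many Cohen subsets of $\kappa$ to preserve measurability.

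Next I would identify each of the $d$ copies of $\mathbb{Q}_{\kappa}$ with $(2^{<\kappa},<_Q)$, where $<_Q$ is the natural $\kappa$-ary analogue of the order in Definition~\ref{def.Devlin}; this is the $\kappa$-version of the coding already used in the proof of Theorem~\ref{thm.DevlinThesis}. Under this identification any coloring $c\colon\prod_{i<d}\mathbb{Q}_{\kappa}\ra\sigma$, with $\sigma<\kappa$, becomes a coloring of tuples $(t_0,\ldots,t_{d-1})$ with $t_i$ ranging over a separate copy $2^{<\kappa}_i$ of $2^{<\kappa}$. I would then introduce a notion of \emph{strong similarity type} for such tuples, recording the total preorder of the lengths $|t_0|,\ldots,|t_{d-1}|$ together with the passing-number data $t_j(|t_i|)$ whenever $|t_i|\le|t_j|$, and verify by a direct combinatorial count that the number of similarity types realised by tuples corresponding to $d$ distinct points of $\mathbb{Q}_{\kappa}^d$ is exactly $(d+1)!$.

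Having these ingredients in place, I would apply the strengthened Halpern--\Lauchli\ theorem once for each similarity type (equivalently, once to a product coloring into $\sigma^{(d+1)!}$ many values, valid since $\sigma<\kappa$ and $\kappa$ is inaccessible), obtaining strong subtrees $S_i\sse 2^{<\kappa}_i$ witnessed by a single cofinal $L\sse\kappa$ such that $c$ is constant on the set of tuples of each fixed similarity type lying in $\prod_{i<d}S_i$. Each $S_i$, equipped with the induced $<_Q$-order, is again a copy $Q_i$ of $\mathbb{Q}_{\kappa}$, and by construction $c\re\prod_{i<d}Q_i$ takes at most $(d+1)!$ values. For the \emph{moreover} clause I would exhibit the concrete coloring that assigns to each tuple its own strong similarity type, and observe that since strong similarity types are preserved under passage to strong subtrees, and each of the $(d+1)!$ types is realised in every subproduct, no subproduct can avoid any of these colors, giving optimality.

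The principal obstacle, as in \cite{Dobrinen/Hathaway16}, is the first step: proving the strengthened $\kappa$-Halpern--\Lauchli\ theorem from $\lambda\ra(\kappa)^{2d}_{2^{\kappa}}$. The forcing set-up is a $\kappa$-ary analogue of the poset $\bP$ in the proof of Theorem~\ref{thm.matrixHL}, but the book-keeping is considerably more delicate: one must ensure that the "strong subtree" produced by the Erd\H{o}s--Rado-style homogenisation is still a regular subtree with $\kappa$-many branching levels cofinal in $\kappa$, and the compatibility argument (the analogue of Lemma~\ref{lem.compat}) must be carried out level by level along a transfinite recursion of length $\kappa$ through a name for a normal ultrafilter on $\kappa$, rather than an ultrafilter on $\om$. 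A secondary obstacle is the combinatorial verification that exactly $(d+1)!$ similarity types occur and that each is unavoidable; this is elementary but must be done carefully to justify both the upper and lower bounds simultaneously.
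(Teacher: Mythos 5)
Your outline matches the route the paper attributes to Zhang: first prove a strengthened similarity-type version of the multi-tree Halpern--\Lauchli\ theorem at $\kappa$ (a strengthening of Theorem~\ref{thm.DH}, under the stronger hypothesis $\lambda\ra(\kappa)^{2d}_{2^{\kappa}}$), then run Laver's argument via the $<_Q$ coding of $\mathbb{Q}_{\kappa}$ by $2^{<\kappa}$ and the count of $(d+1)!$ similarity types, with the concrete type-coloring giving optimality. The paper itself states this result as a citation without proof, and its two-sentence description of Zhang's method is exactly the approach you propose, so there is nothing further to compare.
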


As discussed previously, it is consistent with a $(\kappa +2d)$-strong cardinal to have a model where $\kappa$ satisfies the hypothesis of Theorem
\ref{thm.Zhang}.
Zhang asked whether the conclusion of this theorem is a consequence of some large cardinal hypothesis.
The author asks the following possibly easier question.

\begin{question}
Is a $(\kappa+d)$-strong cardinal sufficient to produce a model in which
 the conclusion of Theorem \ref{thm.Zhang}
holds?
\end{question}

In  \cite{Dobrinen/Hathaway16}, it was  proved in ZFC that HL$(1,k,\kappa)$ holds for each positive integer $k$ and each weakly compact cardinal $\kappa$.
This was recently extended by Zhang in \cite{Zhang17}
to colorings into less than $\kappa$ many colors; moreover, he proved this for a stronger asymmetric version.
Furthermore, Zhang showed that relative to the existence of a measurable cardinal, it is possible to have HL$(1,\delta,\kappa)$ hold where $\kappa$ is a strongly inaccessible cardinal which is not weakly compact.
Still, the following basic question remains open.

\begin{question}
Is it true in ZFC that if $\kappa$ is strongly inaccessible, then HL$(1,\delta,\kappa)$ holds, for some (or all) $\delta$ with $2\le\delta<\kappa$?
\end{question}

Mapping out all the implications between the various forms of the  Halpern-\Lauchli\ Theorem
at uncountable cardinals, their applications to homogeneous relational structures, and  their relative consistency strengths
is an area ripe for further  exploration.

\bibliographystyle{amsplain}
\bibliography{references}

\end{document}